\def\RR{{\mathbb{R}}}
\def\CC{{\mathbb{C}}}
\numberwithin{equation}{section}
\newcommand{\spec}{\mathop{\rm spec} }
\newcommand{\dom}{\mathop{\rm dom} }
\newcommand{\Conv}{\mathop{\rm Conv} }
\newcommand{\argmin}{\mathop{\rm argmin} }
\newcommand{\diag}{\mathop{\rm diag} }
\newcommand{\trace}{\mathop{\rm tr} }
\newtheorem{Thm}{Theorem}[section]
\newtheorem{Prop}[Thm]{Proposition}
\newtheorem{Lem}[Thm]{Lemma}
\newtheorem{Cor}[Thm]{Corollary}
\newtheorem{Question}[Thm]{Question}
\theoremstyle{definition}
\newtheorem{Prob}[Thm]{Problem}
\newtheorem*{Clm}{Claim}
\newtheorem{Rem}[Thm]{Remark}
\begin{document}

\title{Gradient descent for unbounded convex functions \\ on Hadamard manifolds \\ and its applications to scaling problems\footnote{The conference version appeared as 
H. Hirai and K. Sakabe: Gradient descent for unbounded convex functions on Hadamard manifolds and its applications to scaling problems. 
{\em 65th IEEE Symposium on Foundations of Computer Science (FOCS 2024)}, pp.2387--2402}
}

\author{Hiroshi Hirai\thanks{Graduate School of Mathematics, Nagoya University, Nagoya, 464-8602, Japan. hirai.hiroshi@math.nagoya-u.ac.jp} \and Keiya Sakabe \thanks{Faculty of Physics, Ludwig-Maximilians-Universit\"{a}t M\"{u}nchen, Munich, Germany. keiya.sakabe@lmu.de}}

\maketitle

\begin{abstract}
    In this paper, we study the asymptotic behavior of continuous- and discrete-time gradient flows of 
    a ``lower-unbounded" convex function $f$ on a Hadamard manifold $M$, particularly,  
    their convergence properties to the boundary $M^{\infty}$ at infinity of $M$.
    We establish a duality theorem that the infimum of the gradient-norm $\|\nabla f(x)\|$ of $f$ 
    over $M$ 
    is equal to the supremum of the negative of the recession function $f^{\infty}$ 
    of $f$ over the boundary $M^{\infty}$, provided the infimum is positive.
    Further, 
    the infimum and the supremum are
    obtained by the limit of the gradient flow of $f$. 
     Our results feature convex-optimization ingredients of the moment-weight inequality 
     for reductive group actions by Georgoulas, Robbin, and Salamon, 
     and are applied to noncommutative optimization by B\"urgisser et al. FOCS 2019.
    We show that gradient descent of 
    the Kempf-Ness function for an unstable orbit
    converges to a destabilizing 1-parameter subgroup in the Hilbert-Mumford criterion, and 
    the associated moment-map sequence converges to the minimum-norm point of the moment polytope.
    We show further refinements for operator scaling---the left-right action 
    on a matrix tuple $A= (A_1,A_2,\ldots,A_N)$.
    We characterize the gradient-flow limit of operator scaling
    by a vector-space generalization of 
    the classical Dulmage-Mendelsohn decomposition of a bipartite graph.
    For a special case of $N = 2$, 
    we reveal that the limit determines 
    the Kronecker canonical form of a matrix pencil $s A_1+A_2$.
\end{abstract}

Keywords: Hadamard manifold, geodesically convex optimization, gradient flow, matrix scaling, geometric programming, 
Hilbert-Mumford criterion, Kempf-Ness theorem, 
moment polytope, 
operator scaling, Dulmage-Mendelsohn decomposition

MSC-classifications: 90C25, 53C35

OR/MS subject classifications: Programming/Nonlinear/Convex, Mathematics/Convexity

\section{Introduction}

In convex optimization, it is typically assumed that the objective function $f$ is bounded below. 
The performance of a minimization algorithm is evaluated 
by its convergence behavior to the minimum of $f$. 
This paper addresses the convergence behavior of minimization algorithms 
for a “lower-unbounded” convex function $f$, i.e., $\inf f(x) = - \infty$. 
This may look meaningless, because
the trajectory $x_i$ of an algorithm diverges to infinity, 
and $f(x_i)$ goes to $-\infty$.
The meta question of the paper is: 
\begin{center}
{\it What can we gain from such a divergent sequence?}
\end{center}

Let us formalize our setting and mention its background. 
Let $M$ be a Hadamard manifold---a simply-connected complete Riemannian manifold 
with nonpositive sectional curvature.
Let $f:M \to \RR$ be a (twice differentiable) geodesically convex function, that is, 
$f$ is convex along any geodesic.  
We consider the following unconstrained convex optimization problem on $M$:
\begin{equation}\label{eqn:problem}
{\rm inf.}\quad f(x) \quad \mbox{s.t.}\quad  x \in M, \quad \mbox{where $f$ can be lower-unbounded.} 
\end{equation}
Such a problem setting is significant in the recent progress 
on {\em operator scaling}~\cite{Gurvits2004} and generalizations; see \cite{AGLOW_STOC2018,BFGOWW_FOCS2018,BFGOWW_FOCS2019,GGOW_BrascampLieb,GGOW,GargOliveira2018,HiraiNieuwboerWalter2023FOCS}. 
In the classical {\em matrix scaling}~\cite{Sinkhorn1964}, 
the scalability is equivalent to the boundedness of (\ref{eqn:problem}) for some convex function $f$ in $\RR^n$. 
Further, it is also equivalent to the perfect-matching condition of the associated bipartite graph. 
Hayashi, Hirai, and Sakabe~\cite{HayashiHiraiSakabe} 
studied asymptotic behavior of the Sinkhorn algorithm for the unscalable (unbounded) case, 
and revealed  
that a combinatorial certificate ({\em Hall blocker}) of unscalability 
can be identified from divergent behavior of the Sinkhorn algorithm. 
Although a Hall blocker is easily obtained by network-flow algorithms, 
finding the corresponding certificate ({\em shrunk subspace}) 
for the operator scaling setting is possible but quite difficult; 
see \cite{HamadaHirai2021,IQS2017,IQS2018}.
Just recently,
Franks, Soma, and Goemans~\cite{FranksSomaGoemans_SODA2023} 
modified the {\em operator Sinkhorn algorithm}---an alternating minimization algorithm 
for some convex function on the Hadamard manifold 
of positive definite matrices---to obtain a shrunk subspace in polynomial time, 
although it is still rather complicated.
The matrix and operator scaling problems are generalized to
a class of convex optimization involving reductive group actions, 
called {\em noncommutative optimization}~\cite{BFGOWW_FOCS2019}, which asks
to minimize the {\em Kempf-Ness function} associated with an orbit of the action.
This is formulated as a convex optimization problem on a representative class of Hadamard manifolds---{\em symmetric spaces of nonpositive curvature}.
It is lower-unbounded if and only if the orbit is unstable, where 
a $1$-parameter subgroup ({\em destabilizing 1-PSG}) in the {\em Hilbert-Mumford criterion} 
is the unboundedness certificate that generalizes a Hall blocker and a shrunk subspace.
As mentioned in \cite{BFGOWW_FOCS2019}, it is a great challenge 
to design polynomial-time algorithms for several noncommutative optimization problems, 
such as (un)stability determination, moment-polytope membership, and orbit-closure intersection, which will bring fruitful applications to 
broader areas of mathematical sciences. 
Many of them involve (un)bounded determination of Kempf-Ness functions, 
though our current knowledge on such problems is limited.

Motivated by these considerations, we study minimization of lower-unbounded 
convex functions on Hadamard manifolds.
Even in the Euclidean setting $M = \RR^n$, 
there are few works (see e.g., \cite{Auslender1997,Obuchowska2004}) on such study.
We focus on asymptotic behavior of
the simplest algorithm---{\em gradient descent}.
Accompanied with this, 
we also consider its continuous version---{\em gradient flow}, that is,  
a trajectory produced by the differential equation $\dot x(t) = - \nabla f(x(t))$. 

The contributions and organization of this paper are summarized as follows.
We begin with a general study of the asymptotic behavior of the gradient flow/descent 
for an unbounded convex function $f$ on a Hadamard manifold $M$.
As in the Euclidean setting, the {\em recession function} ({\em asymptotic slope}) 
$f^{\infty}$ of $f$ (see \cite{Hirai_Hadamard2022,KLM2009JDG}) is a basic tool of analyzing unboundedness, which 
is a function defined on the {\em boundary $M^{\infty}$ at infinity of $M$}.
Intuitively, the boundary $M^{\infty}$ is the set of all {\em directions} $\xi$ 
from an arbitrary fixed point $x_0$, and $f^{\infty}(\xi)$ represents the slope of $f$ along the direction $\xi$ at infinity.
Then, Hadamard manifold $M$ admits compactification $M \cup M^{\infty}$, where 
the resulting topology is called the {\em cone topology}.
These notions and related manifold terminologies are summarized in Section~\ref{sec:preliminaries}.

We focus on convergence properties, with respect to the cone topology, 
of the gradient flow/descent for an unbounded convex function $f$. 
In Section~\ref{sec:gradientflow}, 
under a sufficient condition $\inf_{x \in M } \|\nabla f(x)\| > 0$ of unboundedness,
we establish in Theorem~\ref{thm:continuous_main} that
the gradient flow $x(t)$ converges to a point of boundary $M^{\infty}$ 
with providing the following min-max (inf-sup) relation: 
\begin{equation}\label{eqn:duality_intro}
\lim_{t \to \infty} \|\nabla f(x(t)) \| = \inf_{x \in M} \|\nabla f(x)\| 
= \sup_{\xi \in M^{\infty}} -f^{\infty}(\xi) = - f^{\infty}\left(\lim_{t \to \infty} x(t)\right). 
\end{equation}
The limit $\lim_{t \to \infty} x(t)$ is the unique minimizer of $f^{\infty}$ over $M^{\infty}$, and 
is a certificate of unboundedness.
Further, we also show in Theorem~\ref{thm:discrete_main} that the same result holds 
for the sequence $x_i$ produced by gradient descent applied to an $L$-smooth convex function $f$ 
with step-size $1/L$.
These are the core results of the paper that drive the subsequent arguments. 

Even in the Euclidean setting $M=\RR^n$, 
these convergence results on the gradient flow/descent seem new, and  
bring an interesting ramification (Theorem~\ref{thm:mnp}): 
both $\nabla f(x(t))$ and $\nabla f(x_i)$
converge to the minimum-norm point $p^*$ of 
the gradient space $\overline{\nabla f(\RR^n)}$ (that is convex).
This means that gradient descent is interpreted as 
a minimum-norm point algorithm in the gradient space. 
Other interesting connections and implications to {\em Hessian Riemannian gradient flow}~\cite{AlvarezBolteBrahic2004}, 
{\em mirror descent}~\cite{NemirovskyYudin},  
and geometric programming are also mentioned.

In Section~\ref{sec:applications}, we present applications.
In Section~\ref{subsec:norm_minimization}, we deal 
with the norm-minimization problem for a reductive group action on a complex vector/projective space.
As mentioned, this is the problem of minimizing the Kempf-Ness function $f_v$ associated with an orbit of the action. 
Then, gradient descent is essentially the {\em first-order algorithm} in \cite{BFGOWW_FOCS2019}. 
Applying our results, we show 
that for the unstable case
the trajectory of the first-order algorithm
converges, in cone topology, to the unique minimizer of $f_v^{\infty}$, 
that yields a destabilizing $1$-PSG in the Hilbert-Mumford criterion. 
Further, the spectrum of the moment-map ($=$ transported gradient of $f_v$) 
along the trajectory 
converges to the minimum-norm point of the moment polytope~$\varDelta_v$.
For the gradient-flow setting,  
we reveal the connection to 
the theory of the {\em moment-weight inequality} 
for reductive group actions,  
developed by Georgoulas, Robbin, and Salamon~\cite{GRS_Book} 
building upon the earlier work by Kempf, Kirwan, Mumford, and Ness in GIT
and the recent work by Chen and Sun~\cite[Section 4]{ChenSun2014} in $K$-stability.
Specifically, the weak duality $\|\nabla f(x)\|\geq - f^{\infty}(\xi)$ in (\ref{eqn:duality_intro}) 
becomes the moment-weight inequality, and the strong duality via the gradient flow 
can explain important parts of their theory.
It may be fair to say that our results in Section~\ref{sec:gradientflow} 
extract and discretize convex-optimization ingredients of their theory.

In Section~\ref{subsec:operatorscaling}, we focus on the {\em left-right action} 
$SL_n(\CC) \times SL_m(\CC) \ni (g,h) \mapsto gAh^{\dagger}$
on a matrix tuple $A= (A_1,A_2,\ldots,A_N)$, 
that corresponds to the operator scaling problem.
In this setting, the middle equality in (\ref{eqn:duality_intro}) 
is interpreted as a duality theorem for the scalability limitation (Theorem~\ref{thm:scalability_limit}),
which sharpens Gurvits' characterization in the inf-sup form.
We then study the limit of the gradient flow/descent for 
the Kempf-Ness function $(g,h) \mapsto \log \|gAh^{\dagger}\|$. 
Our focus is in the unscalable case, 
whereas the scalable case was studied in detail by Kwok, Lau, and Ramachandran~\cite{KwokLauRamchandran_SICOMP2021}.    
We show in Theorems~\ref{thm:mnp_Q_A} and \ref{thm:limit_in_P(V)} 
that the minimum-norm point of the moment polytope $\varDelta_A$ and 
the limit of the gradient flow/descent are characterized by a certain simultaneous block-triangularization 
of $A= (A_1,A_2,\ldots,A_N)$, which is a vector-space generalization of 
    the classical {\em Dulmage-Mendelsohn decomposition}~\cite{DulmageMendelsohn1958} 
    of a bipartite graph.
    More specifically, the sequence of (normalized) scaling tuples $g_kAh_k^{\dagger}/\|g_kAh_k^{\dagger}\|$ 
    along the gradient descent converges to
    a block-diagonal matrix modulo the left-right unitary group action, 
    where the block structure is determined by our generalized DM-decomposition.
This answers the gradient-descent variant of an open question 
by Garg and Oliveira~\cite[Section 6]{GargOliveira2018} 
for asking asymptotic behavior of the operator Sinkhorn algorithm for unscalable instances.
Finding this block structure itself is significant. 
We partially eliminate the unitary indeterminacy from $g_kAh_k^{\dagger}$, 
and exploit a convergent sequence to a coarse block-triangular structure (Theorem~\ref{thm:convergence_to_cDM}). 
This leads to a new construction of a shrunk subspace (certificate of unscalability) 
by gradient descent combined with the rounding procedure in Franks, Soma, and Goemans~\cite{FranksSomaGoemans_SODA2023}.    

    In Section~\ref{subsec:pencil}, for a special case of $N = 2$, 
    we reveal that our DM-decomposition of $(A_1,A_2)$ coarsens and determines 
    the well-known {\em Kronecker canonical form} of a matrix pencil $s A_1+A_2$.
    The Kronecker form plays important roles in systems analysis 
    by a differential-algebraic equation (DAE) $A_1 \dot u(t) + A_2u(t) = 0$.
    Its computation has been studied for a long time 
    in the literature of numerical computation; see e.g., \cite{DemmelKagstrom1993,VanDooren1979}. 
Our convergence result (Theorem~\ref{thm:convergence_to_cKronecker}) 
suggests a new iterative method for determining the Kronecker structure, 
which is based on simple gradient descent and is conceptually different from the existing ones.

These results may be positioned as attempts of detecting, by algorithms in $M$, 
hidden structures in the boundary $M^{\infty}$ at infinity,
which has been little studied so far. 
We hope that our attempts lead to more serious studies from computational complexity perspective. 
Particularly, it is an important future direction to 
improve the present convergence-type results to 
the ones having explicit iteration complexity.

After the submission of this paper, 
there have been several subsequent developments~\cite{Hirai2025entanglement,Hirai2025,Ohta2025,Sakabe,Sakabe2026supportfunctional}.

\section{Preliminaries}\label{sec:preliminaries}
Let $\RR$ and $\RR_+$ denote the sets of real and nonnegative real numbers, respectively.
We often add to $\RR$ and $\RR_+$ 
the infinity elements $\pm \infty$, 
where the topology and ordering $\leq$ 
are extended in the usual way.
Let $\CC$ denote the set of complex numbers $z = x+iy$, 
where $\bar z$ denotes the complex conjugate $x-iy$ and $Re z$ denotes the real part $x$.
The same notation is used for a complex vector $\zeta = u+iv \in \CC^n$ with $u,v \in \RR^n$ 
as $\bar \zeta = u - iv$.
For a matrix $A$ over $\CC$, let $A^{\dagger}$ 
denote the transpose conjugate. 
For sets $I$ and $J$ of row indices and column indices of $A$, 
let $A[I,J]$ denote the submatrix of $A$ with row indices in $I$ and column indices in $J$.
For two matrices $A,B$ (of possibly different sizes), 
let $A \oplus B$ denote the block-diagonal matrix 
of block-diagonals $A,B$ in order.
For a vector $p \in \RR^n$, let $\diag p$ denote
the $n \times n$ diagonal matrix with $(\diag p)_{ii} = p_i$.

The general linear group $GL(n,\CC)$ and the special linear group $SL(n,\CC)$ over $\CC$ are simply denoted by $GL_n$ and $SL_n$, respectively. 
The unitary group $U(n)$ and the special unitary group $SU(n)$ are denoted by $U_n$ and $SU_n$, respectively.
For a finite-dimensional vector space $V$ over $\CC$, 
let $GL(V)$ denote the group of linear isomorphisms on $V$.

For a positive integer $n$, let $[n] := \{1,2,\ldots,n\}$.
For $X \subseteq [n]$, let ${\bf 1}_X \in \RR^n$ be defined by $({\bf 1}_{X})_i = 1$ if $i \in X$ and $0$ otherwise, where ${\bf 1}_{[n]}$ is simply written as ${\bf 1}$.

A sequence $(x_i)_{i=0,1,\ldots,}$ and function $(x(t))_{t \in \RR_+}$ are simply 
denoted by $x_i$ and $x(t)$, respectively. 
For a real-valued sequence $a_i$ and continuous function $h(t)$, 
we will use several times the following:
\begin{eqnarray}
&&  \liminf_{i \to \infty} a_i \leq \liminf_{i \to \infty} \frac{1}{i} \sum_{j=1}^{i} a_j \leq   \limsup_{i \to \infty} \frac{1}{i}\sum_{j=1}^i a_j \leq \limsup_{i \to \infty} a_i, \label{eqn:exercise1}\\
&& \liminf_{t \to \infty} h(t) \leq \liminf_{t \to \infty} \frac{1}{t} \int_{0}^t h(s) ds \leq   \limsup_{t \to \infty} \frac{1}{t} \int_{0}^t h(s) ds \leq \limsup_{t \to \infty} h(t). \label{eqn:exercise2}
\end{eqnarray}
This is a little exercise in calculus. For example, 
the leftmost $\leq$ in (\ref{eqn:exercise2}) follows from: 
Suppose that $\alpha := \liminf_{t \to \infty} h(t) \in \RR$. 
Then $\forall \epsilon > 0$, $\exists N \geq 0$, $\forall t \geq N$, $h(t) \geq \alpha - \epsilon$, and hence $\forall t \geq N$, $\frac{1}{t}\int_0^t h(s)ds 
\geq \frac{1}{t}\int_0^{N} h(s)ds + \frac{t-N}{t}(\alpha - \epsilon) 
\underset{t \to \infty}{\longrightarrow} \alpha - \epsilon$. 
Since $\epsilon$ is arbitrary, we have $\liminf_{t \to \infty} \frac{1}{t} \int_{0}^t h(s) ds \geq \alpha$.
\subsection{Riemannian geometry}
We will utilize standard terminologies and notation on 
Riemannian geometry; see e.g., \cite{Sakai1996}. See also a recent book~\cite{Boumal_Book} 
for optimization perspective. 
We assume sufficient differentiability 
for manifolds, functions, maps, and vector/tensor fields on them.
Let $M$ be a Riemannian manifold. 
For $x \in M$, let $T_x = T_x(M)$ 
denote the tangent space of $M$ at $x$, where  
$\langle \cdot , \cdot \rangle = \langle \cdot , \cdot \rangle_x$ 
denotes the Riemannian metric at $x$ and $\| \cdot \| := \sqrt{\langle \cdot , \cdot \rangle}$ 
denotes the associated norm.
Let $S_x := \{u \in T_x \mid \|u\| = 1\}$ and  $B_x := \{u \in T_x \mid \|u\| \leq 1\}$ denote 
the unit sphere and ball in $T_x$, respectively.
The angle $\angle (u,v)$ of two vectors $u,v \in T_x$ is defined 
as $\cos^{-1}(\langle u,v \rangle/\|u\|\|v\|)$.
The product $M \times M'$ of two Riemannian manifolds $M,M'$ is viewed as a Riemannian manifold 
by setting $\langle (u,u'),(v,v') \rangle_{(x,x')} := \langle u,v \rangle_x + \langle u',v' \rangle_{x'}$.

For a path $\gamma:[a,b] \to M$ and $t \in [a,b]$,  
let $\dot \gamma(t)$ 
denote the tangent vector of $\gamma$ at $T_{\gamma(t)}$.
The length of the path $\gamma$ 
is defined by $\int_a^b \|\dot \gamma(t)\| dt$.
The distance $d(x,y)$ between $x,y  \in M$ 
is the infimum of the length of a path connecting $x$ and $y$. 
We consider the Levi-Civita connection $\nabla$ associated with the Riemannian metric.
The connection $\nabla$ determines the {\em parallel transport}  
$\tau_{\gamma}^t:T_{\gamma(0)} \to T_{\gamma(t)}$ 
along any path $\gamma:[0,b] \to M$ with $t \in [0,b]$, 
where $\tau_\gamma^{-t} := (\tau_\gamma^t)^{-1}$.
By using the parallel transport, 
the covariant derivative $\nabla_u V$ of
a vector field $V = (V_x)_{x \in M}$ by $u \in T_x$ is given by
$\nabla_u V := 
(d/dt) \tau^{-t}_{\gamma} V_{\gamma(t)} \mid_{t = 0}$, 
where $\gamma$ is a path with $\gamma(0) =x$ and $\dot \gamma(0) = u$.

In this paper, any manifold $M$ is assumed to be complete. 
That is, the metric space $(M,d)$ is complete. 
Then, the distance $d(x,y)$ is always attained 
by a {\em geodesic}---a path $\gamma:[a,b] \to M$ 
satisfying $\nabla_{\dot \gamma(t)}\dot \gamma = 0$ for $t \in [a,b]$.
By a {\em unit-speed geodesic ray} we mean a geodesic $\gamma:[0,\infty) \to M$ 
with $\|\dot \gamma(0)\| = 1$ (and then $\|\dot \gamma(t)\| = 1$ for all $t$).
For $x \in M$ and $u \in T_x$, 
there is a unique geodesic 
$\gamma (t)$ with $\gamma(0) = x$ and $\dot \gamma(0) = u$, denoted by $\exp_{x} tu$.
By completeness of $M$, the map $t \mapsto \exp_{x} tu$ is defined on $\RR_+$.
This gives rise to  a surjective map $\exp_x: T_x \to M$, called the {\em exponential map}.

For a map $\varphi: M \to N$, where $N$ is another manifold, 
let $d\varphi: T_x(M) \to T_{\varphi(x)}(N)$ denote the differential of $\varphi$ 
at $x \in M$. 
The differential $df = df_x : T_x \to \RR$ of a function $f: M \to \RR$ is given by
$df(u) = (d/dt) f(\gamma(t)) \mid_{t=0}$,
where $\gamma$ is a path with $\gamma(0) = x$ and $\dot \gamma(0) = u \in T_x$.
The gradient $\nabla f(x) \in T_x$ of $f$ is then defined via
\[
\langle \nabla f(x),u \rangle := df(u) \quad (u \in T_x). 
\]
The covariant differentiation of the gradient vector field $\nabla f$ 
gives rise to the {\em Hessian} $\nabla^2 f(x): T_x \to T_x$:
\begin{equation}\label{eqn:Hessian}
   \nabla^2 f(x) u := \nabla_u \nabla f(x) \quad (u \in T_x). 	
\end{equation}
The Hessian is a symmetric operator 
in the sense that $\langle \nabla^2 f(x) u,v \rangle = \langle \nabla^2 f(x) v,u \rangle$.

\paragraph{Complex projective space.} 
We will consider the complex projective space as a Riemannian manifold.
Let $V$ be an $n$-dimensional vector space over $\CC$.
The {\em complex projective space} $\mathbb{P}(V)$ over $V$ 
is a quotient manifold $V \setminus \{0\}/\sim$ by the equivalent relation 
$v \sim v' \Leftrightarrow v = \alpha v'$ $(\exists \alpha \in \CC \setminus \{0\})$.  
The image of $v$ by $V\setminus \{0\} \to \mathbb{P}(V)$ is denoted by $[v]$.
A Riemannian structure on $\mathbb{P}(V)$ is given by 
the {\em Fubini-Study form} as follows. 
Let $(\cdot,\cdot)$ be a Hermitian inner product on $V$.
Regard $V$ as a $2n$-dimensional Euclidean space by the real inner product $Re (\cdot,\cdot)$. 
This induces a Riemannian structure on the sphere $S^{2n-1} = \{v \in V \mid \|v\| = r\}$, where we set $r := \sqrt{2}$.
Further, $U_1 (= U(1))$ acts isometrically on $S^{2n-1}$ 
by scalar multiplication $U_1 \times S^{2n-1} \ni (e^{i\theta},v) \mapsto e^{i \theta}v$.
Then $\mathbb{P}(V)$ is viewed as the Riemannian quotient of $S^{2n-1}$ with respect to this action. 
The resulting metric on $\mathbb{P}(V)$ is called the {\em Fubini-Study} metric.
See e.g., \cite[Chapter 9]{Boumal_Book} for Riemannian quotient manifolds.

\subsection{Hadamard manifold}\label{subsec:Hadamard_manifold}

A {\em Hadamard manifold} $M$ is a simply-connected complete 
Riemannian manifold having nonpositive sectional curvature 
everywhere; see \cite[V.4]{Sakai1996}. For any two points in $M$, a geodesic connecting them is uniquely determined (up to affine rescaling). 
The exponential map $\exp_x$ 
is a diffeomorphism from $T_x$ to $M$.
The parallel transport from $T_x$ to $T_y$
along the geodesic is simply denoted by $\tau_{x \to y}$.

In this paper, the {\em boundary $M^{\infty}$ at infinity} and the {\em cone topology} on $M \cup M^{\infty}$ play particularly important roles;
see \cite[V.4.2]{Sakai1996} for a quick introduction to these notions. 
Two unit-speed geodesic rays $\gamma,\gamma':\RR_+ \to M$
are called {\em asymptotic} if $d(\gamma(t),\gamma'(t)) < C$ $(t \in \RR_+)$ for some constant $C>0$. The asymptotic relation is 
an equivalence relation on the set of all unit-speed geodesic rays.
Let $M^{\infty}$ denote the set of all equivalence classes. 
%
Let us fix an arbitrary point $x \in M$.
Any unit vector $u \in S_{x}$ defines  
an asymptotic class of unit-speed geodesic ray $t \mapsto \exp_{x} t u$.
This correspondence is a bijection between $S_{x}$ and $M^{\infty}$, and  
induces a topology on $M^{\infty}$ that is isomorphic to the sphere $S_{x}$. In fact, this topology is independent of the choice of $x$.
Further, the topologies on $M$ and on $M^{\infty}$ are 
extended to $M \cup M^{\infty}$ as follows.
Since $\exp_x$ is a diffeomorphism, 
it holds $M \simeq (S_x \times \RR_+)/\sim_0$, where $\sim_0$ is 
the equivalence relation defined by $(u,r)\sim_0 (u',r')$ $\Leftrightarrow$ 
$(u,r)= (u',r')$ or $r=r' = 0$.
With $M^{\infty} \simeq S_x \times \{\infty\}$,  
we obtain the compact Hausdorff space $M \cup M^{\infty} \simeq (S_x \times (\RR_+ \cup \{\infty\}))/\sim_0$ (isomorphic to $B_x$).
This topology on $M \cup M^{\infty}$ is called the {\em cone topology}.
In this topology, a sequence $x_i$ in $M$ converges to $\xi \in M^{\infty}$ if and only if
\begin{itemize}
\item $d(x,x_i) \to \infty$, and  
\item  the sequence $u_i$ in $S_x$  
determined by $x_i= \exp_x d(x,x_i)u_i$ converges to $u \in S_x$, where the asymptotic class of geodesic $t \mapsto \exp_x tu$ is equal to $\xi$. 
\end{itemize}

The {\em angle} $\angle^{\infty} (\xi,\xi')$ of two points $\xi,\xi' \in M^{\infty}$ is defined 
as $\sup_{x \in M} \angle(u,u')$, where $u$ and $u'$ are the 
representatives of $\xi$ and $\xi'$, respectively, at $T_x$.
The angle defines a metric on $M^{\infty}$, which induces a different topology.
By using the angle metric on $M^{\infty}$, we can define a metric $d^{\infty}$ on 
the {\em Euclidean cone} $CM^{\infty} := (M^{\infty} \times \RR_+)/\sim_0$ of the boundary $M^{\infty}$
by $d^{\infty}((\xi,r),(\xi',r'))^2 = r^2+(r')^2 - 2rr' \cos \angle^{\infty} (\xi,\xi')$. 
This space $CM^{\infty}$ is viewed as the space of asymptotic classes of (not necessarily unit-speed) 
geodesic rays. It is identified with $T_x$, 
though the metric space $(CM^{\infty}, d^{\infty})$ has 
a different topology from $T_x$ and is not necessarily a manifold.
This metric space $(CM^{\infty}, d^{\infty})$ is a {\em Hadamard space}---a complete geodesic metric space satisfying the {\em CAT(0)-inequality}~\cite{BridsonHaefliger1999}.
It is uniquely geodesic, and its convexity is defined along geodesics.
The unit ball $B^{\infty} = \{p \in CM^{\infty} \mid d^{\infty}(0,p) \leq 1\}$ around the origin $0$ is a convex set, 
where the origin $0$ is the image of point $(\xi,0)$. 
Observe that $B^{\infty}$ can be identified with $B_x$ for any $x \in M$.

\paragraph{Manifold of positive definite matrices and symmetric space.}
A representative example of a Hadamard manifold is
the space $P_n$ of $n\times n$ positive definite Hermitian matrices; see \cite[II.10]{BridsonHaefliger1999}. 
The tangent space $T_x$ at $x \in P_n$ is identified 
with the real vector space $\mathfrak{p}_n$ of Hermitian matrices, 
and the Riemannian metric   
is given by $\langle G,H\rangle_x := \trace x^{-1}Hx^{-1}G$.
In this space, 
several manifold notions are explicitly written; 
see e.g., \cite[Section 5.2]{HiraiNieuwboerWalter2023FOCS}.
The exponential map $\exp_x$ at $x$ is given 
by $H \mapsto x^{1/2}e^{x^{-1/2}Hx^{-1/2}}x^{1/2}$, where 
$e^{\bullet}$ is the matrix exponential.
Particularly, any geodesic issuing at $x$ is of form $t \mapsto x^{1/2}e^{tx^{-1/2}Hx^{-1/2}}x^{1/2}$ for some Hermitian matrix $H \in T_x$ with $\|H\| = \|x^{-1/2}Hx^{-1/2}\|_{\rm F} = 1$, where $\|\cdot \|_{\rm F}$ is the Frobenius norm.
An explicit formula of the geodesic parallel transport 
$\tau_{x \to y}$ is also known.
We will use one special case: $\tau_{x \to I}H = x^{-1/2}H x^{-1/2}$.

Any totally geodesic subspace $M$ of $P_n$ is also a Hadamard manifold.
Here, a submanifold $M \subseteq P_n$ is said to be 
{\em totally geodesic} if every geodesic in $M$ is also geodesic in $P_n$. 
It is known \cite[II.10.58]{BridsonHaefliger1999} that for a connected Lie group $G \subseteq GL_n$ defined by polynomials and satisfying $G =G^{\dagger}$, 
the submanifold $P_n \cap G$ is a totally geodesic subspace. 
Such a group $G$ is called {\em self-adjoint} (or {\em symmetric}), 
and is a {\rm reductive algebraic group}; 
see~\cite[Sections 2.2, 3.1.3, and 3.2]{Wallach_book}.  
Here $P_n \cap G$ is known as a {\em symmetric space} (of nonpositive curvature). 
A particular case we will face is: $G = SL_n$ and $P_n^1 := P_n \cap SL_n = \{x \in P_n \mid \det x =1\}$, 
where the tangent space $T_I(P_n^1)$ at $I$ is given by $\mathfrak{p}_n^1 := \{H \in \mathfrak{p}_n \mid \trace H = 0 \}$.
It is known \cite[II.10.71]{BridsonHaefliger1999} that the boundary $M^{\infty}$ at infinity of $M = P_n \cap G$ 
becomes a {\em spherical building}, and the associated Euclidean cone $CM^{\infty}$ 
becomes a {\em Euclidean building}.
We will consider convex functions 
on these spaces in Section~\ref{sec:applications}.

\subsection{Convex function}
In a Hadamard manifold $M$, by uniqueness of geodesics, 
convexity is naturally introduced. 
A function $f: M \to \RR$ is said to be {\em convex} if for every geodesic $\gamma:[a,b] \to M$ one-dimensional function 
$f\circ \gamma: [a,b] \to \RR$ is convex. 
We will assume twice differentiability for the smoothness of $f$.
Then the convexity condition is equivalent to $(f \circ \gamma)''(t) \geq 0$.
From $(f \circ \gamma)''(t) = (d/dt)\langle \nabla f(\gamma(t)),\dot \gamma(t) \rangle = \langle \nabla_{\dot \gamma(t)} \nabla f(\gamma(t)),\dot \gamma(t) \rangle$,  convexity of $f$ is equivalent to positive semidefiniteness of Hessian $\nabla^2 f(x)$:
\begin{equation*}
\langle \nabla^2 f(x)u,u \rangle \geq 0. 
\end{equation*}
for all $x \in M, u \in T_x$.
We also consider the Lipschitz condition for the gradient vector field $\nabla f$.
For $L \in \RR_+$, 
a function $f:M \to \RR$ is said to be {\em $L$-smooth} if
\[
\langle \nabla^2 f(x)u,u \rangle  \leq L \langle u,u \rangle
\]
for all $x \in M$, $u \in T_x$. That is,  
the operator norm $\| \nabla^2 f(x) \|$ is bounded by $L$ 
for all $x \in M$. 

We next introduce an important tool for studying the unboundedness of convex functions.  
Let us fix $x_0 \in M$. 
The {\em recession function (asymptotic slope)} $f^{\infty} = f_{x_0}^{\infty}: M^{\infty} \to \RR \cup \{\infty\}$~\cite{Hirai_Hadamard2022,KLM2009JDG} 
is defined by
\begin{eqnarray}
f_{x_0}^{\infty}(u) &:=& \lim_{s \to \infty} \frac{f(\exp_{x_0} su) - f(x_0)}{s} = \lim_{s \to \infty}\frac{f(\exp_{x_0} su)}{s}  \nonumber  \\
&=&  \lim_{s \to \infty} \frac{d}{ds} f(\exp_{x_0}su)
\quad (u \in S_{x_0} \simeq M^{\infty}), \label{eqn:recession_function}
\end{eqnarray}
where the limits exist in $\RR \cup \{\infty\}$ due to convexity of $f$ 
(monotonicity of $s \mapsto (f(\exp_{x_0} su) - f(x_0))/s$ and of $s \mapsto (d/ds)  f(\exp_{x_0}su)$) 
and the last equality follows from (\ref{eqn:exercise2}) for $h(t) := (d/dt)f(\exp_{x_0}tu)$.
It is shown~\cite[Lemma 2.10]{KleinerLeeb2006} that if $t \mapsto \exp_{x_0}t u$ 
and $t \mapsto \exp_{y_0}t v$ are asymptotic, then $f_{x_0}^{\infty}(u) = f_{y_0}^{\infty}(v)$.\footnote{
Proof sketch: Let $\alpha(t) := \exp_{x_0}tu$ and
$\beta(t) := \exp_{y_0}tv$, and 
define $u_t \in S_{x_0}$ by $\exp_{x_0}d(x_0,\beta(t)) u_t = \beta(t)$.
By convexity of $f$ along the geodesic between $x_0$ and $\beta(t)$, 
it holds $f(\exp_{x_0} s u_t) - f(x_0) \leq (s/d(x_0,\beta(t))) (f(\beta(t)) - f(x_0))$ for $s \in [0,d(x_0,\beta(t))]$.
By the triangle inequality,
we have $(f(\beta(t)) - f(x_0))/d(x_0,\beta(t)) 
\leq \max_{\sigma \in \{-1,1\}} (f(\beta(t)) - f(x_0))/(t + \sigma d(x_0,y_0)) \to f^{\infty}_{y_0}(v)$ 
for $t \to \infty$. 
By the CAT(0)-inequality on the geodesic triangle of vertices $x_0, \alpha(t), \beta(t)$ and 
by $d(\alpha(t),\beta(t))$ being bounded, 
it holds $\exp_{x_0} s u_t \to \alpha(s)$ for $t \to \infty$. 
Thus we have $f^{\infty}_{y_0}(v) \geq (f(\alpha(s)) - f(x_0))/s \underset{s \to \infty}{\to} f^{\infty}_{x_0}(u)$. 
By symmetry, it holds $f^{\infty}_{x_0}(u) \geq f^{\infty}_{y_0}(v)$, and hence $f^{\infty}_{x_0}(u) = f^{\infty}_{y_0}(v)$.
}
Hence, the recession function 
$f^{\infty}$ is regarded as $M^{\infty} \to \RR \cup \{\infty\}$.
Further, $f^{\infty}$ is naturally extended to 
$CM^{\infty} \to \RR \cup \{\infty\}$ 
by allowing $u$ to any vector in $T_{x_0} \simeq CM^{\infty}$.
If $M = \RR^n$, then $CM^{\infty} = \RR^n$ and $f^{\infty}$ matches the recession function in Euclidean convex analysis; see \cite[Section 8]{Rockafellar} and \cite[Section 3.2]{Hiriart-UrrutyLemarechal}.
As in the Euclidean case,  
the following properties hold:
\begin{eqnarray}
\inf_{\xi \in M^{\infty}}f^{\infty}(\xi) < 0 & \Rightarrow & \inf_{x \in M} f(x) = - \infty. \nonumber \\
\inf_{\xi \in M^{\infty}}f^{\infty}(\xi) > 0
& \Rightarrow & 
\exists x^* \in M:  f(x^*) = \inf_{x \in M} f(x). \label{eqn:exists}
\end{eqnarray}
The second property is included in \cite[Lemma 3.2 (vi)]{KLM2009JDG}.
Moreover,  
it is known \cite{Hirai_Hadamard2022} that $f^{\infty}$ is a positively homogeneous convex function 
on Hadamard space $CM^{\infty}$.

In particular, both $\inf_{\xi \in M^{\infty}}f^{\infty}(\xi) < 0$ and 
$\inf_{x \in M} \|\nabla f(x)\| > 0$ are sufficient conditions for unboundedness of $f$.
In fact, they are equivalent. 
\begin{Prop}[{\cite[Lemma 3.2 (iii), Lemma 3.4]{KLM2009JDG}; see also \cite{Hirai_Hadamard2022}}]\label{prop:f^infty} \  
\begin{itemize}
	\item[(1)]  $\displaystyle \inf_{\xi \in M^{\infty}}f^{\infty}(\xi) < 0$ if and only if 
 $\displaystyle \inf_{x \in M} \|\nabla f(x)\| > 0$.   
%
\item[(2)] If $\displaystyle \inf_{\xi \in M^{\infty}}f^{\infty}(\xi) < 0$, 
then there uniquely exists $\xi^* \in M^{\infty}$ with 
$\displaystyle f^{\infty}(\xi^*) = \inf_{\xi \in M^{\infty}}f^{\infty}(\xi)$.
\end{itemize}
\end{Prop}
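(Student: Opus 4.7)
My plan is to sandwich the two quantities $\inf_{x \in M}\|\nabla f(x)\|$ and $-\inf_{\xi \in M^{\infty}} f^{\infty}(\xi)$ by inequalities in both directions. One direction is an elementary weak-duality bound from convexity along geodesic rays; the reverse is a Moreau--Yosida-type penalty argument that produces an explicit direction in $M^{\infty}$ with strictly negative recession value from any positive lower bound on the gradient norm. Part~(2) then follows from lower semicontinuity, convexity of $f^{\infty}$ on the Hadamard cone $CM^{\infty}$, and positive homogeneity.

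For the weak-duality bound, I fix $x \in M$ and $\xi \in M^{\infty}$ and consider the unique unit-speed geodesic ray $\gamma$ from $x$ asymptotic to $\xi$. Convexity of $s \mapsto f(\gamma(s))$ makes the slope $(f(\gamma(s))-f(x))/s$ nondecreasing with limit $f^{\infty}(\xi)$, so
\[
\langle \nabla f(x),\dot\gamma(0)\rangle = (f \circ \gamma)'(0) \leq f^{\infty}(\xi),
\]
and Cauchy--Schwarz with $\|\dot\gamma(0)\|=1$ yields $\|\nabla f(x)\| \geq -f^{\infty}(\xi)$. Taking $\inf$ over $x$ and $\sup$ over $\xi$ gives $\inf_{x}\|\nabla f(x)\| \geq -\inf_{\xi} f^{\infty}(\xi)$, which already proves the implication ``$\inf f^{\infty}<0 \Rightarrow \inf \|\nabla f\| >0$.''

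For the reverse implication, assume $c := \inf_{x}\|\nabla f(x)\|>0$ and introduce the penalized function $f_\epsilon(x) := f(x) + \tfrac{\epsilon}{2}\, d^2(x_0,x)$ for small $\epsilon>0$. Since $(d^2(x_0,\cdot))^{\infty}\equiv +\infty$, one has $f_\epsilon^{\infty}\equiv +\infty$, and~\eqref{eqn:exists} yields a minimizer $x_\epsilon$. The Hadamard gradient formula $\nabla\bigl(\tfrac{1}{2} d^2(x_0,\cdot)\bigr)(x) = -\exp_x^{-1}(x_0)$ combined with the first-order condition gives $\nabla f(x_\epsilon) = \epsilon \exp_{x_\epsilon}^{-1}(x_0)$, so $\|\nabla f(x_\epsilon)\| = \epsilon\, d(x_0,x_\epsilon) \geq c$ and hence $d(x_0,x_\epsilon) \to \infty$ as $\epsilon\to 0$. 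The subgradient inequality at $x_\epsilon$ tested against $x_0$ gives $f(x_0)-f(x_\epsilon) \geq \epsilon\, d^2(x_0,x_\epsilon)$, so the slope from $x_0$ to $x_\epsilon$ is at most $-\|\nabla f(x_\epsilon)\|\leq -c$. Setting $u_\epsilon := \exp_{x_0}^{-1}(x_\epsilon)/d(x_0,x_\epsilon) \in S_{x_0}$, compactness extracts a limit $u_{\epsilon_k}\to u_\infty$; fixing $s>0$, taking $k\to\infty$ in the monotone slope inequality $(f(\exp_{x_0}(s u_\epsilon))-f(x_0))/s \leq -c$, and then letting $s \to \infty$, I obtain $f^{\infty}(u_\infty) \leq -c < 0$.

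For~(2), $f^{\infty}$ on the compact sphere $S_{x_0} \simeq M^{\infty}$ is the pointwise monotone limit of continuous functions in $s$, hence lower semicontinuous, so its infimum $m<0$ is attained at some $\xi^*$. If $\xi_1\neq \xi_2$ both realized $m$, I take the midpoint $\eta$ of their geodesic in the Hadamard space $(CM^{\infty},d^{\infty})$. Comparison with a Euclidean triangle at the apex via CAT(0) gives $d^{\infty}(0,\eta) \leq \cos(\angle^{\infty}(\xi_1,\xi_2)/2)<1$; the degenerate case $\angle^{\infty}=\pi$ allowing $\eta=0$ is ruled out by $f^{\infty}(0)=0 > m$ together with the convexity bound $f^{\infty}(\eta) \leq m$. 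Otherwise $\eta \neq 0$ and positive homogeneity yields $f^{\infty}(\eta/d^{\infty}(0,\eta)) = f^{\infty}(\eta)/d^{\infty}(0,\eta) \leq m/d^{\infty}(0,\eta) < m$, contradicting minimality. The main obstacle I foresee is the penalty step: verifying the identity $f_\epsilon^{\infty}=+\infty$, invoking the Hadamard gradient formula for $d^2$, and executing the $\epsilon\to 0$ limit while preserving the slope bound all require careful manifold bookkeeping, though each ingredient is standard once set up.
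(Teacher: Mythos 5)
Your proof is correct, and the weak-duality step and the argument for part~(2) follow essentially the same lines as the paper (the paper sketches~(2) via lower semicontinuity of $f^{\infty}$ on the compact sphere and the midpoint argument in a footnote; you spell out the degenerate case $\eta = 0$, which the footnote glosses over, so you are being slightly more careful there). The genuine divergence from the paper is in the hard direction of part~(1), namely $\inf_{x}\|\nabla f(x)\| > 0 \Rightarrow \inf_{\xi} f^{\infty}(\xi) < 0$. The paper defers this direction to the gradient-flow argument of Kleiner--Leeb--Millson (and, indeed, the paper's own Theorem~\ref{thm:continuous_main} later gives a self-contained proof: one runs the continuous gradient flow $\dot x(t) = -\nabla f(x(t))$, shows $d(x_0,x(t)) \to \infty$, and extracts the direction at infinity as a minimizer of $f^{\infty}$). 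You instead use a Moreau--Yosida penalization: minimize $f + \tfrac{\epsilon}{2}d^2(x_0,\cdot)$, which is coercive by strong convexity of $d^2$, read off $\|\nabla f(x_\epsilon)\| = \epsilon\, d(x_0,x_\epsilon)$ from the first-order condition, combine it with the subgradient inequality to bound the slope from $x_0$ to $x_\epsilon$ by $-c$, and pass to a limiting direction. Both schemes produce a diverging sequence whose direction certifies $f^{\infty} < 0$, but they are driven by different mechanisms. Your penalization argument is entirely static and requires no ODE existence theory or analysis of the gradient-flow semigroup; the paper's gradient-flow approach is heavier but yields quantitatively more (the strong duality, convergence of $\|\nabla f(x(t))\|$ to $\kappa^*$, velocity of escape, etc.), which is precisely what drives the rest of the paper. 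So your route is a valid, arguably more elementary, proof of this one proposition, but it does not feed into the later developments the way the gradient-flow proof does.
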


The existence in (2) follows from the lower semicontinuity of 
$f^{\infty}$ on the compact space $M^{\infty}$ 
with respect to the cone topology.
The uniqueness of $\xi^*$ in (2) can be seen 
from the positively homogeneous convexity of $f^{\infty}$ on $CM^{\infty}$, as in the Euclidean case.\footnote{If $f^{\infty}(\xi) = f^{\infty}(\xi') = c < 0$,
then by convexity, it holds $f^{\infty}(m) \leq (f^{\infty}(\xi)+ f^{\infty}(\xi'))/2 = c$ for the midpoint $m$ 
of $\xi$ and $\xi'$ in $CM^{\infty}$, and by $\|m\| < 1$ it holds $f^{\infty}(m/\|m\|) = f^{\infty}(m)/\|m\| < c$.}

As a sharpening of the easier part (the only-if part) in (1), 
we here mention the following weak duality relation between 
the gradient norm and the recession function.
\begin{Lem}[Weak duality]\label{lem:weakduality} 
	$\displaystyle 
	\inf_{x \in M}  \|\nabla f(x)\| \geq \sup_{\xi \in B^{\infty}} - f^{\infty}(\xi).
	$
\end{Lem}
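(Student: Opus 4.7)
The plan is to reduce the stated inf-sup inequality to the pointwise bound
$$\|\nabla f(x)\| \;\geq\; -f^{\infty}(\xi) \qquad \text{for every } x \in M \text{ and } \xi \in B^{\infty},$$
from which the conclusion follows by taking the infimum over $x$ and the supremum over $\xi$. The two ingredients I plan to combine are (i) the monotonicity of the difference quotient for a one-variable convex function, applied to $f$ along a geodesic ray, and (ii) the Cauchy--Schwarz inequality.

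First I would fix $x \in M$ and consider $\xi \in M^{\infty}$. Using the base-point independence of $f^{\infty}$ recorded in the excerpt, I may represent $\xi$ by the unit-speed ray $\gamma(s) := \exp_{x}(su)$ starting from $x$ itself, where $u \in S_{x}$ is the unique unit vector corresponding to $\xi$. Then $h(s) := f(\gamma(s))$ is a convex function on $[0,\infty)$, so the slope function $s \mapsto (h(s)-h(0))/s$ is nondecreasing on $(0,\infty)$; sending $s \to 0^{+}$ on the left and $s \to \infty$ on the right gives
$$\langle \nabla f(x), u\rangle \;=\; h'(0) \;\leq\; \lim_{s \to \infty}\frac{f(\gamma(s))-f(x)}{s} \;=\; f^{\infty}(\xi).$$
Cauchy--Schwarz yields $\langle \nabla f(x), u\rangle \geq -\|\nabla f(x)\|\,\|u\| = -\|\nabla f(x)\|$, and combining the two inequalities gives $\|\nabla f(x)\| \geq -f^{\infty}(\xi)$ for every $\xi \in M^{\infty}$.

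The extension from $M^{\infty}$ to $B^{\infty}$ is then immediate from the positive homogeneity of $f^{\infty}$ on the Euclidean cone $CM^{\infty}$. Writing a general $\xi \in B^{\infty}$ as $\xi = r\xi'$ with $\xi' \in M^{\infty}$ and $r \in [0,1]$ (or $\xi = 0$), one has $-f^{\infty}(\xi) = -r\,f^{\infty}(\xi')$; in the case $f^{\infty}(\xi') \leq 0$ the previous step gives $-r f^{\infty}(\xi') \leq r\|\nabla f(x)\| \leq \|\nabla f(x)\|$, while otherwise $-f^{\infty}(\xi) \leq 0 \leq \|\nabla f(x)\|$. Taking the infimum over $x \in M$ and the supremum over $\xi \in B^{\infty}$ completes the argument.

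The only subtle point is bookkeeping: the recession function is defined in the excerpt via a fixed reference point $x_{0}$, whereas the inequality must be proved at an arbitrary $x$. Once the base-point independence (the content of the long footnote preceding the lemma) is invoked, the proof is essentially a one-line combination of convexity along a geodesic and Cauchy--Schwarz, requiring nothing beyond differentiability and convexity of $f$.
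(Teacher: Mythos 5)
Your proof is correct and takes essentially the same route as the paper: both reduce to the pointwise inequality via the monotonicity of the convex difference quotient along the ray $s\mapsto\exp_x(s\xi)$ together with Cauchy--Schwarz. The only cosmetic difference is that the paper works directly with $\xi\in B_x\simeq B^\infty$ (using the extension of $f^\infty$ to $CM^\infty\simeq T_x$ and the bound $\langle\nabla f(x),\xi\rangle\ge -\|\nabla f(x)\|\,\|\xi\|$ with $\|\xi\|\le 1$), which folds your separate homogeneity/casework step for passing from $M^\infty$ to $B^\infty$ into a single line.
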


\begin{proof}
	For $x\in M$ and $\xi \in B_x \simeq B^\infty$, it holds
	\[
	f^{\infty}(\xi) = \lim_{t \to \infty} \frac{f(\exp_x t\xi)-f(x)}{t} \geq   \lim_{t \to 0} \frac{f(\exp_x t\xi)-f(x)}{t}
	= \langle \nabla f(x),\xi \rangle \geq -  \|\nabla f(x)\|,
	\]
    where the first inequality follows from convexity of $f$ (monotonicity of $t \mapsto (f(\exp_x t\xi)-f(x))/t$) 
    and the last inequality follows from Cauchy-Schwarz and $\|\xi\| \leq 1$.
\end{proof}
In Section~\ref{sec:gradientflow}, 
we show, via the gradient flow of $f$, that  
the equality ({\em strong duality}) always holds.
This technique may be viewed as a refinement of
the proof of the if-part in~\cite[Proposition~\ref{prop:f^infty} (1)]{KLM2009JDG}, 
in which the limit of the normalized gradient flow of $f$ constructs $\xi$ with $f^{\infty}(\xi) < 0$.
A similar gradient-flow approach can be found in 
the setting of GIT~\cite{ChenSun2014,GRS_Book,Woodward2011moment}; see Section~\ref{subsec:norm_minimization}. 
%


\section{Asymptotic behavior of gradient flow}\label{sec:gradientflow}

\subsection{Continuous-time gradient flow}\label{subsec:continuous_GR}
Throughout, $M$ denotes a Hadamard manifold.
Let $f:M \to \RR$ be a twice differentiable convex function.
Consider the following differential equation---the {\em gradient flow} of $f$,
\begin{equation}\label{eqn:GR}
\frac{d x(t)}{dt} = - \nabla f(x(t)), \quad x(0) = x_0.
\end{equation} 
It is clear that the trajectory $x(t)$ is going to minimize $f$; 
see Lemma~\ref{lem:GR-proerties} (2) below. 
In fact, if a minimizer of $f$ exists,
then $x(t)$ converges to a minimizer. 
This convergence is known
for the general setting of Hadamard spaces; see e.g.,  \cite[Theorem 5.1.16]{Bacak_Book2014} and \cite[Theorem 2.41]{Mayer1998}.
Our focus is on the case where $f$ is unbounded below, particularly the case where the minimum gradient-norm is positive.
We establish the following convergence 
of an unbounded gradient flow and strong duality 
between the gradient norm and the recession function. 
\begin{Thm}\label{thm:continuous_main}
	Suppose that $\kappa^* := \inf_{x \in M} \|\nabla f(x)\| > 0$.
	Let $x(t)$ be the solution of (\ref{eqn:GR}). 
 \begin{itemize}
 \item[(1)] $\|\nabla f(x(t))\|$ converges to the minimum gradient-norm $\kappa^*$, and
 \item[(2)] $x(t)$ converges, in cone topology, to the unique minimizer $\xi^*$ of $f^{\infty}$ over $M^{\infty}$,
 \end{itemize}
 where the following equality holds
 \begin{equation}\label{eqn:strongduality}
	\lim_{t \to \infty} \|\nabla f(x(t))\| = \inf_{x \in M}\|\nabla f(x)\| = \sup_{\xi \in M^{\infty}} - f^{\infty}(\xi) = - f^{\infty} \left(\lim_{t \to \infty} x(t)\right).
\end{equation}
 \end{Thm}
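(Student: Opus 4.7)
The plan is to combine monotonicity of $\|\nabla f(x(t))\|$ along the flow, an averaging estimate for the recession function, the weak duality of Lemma~\ref{lem:weakduality}, and the uniqueness in Proposition~\ref{prop:f^infty}(2) to pin down both limits simultaneously. First, differentiating and using convexity,
\[
\tfrac{d}{dt}\|\nabla f(x(t))\|^2 = -2\langle \nabla^2 f(x(t))\nabla f(x(t)),\nabla f(x(t))\rangle \leq 0,
\]
so $\kappa_\infty := \lim_{t\to\infty}\|\nabla f(x(t))\|$ exists with $\kappa_\infty\geq\kappa^*>0$. Since $\tfrac{d}{dt}f(x(t)) = -\|\nabla f(x(t))\|^2 \leq -\kappa^{*2}$, also $f(x(t))\to -\infty$, and continuity of $f$ forces $x(t)$ to leave every compact set; in particular $d(x_0,x(t))\to\infty$ for any fixed $x_0\in M$. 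Writing $u_t := \exp_{x_0}^{-1}x(t)/d(x_0,x(t)) \in S_{x_0}$, compactness of $S_{x_0}$ lets us extract a subsequence $t_n\to\infty$ with $u_{t_n}\to u^*\in S_{x_0}$; let $\xi^*\in M^\infty$ denote the corresponding direction.

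The crux is to show $f^\infty(\xi^*)\leq -\kappa_\infty$. Applying convexity of $f$ along the geodesic $s\mapsto \exp_{x_0}su_{t_n}$ from $x_0$ to $x(t_n)$ gives, for each $s\in(0,d(x_0,x(t_n))]$,
\[
\frac{f(\exp_{x_0}su_{t_n}) - f(x_0)}{s} \;\leq\; \frac{f(x(t_n))-f(x_0)}{d(x_0,x(t_n))}.
\]
With $L(t):= \int_0^t \|\nabla f(x(s))\|\,ds \geq d(x_0,x(t))$ and $f(x(t))-f(x_0) = -\int_0^t\|\nabla f(x(s))\|^2\,ds$, the right-hand side is bounded above by $-\int_0^{t_n}\|\nabla f(x(s))\|^2\,ds \,/\, L(t_n)$, which is the negative of the $\|\nabla f(x(s))\|\,ds$-weighted average of $\|\nabla f(x(s))\|$. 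Since $\|\nabla f(x(s))\|\downarrow \kappa_\infty$ and $L(t_n)\to\infty$ (as $\|\nabla f\|\geq\kappa^*>0$), this weighted average converges to $\kappa_\infty$. Letting $n\to\infty$ for fixed $s$ by continuity of $\exp_{x_0}$, and then $s\to\infty$, yields $f^\infty(\xi^*)\leq -\kappa_\infty$.

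Combining with Lemma~\ref{lem:weakduality},
\[
\kappa^*\;\leq\;\kappa_\infty \;\leq\; -f^\infty(\xi^*) \;\leq\; \sup_{\xi\in M^\infty}-f^\infty(\xi) \;\leq\; \kappa^*,
\]
so equality holds throughout, which already yields (\ref{eqn:strongduality}) for this subsequential limit. Proposition~\ref{prop:f^infty}(2) then says $\xi^*$ is the \emph{unique} minimizer of $f^\infty$ over $M^\infty$; since the same argument applies to any convergent subsequence of $u_t$, every subsequential limit must equal $u^*$, and compactness of $S_{x_0}$ upgrades this to $u_t\to u^*$. Combined with $d(x_0,x(t))\to\infty$, this is precisely $x(t)\to\xi^*$ in cone topology. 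The main obstacle is the averaging argument of the second paragraph: because the gradient-flow trajectory need not be asymptotically geodesic, the naive ratio $d(x_0,x(t))/t$ can fail to converge to $\kappa_\infty$, and one must compare against arc length $L(t)$ to extract the sharp upper bound $-\kappa_\infty$.
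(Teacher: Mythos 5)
Your proof is correct and follows essentially the same route as the paper: fix a base point, use convexity along the geodesic from $x_0$ to $x(t)$ to bound the difference quotient by the ratio $-\int_0^t\|\nabla f\|^2\,d\tau\,/\,\int_0^t\|\nabla f\|\,d\tau$, pass to a subsequential limit to get $f^\infty(\xi^*)\leq-\kappa_\infty$, and close the loop with weak duality and uniqueness of the minimizer. The only cosmetic difference is that the paper bounds that ratio uniformly by $-\kappa$ via Cauchy--Schwarz plus monotonicity of $\|\nabla f(x(t))\|$, while you argue directly that the $\|\nabla f\|$-weighted average of $\|\nabla f\|$ converges to $\kappa_\infty$, which is the same idea packaged slightly differently.
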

 We should mention related results.
In the general setting of Hadamard space $X$, 
Caprace and Lytchak~\cite[Proposition 4.2]{CapraceLytchak2010} showed 
that the gradient-flow curve of a Lipschitz convex function 
with $\kappa^* > 0$ converges to a point in 
the boundary $X^{\infty}$ of $X$.
Their proof relies on a very general result of Karlsson and Margulis~\cite[Theorem 2.1]{KarlssonMargulis1999} for semi-contraction semigroups in uniformly convex spaces. 
Here it is well-known\footnote{
It is found in \cite[Theorem 4.0.4]{Ambrosio_Book} for the general setting of gradient flows 
in metric spaces. 
For our manifold case, it is an easy consequence 
of the first variation formula~\cite[Proposition 2.2]{Sakai1996} as follows: 
$(d/{dt}) d(\phi_t(x),\phi_t(y))^2/2 = \langle - \nabla f(\phi_t(y)), \dot \gamma(1)\rangle - \langle - \nabla f(\phi_t(x)), \dot \gamma(0)\rangle = - \int_{0}^1 (d/dt) \langle \nabla f(\gamma(s)), \dot \gamma(s)\rangle ds = - \int_{0}^1 \langle \nabla^2 f(\gamma(s))\dot \gamma(s), \dot \gamma(s)\rangle ds \leq 0$, where $\gamma:[0,1] \to M$ is a geodesic from $\phi_t(x)$ to $\phi_t(y)$.
}
that the gradient-flow semigroup $\phi_t$ satisfies the {\em \mbox{(semi-)}contraction property}: 
\begin{equation}\label{eqn:contraction}
d(\phi_t(x),\phi_t(y)) \leq d(x,y) \quad (t \in \RR_+, x,y \in M),
\end{equation}
where $\phi_t(x)$ 
is the solution of (\ref{eqn:GR}) with initial point $x(0) = x$.
If the {\em velocity of escape}
\begin{equation}\label{eqn:velocity}
\kappa^*(x) :=  \limsup_{t \to \infty} \frac{d(\phi_t(x),x)}{t}
\end{equation}
is positive, then \cite[Theorem 2.1]{KarlssonMargulis1999} is applicable 
for convergence of $\phi_t(x)$ in $M^{\infty}$;   
Caprace and Lytchak  actually showed that $\kappa^* > 0$ implies $\kappa^*(x) > 0$.
Although one can deduce the entire statement of Theorem~\ref{thm:continuous_main} from this with more effort, we take a different approach that relies neither on~\cite{KarlssonMargulis1999} nor on the contraction property~(\ref{eqn:contraction}).
As mentioned after Lemma~\ref{lem:weakduality}, 
our proof is partly inspired by an idea in \cite{KLM2009JDG}, but it 
directly establishes the relation (\ref{eqn:strongduality}).
An advantage of this approach is that it can adapt to 
the discrete setting in Section~\ref{subsec:GR_D}. 

We start with the following well-known properties of gradient flows.
\begin{Lem}\label{lem:GR-proerties}
	\begin{itemize}
		\item[(1)] The solution $x(t)$ of (\ref{eqn:GR}) is defined on $\RR_+$.
		\item[(2)] $t \mapsto f(x(t))$ is nonincreasing.
		\item[(3)] $t \mapsto \|\nabla f(x(t))\|$ is nonincreasing.
	\end{itemize}
\end{Lem}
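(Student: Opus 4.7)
My plan is to prove (2) and (3) first by direct differentiation along the trajectory, working on the maximal interval of existence of the gradient-flow ODE, and then to deduce (1) from the speed bound furnished by (3) together with completeness of $M$. The order matters because establishing (3) presupposes that a solution exists on a nontrivial interval, so I would begin by invoking standard ODE theory on manifolds to obtain local existence and uniqueness of a $C^1$ solution $x:[0,T_{\max}) \to M$, with $T_{\max} \in (0,\infty]$ maximal.

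For (2), the chain rule combined with (\ref{eqn:GR}) immediately gives
\[
\frac{d}{dt} f(x(t)) = df_{x(t)}(\dot x(t)) = \langle \nabla f(x(t)), \dot x(t)\rangle = -\|\nabla f(x(t))\|^2 \leq 0,
\]
so $f\circ x$ is nonincreasing on $[0, T_{\max})$. For (3), I would differentiate $\tfrac{1}{2}\|\nabla f(x(t))\|^2$ using metric compatibility of the Levi-Civita connection together with the Hessian definition $\nabla^2 f(x) u = \nabla_u \nabla f$:
\[
\frac{d}{dt} \tfrac{1}{2}\|\nabla f(x(t))\|^2 = \langle \nabla_{\dot x(t)} \nabla f, \nabla f(x(t))\rangle = -\langle \nabla^2 f(x(t))\,\nabla f(x(t)), \nabla f(x(t))\rangle \leq 0,
\]
where the final inequality is the positive semidefiniteness of the Hessian, which the excerpt has already identified as equivalent to convexity of $f$. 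This shows $t \mapsto \|\nabla f(x(t))\|$ is nonincreasing on $[0, T_{\max})$.

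For (1), the bound just established yields $\|\dot x(t)\| = \|\nabla f(x(t))\| \leq \|\nabla f(x_0)\|$ for all $t \in [0, T_{\max})$, hence $d(x(s), x(t)) \leq (t-s)\|\nabla f(x_0)\|$ for $0 \leq s \leq t < T_{\max}$. If $T_{\max} < \infty$ were true, $x(t)$ would be Cauchy as $t \to T_{\max}^-$, and completeness of $M$ would supply a limit point $x^* \in M$; restarting the ODE at $x^*$ would then extend the solution past $T_{\max}$, contradicting maximality. Hence $T_{\max} = \infty$, proving (1).

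The only real obstacle is a bookkeeping one, namely to avoid circularity between (1) and (3): the proof of (3) must be phrased on the a priori maximal interval of existence, and only afterwards may the resulting Lipschitz estimate be combined with completeness of $M$ to promote the solution to all of $\RR_+$. No other analytic difficulty arises, as all three assertions reduce to one-line computations once the correct ordering is fixed.
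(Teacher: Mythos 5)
Your proposal is correct and follows essentially the same route as the paper's proof: both establish (2) and (3) by differentiating $f(x(t))$ and $\|\nabla f(x(t))\|^2$ along the flow and invoking positive semidefiniteness of the Hessian, then use the resulting speed bound $\|\dot x(t)\| \leq \|\nabla f(x_0)\|$ together with completeness to extend past any finite maximal time. Your explicit remark about establishing (2)--(3) on the maximal interval first to avoid circularity is a sensible point of bookkeeping that the paper leaves implicit.
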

We describe a proof since the intermediate equations will be used. 

\begin{proof}
	(2) follows from
	$
	(d/dt)f(x(t)) = \langle \nabla f(x(t)),\dot x(t) \rangle = - \| \nabla f(x(t))\|^2 
	\leq  0. 
	$
 
	(3) follows from
 $(d/dt) \| \nabla f(x(t))\|^2 =  - 2 \langle \nabla^2 f(x(t)) \dot x(t),\dot x(t) \rangle \leq 0$ by convexity of $f$ (positive semidefiniteness of $\nabla^2 f(x(t))$).
	
	(1). Suppose that $x(t)$ is defined on $[0,T)$ for finite $T> 0$.
    For $0 \leq t \leq t' < T$, it holds
    \[
    d(x(t),x(t')) \leq \int_t^{t'} \|\dot x(s)\| ds  \leq \|\nabla f(x_0)\| (t' - t),    
    \]
    where the second inequality follows from (3). 
    Therefore, $x(t)$ is Cauchy for $t \to T$. 
    Since $M$ is complete, the limit $x^*:=\lim_{t \to T}x(x)$ exists in $M$.
    Then $x(t)$ is connected to the solution of $\dot y(t) = - \nabla f(y(t))$, $y(0) = x^*$, 
    and is defined on $[0, T+ \epsilon)$ for some $\epsilon > 0$. 
    If we take maximal $T$, it must be $T = \infty$.   
\end{proof}

\begin{proof}[Proof of Theorem~\ref{thm:continuous_main}]
Let $\kappa := \lim_{t \to \infty} \|\nabla f(x(t))\| \geq \kappa^* > 0$.
	First, we note
	\begin{eqnarray}
	&& f(x(t)) - f(x_0) = \int_{0}^{t} \frac{d}{d\tau} f(x(\tau)) d \tau=  - \int_{0}^t \|\nabla f(x(\tau))\|^2 d\tau \leq - \kappa^2t, \label{eqn:note1} \\
	&& d(x(t),x_0)  \leq   \int_0^t \|\dot x(\tau)\| d\tau =  \int_0^t\|\nabla f(x(\tau))\|d\tau.\label{eqn:note2}
	\end{eqnarray}
    where the last inequality in (\ref{eqn:note1}) follows from Lemma~\ref{lem:GR-proerties}~(3).
	  Then it holds $d(x(t),x_0) \to \infty$ for $t \to \infty$. 
   Otherwise, $x(t)$  
   has an accumulation point $x^*$ in $M$ and $f(x^*) = - \infty$ by (\ref{eqn:note1}), contradicting $f(x^*) \in \RR$.
   
	Define $u(t) \in S_{x_0}$  
	via 
	$
	x(t) = \exp_{x_0} d(x(t),x_0) u(t).
	$
	For $s \in (0, d(x(t),x_0)]$, 
	by convexity of $f$ along the geodesic from $x_0$ to $x(t)$, it holds
	\begin{equation*}\label{eqn:convexity}
	f(\exp_{x_0} s u(t)) - f(x_0) \leq \frac{s}{d(x(t),x_0)} (f(x(t))- f(x_0)). 
	\end{equation*}
From this, we have
\begin{eqnarray*}
	&& \frac{f(\exp_{x_0} s u(t)) - f(x_0)}{s}
	\leq \frac{f(x(t))- f(x_0)}{d(x(t),x_0)}
	\leq - \frac{\int_0^t \|\nabla f(x(\tau))\|^2 d\tau}{\int_0^t \|\nabla f(x(\tau))\| d\tau}\\\notag
	&& \leq  -\frac1t\int_0^t\|\nabla f(x(\tau))\| d\tau \leq  - \kappa,
	\end{eqnarray*}
 where the second inequality follows from (\ref{eqn:note1}) and (\ref{eqn:note2}), 
	the third from the Cauchy--Schwartz inequality 
 $(\int_0^t F(\tau)G(\tau) d\tau)^2 \leq \int_0^t F(\tau)^2 d\tau \int_0^t  G(\tau)^2 d\tau$ 
 for $F(\tau) := \|\nabla f(x(\tau))\|$ and $G(\tau) := 1$, and the fourth from Lemma~\ref{lem:GR-proerties}~(3).

	Choose any convergence subsequence $u(t_i)$ with $t_i \to \infty$ $(d(x(t_i),x_0) \to \infty)$ and $u(t_i) \to u^*$.
	Then it holds
	\[
	\frac{f(\exp_{x_0} s u^*) - f(x_0)}{s} \leq  - \kappa.
	\]
	For $s \to \infty$, we have
	$
	f^{\infty}(u^*) \leq - \kappa.
	$
	Then, we have 
	\[
	\inf_{\xi \in M^{\infty}} f^{\infty}(\xi) \leq f^{\infty}(u^*) \leq -  \kappa \leq - \kappa^* = \sup_{x \in M} - \|\nabla f(x)\| \leq \inf_{\xi \in M^{\infty}} f^{\infty}(\xi),
	\]
    where we use the weak duality (Lemma~\ref{lem:weakduality}) for the last inequality.
    This shows $\kappa = \kappa^*$ and proves (\ref{eqn:strongduality}).
    Since the minimizer $\xi^*$ of $f^{\infty}$ over $M^\infty$ uniquely exists (Proposition~\ref{prop:f^infty}~(2)), 
	it must hold $\xi^* = u^*$. 
	We showed that any convergent subsequence $u(t_i)$ of $u(t)$ 
	converges to $\xi^*$. Since $S_{x_0}$ is compact,  
	$u(t)$ itself converges to $\xi^*$.
\end{proof}

Even if $\kappa^* = 0$, the strong duality holds (since $f^{\infty}(0) = 0$).
\begin{Cor}\label{cor:duality}
	$\displaystyle
	\inf_{x \in M}\|\nabla f(x)\| = \sup_{\xi \in B^{\infty}} - f^{\infty}(\xi).
	$
\end{Cor}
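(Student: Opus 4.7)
The plan is to reduce the corollary to Theorem~\ref{thm:continuous_main} by a simple case split on whether $\kappa^{*} := \inf_{x \in M} \|\nabla f(x)\|$ is positive or zero, using the positive homogeneity of $f^{\infty}$ on the Euclidean cone $CM^{\infty}$ to relate the supremum over $B^{\infty}$ to the one over $M^{\infty}$ (the latter being the ``unit sphere'' of $CM^{\infty}$).

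The preliminary step is the following homogeneity reduction. For $\xi \in M^{\infty}$, viewed as a norm-$1$ element of $CM^{\infty}$, and $t \in [0,1]$, positive homogeneity gives $-f^{\infty}(t\xi) = -t\, f^{\infty}(\xi)$; as a function of $t$ this is maximized at $t=1$ when $f^{\infty}(\xi) < 0$, and at $t=0$ (with value $0$) otherwise. Since every element of $B^{\infty}$ is either the origin or of the form $t\xi$ for some $\xi \in M^{\infty}$ and $t \in (0,1]$, this yields
\[
\sup_{\xi \in B^{\infty}} -f^{\infty}(\xi) \;=\; \max\!\left\{0,\; \sup_{\xi \in M^{\infty}} -f^{\infty}(\xi)\right\}.
\]

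In Case~1 ($\kappa^{*} > 0$), Proposition~\ref{prop:f^infty}(1) gives $\inf_{\xi \in M^{\infty}} f^{\infty}(\xi) < 0$, so the right-hand side above is strictly positive and equals $\sup_{\xi \in M^{\infty}} - f^{\infty}(\xi)$. Theorem~\ref{thm:continuous_main} then identifies this quantity with $\kappa^{*}$, proving the corollary. In Case~2 ($\kappa^{*} = 0$), weak duality (Lemma~\ref{lem:weakduality}) gives $\sup_{\xi \in B^{\infty}} - f^{\infty}(\xi) \leq 0$, while the origin $0 \in B^{\infty}$ satisfies $f^{\infty}(0) = 0$ by positive homogeneity, giving the reverse inequality $\geq 0$. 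Hence both sides equal $0 = \kappa^{*}$.

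There is no real obstacle: the only subtlety is that Theorem~\ref{thm:continuous_main} only speaks about the supremum over $M^{\infty}$ in the unbounded regime, so one has to be careful to include the origin of $CM^{\infty}$ when $\kappa^{*} = 0$ in order not to end up with a vacuous supremum. The homogeneity observation handles exactly this.
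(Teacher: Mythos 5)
Your proof is correct and follows essentially the same route the paper intends: for $\kappa^* > 0$ reduce to Theorem~\ref{thm:continuous_main} by passing from $M^{\infty}$ to $B^{\infty}$ via positive homogeneity of $f^{\infty}$, and for $\kappa^* = 0$ combine weak duality (Lemma~\ref{lem:weakduality}) with $f^{\infty}(0) = 0$, which is exactly the one-line remark the paper gives immediately before the corollary.
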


The velocity of escape (\ref{eqn:velocity}) coincides with the minimum gradient-norm.
\begin{Prop}\label{prop:velocity_of_escape}
Suppose that $\kappa^* :=  \inf_{x \in M} \|\nabla f(x)\| > 0$. Let
$\xi^* \in S_{x_0}$ denote the representative of the unique minimizer of $f^{\infty}$ over $M^{\infty} \simeq S_{x_0}$.
Then the following hold:
\begin{itemize}
\item[(1)] $\displaystyle \lim_{t \to \infty} \frac{d(x_0,x(t))}{t} = \kappa^*$. 
\item[(2)] $\displaystyle \lim_{t \to \infty} \frac{\exp_{x_0}^{-1} x(t)}{t} = \kappa^*\xi^*$. 
\end{itemize}
\end{Prop}
\begin{proof}
(1). For $t > s \geq 0$, it holds
$d(x(s),x(t)) \leq \int_s^t\|\nabla f(x(\tau))\|d\tau \leq \|\nabla f(x(s))\|(t-s)$ (by Lemma~\ref{lem:GR-proerties}~(3)).
Hence
\begin{equation}\label{eqn:limsup<=kappa*}
\limsup_{t \to \infty} \frac{d(x_0,x(t))}{t} = \limsup_{t \to \infty}  \frac{d(x(s),x(t))}{t-s} \leq \|\nabla f(x(s))\| \quad \underset{s \to \infty}{\longrightarrow} \kappa^*,
\end{equation}
where the convergence of $\|\nabla f(x(s))\|$ to $\kappa^*$ 
follows from Theorem~\ref{thm:continuous_main}~(1). 
On the other hand, by taking the unit speed geodesic $\gamma$ from $x(s)$ to $x(t)$, we have
\begin{eqnarray*}
&& - \|\nabla f(x(t))\|^2(t-s) \geq -\int_s^t \|\nabla f(x(\tau))\|^2 d\tau = f(x(t)) - f(x(s)) \\
&& \geq \langle \dot \gamma(0),\nabla f(x(s)) \rangle d(x(s),x(t)) \geq - \| \nabla f(x(s))\| d(x(s),x(t)), 
\end{eqnarray*}
where the first equality follows from Lemma~\ref{lem:GR-proerties}~(3) and the second inequality 
from convexity of $f$ along $\gamma$, and the last from the Cauchy-Schwarz inequality. 
Thus it holds
\begin{eqnarray}
&& \liminf_{t \to \infty} \frac{d(x_0,x(t))}{t} 
\geq \liminf_{t \to \infty} \frac{d(x(t),x(s)) - d(x_0,x(s))}{t} =  \liminf_{t \to \infty} \frac{d(x(t),x(s))}{t-s} \nonumber \\
&& \geq \frac{\lim_{t \to \infty}\|\nabla f(x(t))\|^2}{\|\nabla f(x(s))\|} =  \frac{(\kappa^*)^2}{\|\nabla f(x(s))\|} \quad \underset{s \to \infty}{\longrightarrow} \kappa^*.  \label{eqn:liminf=>kappa*}
\end{eqnarray}
By (\ref{eqn:limsup<=kappa*}) and (\ref{eqn:liminf=>kappa*}), we have
\[
\kappa^* \leq \liminf_{t \to \infty} \frac{d(x_0,x(t))}{t} \leq \limsup_{t \to \infty} \frac{d(x_0,x(t))}{t} \leq \kappa^*. 
\]
(2). By Theorem~\ref{thm:continuous_main}, it holds
$\lim_{t \to \infty} \frac{\exp_{x_0}^{-1} x(t)}{d(x_0,x(t))} = \xi^*$. Therefore, by (1), we have
\[
\lim_{t \to \infty}\frac{\exp_{x_0}^{-1} x(t)}{t} = \lim_{t \to \infty} \frac{\exp_{x_0}^{-1} x(t)}{d(x_0,x(t))} \frac{d(x_0,x(t))}{t}   = \kappa^* \xi^*. \qedhere
\]
\end{proof}
We next consider ``convergence" of the gradient 
$\nabla f(x(t))$. 
Since the space $T_{x(t)}$ varies, the convergence concept of $\nabla f(x(t))$ is less obvious.
In our intuition, $\nabla f(x(t))$ and $\xi^*$ 
would have opposite directions in the limit. The following partially justifies this intuition.
\begin{Prop}\label{prop:accumulation_of_transported_gradient}
Suppose that $\kappa^* := \inf_{x \in M} \|\nabla f(x)\| > 0$.
Let $\xi^* \in S_{x_0}$ denote the representative of the unique minimizer of $f^{\infty}$ over $M^{\infty} \simeq S_{x_0}$. Then it holds
\begin{equation*}
   \liminf_{t \to \infty} \|\tau_{x(t) \to x_0} \nabla f(x(t)) + \kappa^* \xi^*\|  = 0.
\end{equation*}
\end{Prop}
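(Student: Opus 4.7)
The plan is to combine Proposition~\ref{prop:velocity_of_escape} with the first variation formula for the distance $\phi(t) := d(x_0, x(t))$ in order to extract a subsequence of times along which $\nabla f(x(t))$ becomes asymptotically antiparallel to the outward tangent at $x(t)$ of the unit-speed geodesic $\gamma_t$ from $x_0$ to $x(t)$. Parallel-transporting this tangent back along $\gamma_t$ recovers $u(t) \to \xi^*$ (Theorem~\ref{thm:continuous_main}(2)), and this translates the alignment into the desired estimate at $x_0$.

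Concretely, let $\gamma_t : [0,d_t] \to M$ satisfy $\gamma_t(0) = x_0$ and $\gamma_t(d_t) = x(t)$, so that $\dot\gamma_t(0) = u(t)$ and $\|\dot\gamma_t(d_t)\| = 1$. The first variation formula of arc length yields
\[
\phi'(t) = \langle \dot x(t), \dot\gamma_t(d_t)\rangle = -\langle \nabla f(x(t)), \dot\gamma_t(d_t)\rangle,
\]
so $\phi'(t) \leq \|\nabla f(x(t))\|$ by Cauchy--Schwarz, and Theorem~\ref{thm:continuous_main}(1) gives $\limsup_t \phi'(t) \leq \kappa^*$. On the other hand, Proposition~\ref{prop:velocity_of_escape} says $\phi(t)/t \to \kappa^*$, which rules out $\limsup_t \phi'(t) < \kappa^*$: otherwise $\phi'(t) \leq \kappa^* - \delta$ eventually would force $\phi(t) \leq (\kappa^* - \delta)t + O(1)$, contradicting $\phi(t)/t \to \kappa^*$. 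Hence $\limsup_t \phi'(t) = \kappa^*$, and one obtains a sequence $t_i \to \infty$ along which $\phi'(t_i) \to \kappa^*$, equivalently $\langle \nabla f(x(t_i)), \dot\gamma_{t_i}(d_{t_i})\rangle \to -\kappa^*$.

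Along such a subsequence, expanding the square gives
\[
\|\nabla f(x(t_i)) + \kappa^*\,\dot\gamma_{t_i}(d_{t_i})\|^2 = \|\nabla f(x(t_i))\|^2 + 2\kappa^* \langle \nabla f(x(t_i)), \dot\gamma_{t_i}(d_{t_i})\rangle + (\kappa^*)^2 \to 0,
\]
since $\|\nabla f(x(t_i))\|^2 \to (\kappa^*)^2$ by Theorem~\ref{thm:continuous_main}(1). Parallel transport along the geodesic $\gamma_{t_i}$ is an isometry and preserves its own tangent field, so $\tau_{x(t_i) \to x_0}\dot\gamma_{t_i}(d_{t_i}) = \dot\gamma_{t_i}(0) = u(t_i)$, and therefore
\[
\|\tau_{x(t_i) \to x_0}\nabla f(x(t_i)) + \kappa^* u(t_i)\| \to 0.
\]
Combining with $u(t_i) \to \xi^*$ from Theorem~\ref{thm:continuous_main}(2) yields $\liminf_{t\to\infty} \|\tau_{x(t)\to x_0}\nabla f(x(t)) + \kappa^* \xi^*\| = 0$.

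I expect the delicate point to be the subsequence extraction itself: one cannot assert $\lim_t \phi'(t) = \kappa^*$ in general (which is why the proposition is stated as a $\liminf$), only that $\phi'$ attains values approaching $\kappa^*$ infinitely often. Isolating these instants is what turns the average-rate control of Proposition~\ref{prop:velocity_of_escape} into an asymptotic Cauchy--Schwarz equality, and from there parallel transport along $\gamma_{t_i}$ converts the pointwise alignment into the claimed convergence in $T_{x_0}$.
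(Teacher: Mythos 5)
Your proof is correct and follows essentially the same route as the paper: first variation of $t\mapsto d(x_0,x(t))$, comparison with Proposition~\ref{prop:velocity_of_escape} to force $\limsup_t \phi'(t)=\kappa^*$, subsequence extraction, and parallel transport of $\dot\gamma_{t_i}(d_{t_i})$ back to $u(t_i)\to\xi^*$. The only differences are cosmetic: you expand the squared norm directly where the paper phrases the alignment in terms of angles converging to $\pi$, and you spell out the elementary contradiction behind the inequality $\limsup_t \phi(t)/t \le \limsup_t \phi'(t)$, which the paper cites as a general property.
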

\begin{Question}\label{que:convergence}
    Does 
    $\displaystyle \lim_{t \to \infty} \tau_{x(t) \to x_0} \nabla f(x(t)) = -  \kappa^* \xi^*$
    hold ? 
\end{Question}
We will see in Section~\ref{sec:applications} 
that this property has important consequences.

\begin{proof}[Proof of Proposition~\ref{prop:accumulation_of_transported_gradient}]
Let $\gamma_t$ be the unit-speed geodesic from $x_0$ to $x(t)$.
Let $d(t) := d(x_0,x(t))$.
Then, by \cite[Chapter III, Proposition 4.8 (1)]{Sakai1996}, it holds
$
     d(t)' = \left\langle\dot\gamma_t(d(t)), \dot{x}(t)\right\rangle.
$
Therefore,  we have
\begin{equation}\label{eqn:d(t)'}
        \limsup_{t \to \infty} d(t)' = \limsup_{t \to \infty} \left\langle\dot\gamma_t(d(t)), \dot{x}(t)\right\rangle \le \lim_{t \to \infty} \|\dot{x}(t)\| = \lim_{t \to \infty} \|\nabla f(x(t))\| = \kappa^\ast.
\end{equation}
On the other hand, by Proposition~\ref{prop:velocity_of_escape}, it holds $\kappa^* = \limsup_{t \to \infty} d(t)/t \leq \limsup_{t \to \infty} d(t)'$, 
where the inequality follows from (\ref{eqn:exercise2}) with $h(t) := d'(t)$.
Thus, the equality holds in (\ref{eqn:d(t)'}).
Necessarily we have
\begin{equation}
\limsup_{t\to \infty} \angle(\dot \gamma_t(d(t)), \nabla f(x(t)))  = \pi.
\end{equation}
By $\|\nabla f(x(t))\| \to \kappa^*$, 
we have 
$
\liminf_{t\to \infty} \| \nabla f(x(t)) + \kappa^* \dot \gamma_t(d(t)) \| = 0$.
With parallel transport $\tau_{x(t) \to x_0}$ and 
$\dot \gamma_t(0) \to \xi^*$, we have the claim.
\end{proof}

\subsection{Discrete-time gradient flow (gradient descent)}\label{subsec:GR_D}

Next we consider the discrete version.
Suppose that $f: M \to \RR$ is an  $L$-smooth convex function.
Consider the following sequence: 
\begin{equation}\label{eqn:DGR}
x_{i+1} := \exp_{x_i} \left( - \frac{1}{L}\nabla f(x_i) \right) \quad  (i=0,1,\ldots).
\end{equation}
This is nothing but the trajectory generated by gradient descent with initial point $x_0$ and step-size $1/L$; we discuss in Remark~\ref{rem:proximal} 
another type of discrete gradient flow.
The convergence/accumulation of $x_i$ to a minimizer of $f$ 
can be shown under several reasonable assumptions; see e.g., \cite[Theorem 11.29]{Boumal_Book}.
For the unbounded case, as in the continuous setting, we establish the following. 
\begin{Thm}\label{thm:discrete_main}
Suppose that $\kappa^* := \inf_{x \in M} \|\nabla f(x)\| > 0$.
	Let $x_i$ be the sequence in (\ref{eqn:DGR}). 
 \begin{itemize}
 \item[(1)] $\|\nabla f(x_i)\|$ converges to the minimum gradient-norm $\kappa^*$, and
 \item[(2)] $x_i$ converges, in cone topology, to the unique minimizer $\xi^* \in M^{\infty}$ of $f^{\infty}$.
 \end{itemize}
 Hence, the following holds
	\begin{equation}
	\lim_{i \to \infty} \|\nabla f(x_i)\| = \inf_{x \in M}\|\nabla f(x)\| = \sup_{\xi \in M^{\infty}} - f^{\infty}(\xi) = - f^{\infty} \left(\lim_{i \to \infty}x_i\right).
	\end{equation}
\end{Thm}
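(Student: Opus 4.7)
The proof follows the blueprint of Theorem~\ref{thm:continuous_main} with discrete analogues of its three analytic ingredients: a one-step descent inequality, monotonicity of $\|\nabla f(x_i)\|$, and a Cauchy--Schwarz-type estimate on the ratio of total function decrease to total path length. First I would establish monotonicity $\|\nabla f(x_{i+1})\| \le \|\nabla f(x_i)\|$, which in the Euclidean case is a standard consequence of the co-coercivity inequality for $L$-smooth convex functions; this carries over to Hadamard manifolds after transporting gradients parallelly along the one-step geodesic. Consequently $\kappa := \lim_{i\to\infty}\|\nabla f(x_i)\|$ exists and satisfies $\kappa \ge \kappa^*$.

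Next I would use geodesic convexity of $f$ along the one-step geodesic from $x_i$ to $x_{i+1}$ to derive the sharper-than-usual descent estimate
\[
f(x_i) - f(x_{i+1}) \;\ge\; \frac{1}{L}\langle \nabla f(x_{i+1}),\; \tau_{x_i \to x_{i+1}} \nabla f(x_i) \rangle \;\ge\; \frac{1}{L}\|\nabla f(x_{i+1})\|^2,
\]
where the second inequality is the same co-coercivity consequence used to prove monotonicity. Combined with the standard descent lemma $f(x_{i+1}) \le f(x_i) - \frac{1}{2L}\|\nabla f(x_i)\|^2$ (which ensures $f(x_i) \to -\infty$, hence $d(x_0, x_i) \to \infty$) and the triangle-inequality bound $d(x_0, x_i) \le \frac{1}{L}\sum_{j=0}^{i-1}\|\nabla f(x_j)\|$, this yields
\[
\frac{f(x_i) - f(x_0)}{d(x_0, x_i)} \;\le\; -\,\frac{\sum_{j=1}^{i}\|\nabla f(x_j)\|^2}{\sum_{j=0}^{i-1}\|\nabla f(x_j)\|}.
\]
Since $a_j := \|\nabla f(x_j)\|$ is nonincreasing with $a_j \ge \kappa^* > 0$, both sums diverge and the right-hand side converges to $-\kappa$ as $i \to \infty$ by Stolz--Ces\`{a}ro (the difference quotient is $\|\nabla f(x_{i+1})\|^2/\|\nabla f(x_i)\| \to \kappa$); this is the discrete counterpart of the Cauchy--Schwarz step in the continuous proof.

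With these estimates in place I would repeat the geometric argument from the proof of Theorem~\ref{thm:continuous_main}: define $u_i \in S_{x_0}$ by $x_i = \exp_{x_0} d(x_0, x_i)\, u_i$, apply convexity of $f$ along the geodesic from $x_0$ to $x_i$ to obtain
\[
\frac{f(\exp_{x_0} s\, u_i) - f(x_0)}{s} \;\le\; \frac{f(x_i) - f(x_0)}{d(x_0, x_i)} \qquad (0 < s \le d(x_0, x_i)),
\]
pass to a subsequential limit $u_{i_k} \to u^*$ in the compact sphere $S_{x_0}$, let $s \to \infty$, and conclude $f^{\infty}(u^*) \le -\kappa$. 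Combining this with the weak duality $f^{\infty}(u^*) \ge -\kappa^*$ of Lemma~\ref{lem:weakduality} forces $\kappa = \kappa^*$ and identifies $u^*$ as a minimizer of $f^{\infty}$ over $M^{\infty}$. By the uniqueness assertion of Proposition~\ref{prop:f^infty}(2), every subsequential limit of $(u_i)$ equals $\xi^*$, whence compactness of $S_{x_0}$ yields $u_i \to \xi^*$; together with $d(x_0, x_i) \to \infty$ this gives convergence to $\xi^*$ in the cone topology, and the displayed min--max chain follows.

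The main technical obstacle is the Riemannian transfer of the Euclidean co-coercivity inequality for $L$-smooth convex functions to a one-step bound involving parallel transport along the geodesic segment from $x_i$ to $x_{i+1}$; on a Hadamard manifold this is available from the standard toolbox of geodesic-convex smoothness. Once verified, the continuous-time argument transcribes almost verbatim, with the only structural change being the replacement of the integral Cauchy--Schwarz step by its Stolz--Ces\`{a}ro discrete counterpart.
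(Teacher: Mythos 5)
Your proposal reproduces the paper's overall strategy faithfully: establish gradient-norm monotonicity and the one-step descent estimate $f(x_i)-f(x_{i+1})\ge\frac{1}{L}\|\nabla f(x_{i+1})\|^2$, deduce $f(x_i)\to-\infty$ and $d(x_0,x_i)\to\infty$, bound $\frac{f(x_i)-f(x_0)}{d(x_0,x_i)}$ by $-\frac{\sum\|\nabla f(x_j)\|^2}{\sum\|\nabla f(x_j)\|}$, extract a subsequential direction $u^*$ with $f^\infty(u^*)\le-\kappa$, and finish via weak duality and uniqueness. Your use of Stolz--Ces\`aro to show the ratio tends to $\kappa$ is a genuine and in fact cleaner alternative to the paper's Cauchy--Schwarz-plus-telescoping computation; both require only the monotonicity of $\|\nabla f(x_i)\|$, and your route avoids the extra bookkeeping term $\frac{\|\nabla f(x_0)\|^2}{\sum\|\nabla f(x_j)\|}$.

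The one place where you are too casual is the claim that the needed one-step inequalities ``carry over to Hadamard manifolds after transporting gradients parallelly'' as a consequence of an available manifold co-coercivity. The paper's Remark~\ref{rem:descend} explicitly states that while the Euclidean co-coercivity inequality $f(y)-f(x)\ge\langle\nabla f(x),y-x\rangle+\frac{1}{2L}\|\nabla f(x)-\nabla f(y)\|^2$ easily implies Lemma~\ref{lem:f(x_i)}, the authors \emph{do not know} a reasonable manifold version of this inequality. The paper instead proves Lemma~\ref{lem:f(x_i)} by a direct argument: writing
\[
\tau^{-1/L}_\gamma\nabla f(x_{i+1}) = L\int_0^{1/L}\left(I-\tfrac{1}{L}\tau^{-s}_\gamma\nabla^2 f(\gamma(s))\tau^s_\gamma\right)\nabla f(x_i)\,ds,
\]
bounding the operator norm of the integrand by $1$ (for monotonicity) and by bounding $\|\tau^{-1/L}_\gamma\nabla f(x_{i+1})-\frac12\nabla f(x_i)\|\le\frac12\|\nabla f(x_i)\|$ to get $\|\nabla f(x_{i+1})\|^2\le\langle\nabla f(x_{i+1}),\tau^{1/L}_\gamma\nabla f(x_i)\rangle$, then combining with geodesic convexity. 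So you have correctly identified \emph{what} must be proved, but the route you sketch (black-box manifold co-coercivity) is precisely the step the authors flag as unavailable; the lemma requires the integral/Hessian argument above. Once you substitute that argument for the co-coercivity appeal, your proof is sound.
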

Our original attempt proving this was to 
establish the contraction property
\begin{equation}\label{eqn:contraction_D}
d(\phi_i(x),\phi_i(y)) \leq d(x,y) \quad (x,y \in M, i=1,2,\ldots),
\end{equation}
for the semigroup $\phi_i$ of (\ref{eqn:DGR}), 
and to apply the approach of~\cite{CapraceLytchak2010, KarlssonMargulis1999}. 
However, we were unable to do so, and we do not know whether (\ref{eqn:contraction_D}) is true.
Note that (\ref{eqn:contraction_D}) is true in Euclidean space $M = \RR^n$; 
see e.g., \cite[Example 1]{SanzSernaZygalakis2020}.

The proof goes a way analogous to Theorem~\ref{thm:continuous_main}.
Corresponding to Lemma~\ref{lem:GR-proerties}, the following properties hold.
\begin{Lem}\label{lem:f(x_i)}
	\begin{itemize}
		\item[(1)] $\displaystyle f(x_{i+1}) \leq f(x_i) - \frac{1}{L} \| \nabla f(x_{i+1})\|^2$.
		\item[(2)] $\|\nabla f(x_{i+1})\| \leq \|\nabla f(x_{i})\|$.
	\end{itemize}
\end{Lem}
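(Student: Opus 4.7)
Let $\gamma : [0,1] \to M$ be the geodesic joining $x_i$ to $x_{i+1}$, so that $\dot\gamma(0) = -\nabla f(x_i)/L$ and, via parallel transport along $\gamma$, $\dot\gamma(1) = -G/L$, where $G := \tau_{x_i \to x_{i+1}}\nabla f(x_i)$ satisfies $\|G\| = \|\nabla f(x_i)\|$. My plan is to follow the classical Euclidean gradient-descent analysis, with parallel transport in place of vector differences. Setting $h(t) := f(\gamma(t))$, geodesic convexity of $f$ along $\gamma$ (the tangent-line inequality at $t=1$ evaluated at $t=0$ reads $h(0) \geq h(1) - h'(1)$) yields
\[
f(x_{i+1}) \ \leq \ f(x_i) - \tfrac{1}{L}\langle \nabla f(x_{i+1}), G\rangle. \qquad (\star)
\]
So it suffices to establish the Hadamard co-coercivity
\[
\langle \nabla f(x_{i+1}), G\rangle \ \geq \ \|\nabla f(x_{i+1})\|^2. \qquad (\ast)
\]
Indeed, $(\ast)$ inserted in $(\star)$ yields (1), while combining $(\ast)$ with the Cauchy--Schwartz bound $\langle \nabla f(x_{i+1}), G\rangle \leq \|\nabla f(x_{i+1})\|\,\|\nabla f(x_i)\|$ yields (2).

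To prove $(\ast)$, the plan is to establish the Hadamard-manifold analogue of the Baillon--Haddad inequality: for all $x, y \in M$,
\[
f(y) - f(x) - \langle \nabla f(x), \log_x y\rangle \ \geq \ \tfrac{1}{2L}\bigl\|\tau_{y \to x}\nabla f(y) - \nabla f(x)\bigr\|^2,
\]
and then apply it at both orientations $(x,y) = (x_i, x_{i+1})$ and $(x,y) = (x_{i+1}, x_i)$ and sum. After substituting $\log_{x_i} x_{i+1} = -\nabla f(x_i)/L$, $\log_{x_{i+1}} x_i = G/L$, and expanding $\|\nabla f(x_{i+1}) - G\|^2 = \|\nabla f(x_{i+1})\|^2 - 2\langle \nabla f(x_{i+1}), G\rangle + \|\nabla f(x_i)\|^2$, the $\|\nabla f(x_i)\|^2$ contributions cancel cleanly and $(\ast)$ falls out.

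The main obstacle is the manifold Baillon--Haddad inequality itself. In Euclidean space it follows by applying the descent lemma to the auxiliary $L$-smooth convex function $\phi_x(z) := f(z) - \langle \nabla f(x), z - x\rangle$, which is minimized at $z = x$. On a Hadamard manifold the natural candidate $z \mapsto f(z) - \langle \nabla f(x), \log_x z\rangle$ is generally not geodesically convex (the linear form on $T_xM$ does not pull back to a convex function on $M$ via $\exp_x^{-1}$), so the Euclidean template fails. The strategy I would try is to restrict to the geodesic from $x$ to $y$ and apply the $1$D descent lemma to the auxiliary function $t \mapsto h(t) - h(T) - h'(T)(t-T)$ (a convex non-negative function attaining $0$ at $t = T$); this yields the estimate with $\langle \tau_{y \to x}\nabla f(y) - \nabla f(x), \dot\gamma(0)/\|\dot\gamma(0)\|\rangle^2$ on the right-hand side. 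Promoting this $1$D projection bound to a full-norm bound requires exploiting the CAT(0) structure via Rauch-type comparison estimates on Jacobi fields, so as to control the components of $\tau_{y \to x}\nabla f(y) - \nabla f(x)$ orthogonal to $\dot\gamma(0)$ while preserving the sharp constant $\tfrac{1}{2L}$: the bare Lipschitz-gradient bound $\|\tau_{y \to x}\nabla f(y) - \nabla f(x)\| \leq L\,d(x,y)$ only gives $\tfrac{1}{4L}$, which would degrade (1) to coefficient $\tfrac{1}{2L}$, so obtaining the tight constant is the key technical point.
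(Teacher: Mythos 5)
Your reduction to the key inequality
\[
(\ast)\qquad \langle \nabla f(x_{i+1}), \tau_{x_i \to x_{i+1}}\nabla f(x_i)\rangle \ \geq \ \|\nabla f(x_{i+1})\|^2
\]
is sound, and your derivations of (1) from $(\ast)$ via geodesic convexity and of (2) from $(\ast)$ via Cauchy--Schwarz are both correct; this matches the skeleton of the paper's argument (the paper derives (1) from exactly this inequality, though it establishes (2) by a separate direct estimate rather than through $(\ast)$).

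However, your proof of $(\ast)$ has a genuine gap. You plan to obtain it from a Hadamard-manifold Baillon--Haddad (co-coercivity) inequality, and you do not actually establish that inequality: you sketch the one-dimensional descent-lemma bound along the geodesic and then say the resulting projection estimate must be upgraded to a full-norm bound with sharp constant $1/(2L)$ by Rauch/Jacobi-field comparison, but you leave that upgrade entirely open. This is exactly the obstruction the paper itself points to: Remark~\ref{rem:descend} states that for the Euclidean Baillon--Haddad inequality $f(y)-f(x) \ge \langle\nabla f(x),y-x\rangle + \tfrac{1}{2L}\|\nabla f(x)-\nabla f(y)\|^2$ the authors ``do not know a reasonable manifold version to hold.'' Moreover, the one-dimensional version you do have, applied in both orientations and summed, gives only
\[
\langle v, w\rangle \ \geq \ \frac{1}{L\,\|w\|^2}\,\langle v,w\rangle^2, \qquad v := \tau_{y\to x}\nabla f(y) - \nabla f(x),\ \ w := \log_x y,
\]
which, specialized to $(x,y)=(x_i,x_{i+1})$ with $w = -\nabla f(x_i)/L$, reduces to the constraint $0 \leq \langle \nabla f(x_{i+1}), \tau_{x_i\to x_{i+1}}\nabla f(x_i)\rangle \leq \|\nabla f(x_i)\|^2$ and does not imply $(\ast)$. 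So as written the proof does not go through.

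The paper's actual proof of $(\ast)$ avoids Baillon--Haddad altogether and is more elementary. Writing $\gamma(s) := \exp_{x_i}(-s\nabla f(x_i))$ and integrating the covariant derivative of $\tau_\gamma^{-s}\nabla f(\gamma(s))$ over $s\in[0,1/L]$ gives
\[
\tau^{-1/L}_\gamma \nabla f(x_{i+1}) = L \int_0^{1/L} \Bigl(I -  \tfrac{1}{L}\,\tau^{-s}_\gamma \nabla^2 f(\gamma(s)) \tau^s_\gamma \Bigr)  \nabla f(x_i)\, ds.
\]
Since convexity and $L$-smoothness put the spectrum of $\tau^{-s}_\gamma \nabla^2 f(\gamma(s)) \tau^s_\gamma$ in $[0,L]$, the operator $\tfrac12 I - \tfrac{1}{L}\tau^{-s}_\gamma \nabla^2 f(\gamma(s)) \tau^s_\gamma$ has operator norm at most $\tfrac12$, hence
\[
\bigl\| \tau^{-1/L}_\gamma \nabla f(x_{i+1}) - \tfrac12 \nabla f(x_i)\bigr\| \ \leq\ \tfrac12 \|\nabla f(x_i)\|.
\]
Squaring this and canceling $\tfrac14\|\nabla f(x_i)\|^2$ gives $\|\tau^{-1/L}_\gamma \nabla f(x_{i+1})\|^2 \leq \langle \tau^{-1/L}_\gamma \nabla f(x_{i+1}), \nabla f(x_i)\rangle$, which is $(\ast)$. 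No curvature comparison is used; only the spectral bound on the transported Hessian. You would be better served replacing your Baillon--Haddad step with this calculation.
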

Contrary to the well-known inequality 
$f(x_{i+1}) \leq f(x_i) - (1/2L) \| \nabla f(x_{i})\|^2$ (see \cite[(11.15)]{Boumal_Book}),  
our inequality (1) seems less well-known; 
see Remark~\ref{rem:descend} for further discussion.

\begin{proof}
	(2). Let $\gamma(t) := \exp_{x_i} -t \nabla f(x_i)$. Then we have
	\begin{eqnarray}
		\tau^{-1/L}_\gamma \nabla f(x_{i+1})  &=& \nabla f(x_i) + \int_0^{1/L} \frac{d}{ds} \tau^{-s}_\gamma \nabla f(\gamma(s)) ds 
		\nonumber \\
		&=& \nabla f(x_i) + \int_0^{1/L} \tau^{-s}_\gamma \nabla_{\dot \gamma(s)}\nabla f(\gamma(s)) ds \nonumber \\
		&=& \nabla f(x_i) + \int_0^{1/L} \tau^{-s}_\gamma \nabla^2 f(\gamma(s)) \dot \gamma(s) ds \nonumber \\
		&=& L \int_0^{1/L} \tau^{-s}_\gamma \left(I -  \frac{1}{L} \nabla^2 f(\gamma(s)) \right) \tau^s_\gamma  \nabla f(x_i) ds, \label{eqn:nablaf(x_i+1)}
	\end{eqnarray}
	where we use the definition (\ref{eqn:Hessian}) of $\nabla^2$
    and 
    $\dot \gamma(s) = \tau^s_\gamma \dot \gamma(0) = - \tau^s_\gamma \nabla f(x_i)$ as $\gamma$ is a geodesic.
 Since $\langle, \rangle$ is invariant under parallel transport, 
 the operator norm of $\tau^{-s}_\gamma (I - (1/L)\nabla^2 f(\gamma(s)) )\tau^s_\gamma$ is equal to that of $I - (1/L)\nabla^2 f(\gamma(s))$.
	By convexity and $L$-smoothness, all eigenvalues of $\nabla^2 f(\gamma(s))$ belong to $[0,L]$.
	Hence we have
	\begin{eqnarray*}
		 \|\nabla f(x_{i+1})\| &=& \|\tau^{-1/L}_\gamma \nabla f(x_{i+1})\| \leq  
		L \int_0^{1/L} \left\|  I - \frac{1}{L} \nabla^2 f(\gamma(s)) \right\| \|\nabla f(x_i)\| ds\\
		& \leq & \|\nabla f(x_i)\|,
	\end{eqnarray*}
    which proves (2).
    
	We now prove (1). From (\ref{eqn:nablaf(x_i+1)}), we have
	\begin{eqnarray*}
		 &&\left\| \tau^{-1/L}_\gamma \nabla f(x_{i+1}) - \frac{1}{2} \nabla f(x_i) \right\|  =  L \left\| \int_0^{1/L} \tau^{-s}_\gamma \left(\frac{1}{2}I -  \frac{1}{L} \nabla^2 f(\gamma(s)) \right) \tau^s_\gamma  \nabla f(x_i) ds \right\| \\
		&& \leq L  \int_0^{1/L} \left\| \left(\frac{1}{2}I -  \frac{1}{L} \nabla^2 f(\gamma(s)) \right)\right\|  \left\| \nabla f(x_i) \right\| ds  \leq \frac{1}{2} \left\| \nabla f(x_i) \right\|. 
	\end{eqnarray*}
    By squaring this and applying the rearrangement $\|a-b\|^2\leq \|b\|^2 \Rightarrow \|a\|^2 \leq 2 \langle a,b\rangle$,
	we have $\|\tau^{-1/L}_\gamma \nabla f(x_{i+1})\|^2 \leq \langle \tau^{-1/L}_\gamma \nabla f(x_{i+1}), \nabla f(x_i) \rangle$, particularly, 
	\begin{equation}
        \label{eqn:next_gradient}
    	\|\nabla f(x_{i+1}) \|^2  \leq \langle \nabla f(x_{i+1}), \tau^{1/L}_\gamma \nabla f(x_i) \rangle. 
	\end{equation}
	From convexity, it holds
	\begin{eqnarray*}
		&& f(x_i) \geq  f(x_{i+1}) + \frac{1}{L} \frac{d}{dt}  f(\gamma(1/L - t))\mid_{t=0} 
		=  f(x_{i+1}) - \frac{1}{L} \langle \nabla f(x_{i+1}), \dot \gamma(1/L) \rangle \\
		&& =  f(x_{i+1}) + \frac{1}{L} \langle \nabla f(x_{i+1}), \tau^{1/L}_\gamma \nabla f(x_i) \rangle \geq f(x_{i+1}) + \frac{1}{L} \|\nabla f(x_{i+1})\|^2,
	\end{eqnarray*}
    where we use (\ref{eqn:next_gradient}) for the last inequality.
\end{proof}

\begin{proof}[Proof of Theorem~\ref{thm:discrete_main}]
The proof is similar to that of Theorem~\ref{thm:continuous_main}.
Let $\kappa := \lim_{i \to \infty} \|\nabla f(x_i)\| \geq \kappa^*$.
	For $i > 0$, we have
	\begin{eqnarray}
	f(x_i) - f(x_0) &\le& -\frac1L\sum_{k = 1}^i\|\nabla f(x_k)\|^2 \leq - \frac{i}{L} \kappa, \label{eqn:d_note1}\\
 d(x_i, x_0) &\le& \sum_{k=0}^{i-1} d(x_k,x_{k+1}) = \frac1L\sum_{k = 0}^{i-1}\|\nabla f(x_k)\|, \label{eqn:d_note2}
	\end{eqnarray}
 where (\ref{eqn:d_note1}) follows from Lemma~\ref{lem:f(x_i)} 
 and (\ref{eqn:d_note2}) follows from
	the triangle inequality and $d(x,\exp_x u) = \|u\|$ with (\ref{eqn:DGR}). 
	Then $d(x_i, x_0) \to \infty$ is shown as in the proof of Theorem~\ref{thm:continuous_main}.
 
	Let $u_{i} \in S_{x_0}$ be defined via 
	$
	x_i = \exp_{x_0} d(x_i,x_0) u_i.
	$
	For $s \in (0, d(x_i, x_0)]$, by convexity of $f$ along geodesic $s \mapsto \exp_{x_0}s u_i$, it holds
	\begin{equation*}\label{eqn:u_i}
	f(\exp_{x_0} s u_i) - f(x_0)
	\le \frac{s}{d(x_i, x_0)}(f(x_i) - f(x_0)).
	\end{equation*}
	  From this, we have
	\begin{eqnarray}
	&& \frac{f(\exp_{x_0} s u_i) - f(x_0)}{s}
	\leq \frac{f(x_i) - f(x_0)}{d(x_i, x_0)}
	\leq \frac{-\sum_{k = 1}^i\|\nabla f(x_k)\|^2}{d(x_i, x_0)}
    \leq \frac{-\sum_{k = 1}^i\|\nabla f(x_k)\|^2}{\sum_{k = 0}^{i-1}\|\nabla f(x_k)\|}
    \notag\\
	&&= -\frac{\sum_{k = 0}^{i - 1}\|\nabla f(x_k)\|^2}{\sum_{k = 0}^{i-1}\|\nabla f(x_k)\|} + \frac{\|\nabla f(x_0)\|^2 - \|\nabla f(x_i)\|^2}{\sum_{k = 0}^{i-1}\|\nabla f(x_k)\|}\notag\\
	&&\leq -\frac1i\sum_{k = 0}^{i-1}\|\nabla f(x_k)\| + \frac{\|\nabla f(x_0)\|^2}{\sum_{k = 0}^{i-1}\|\nabla f(x_k)\|} \leq - \kappa + \frac{1}{i} \frac{\|\nabla f(x_0)\|^2}{\kappa}, \label{eqn:second_term}
	\end{eqnarray}
where the second inequality follows from (\ref{eqn:d_note1}), the third from (\ref{eqn:d_note2}) 
and the negativity of the numerator, 
the fourth from the Cauchy-Schwarz inequality
$(\sum_k F_kG_k)^2 \leq \sum_k F_k^2 \sum_k G_k^2$, and the fifth from Lemma~\ref{lem:f(x_i)}~(2). 

Choose any convergent subsequence $\{u_{i_k}\}$ of $\{u_i\}$, 
	which converges to $u^* \in S_{x_0}$.
 The second term of (\ref{eqn:second_term}) vanishes as $i_k \to \infty$. Then it holds
	\begin{equation*}
	\frac{f(\exp_{x_0} s u^{\ast}) - f(x_0)}{s} \le -\kappa.
	\end{equation*}
	By $s \to \infty$, we have
	$
	f^{\infty}(u^*) \le -\kappa.
	$
	The rest is the same as the last part of the proof of Theorem~\ref{thm:continuous_main}.
\end{proof}

We note the limiting behavior of the decrement of $f(x_i)$ and the change of $\nabla f(x_i)$. 
\begin{Lem}\label{lem:f(x_i+1)-f(x_i)}
\begin{itemize}
        \item[(1)] $\displaystyle \lim_{i \to \infty} f(x_{i+1})- f(x_i) = - \frac{(\kappa^*)^2}{L}$.
\item[(2)] $\displaystyle \lim_{i \to \infty} \| \tau_{x_{i} \to x_{i+1}}\nabla f(x_{i}) - \nabla f(x_{i+1})\| = 0$.
\end{itemize}
\end{Lem}

\begin{proof}
(1). By convexity and Lemma~\ref{lem:f(x_i)}~(1), we have
\[
- \frac{1}{L}\|\nabla f(x_i)\|^2 \leq f(x_{i+1}) -f(x_i) \leq - \frac{1}{L}\|\nabla f(x_{i+1})\|^2. 
\]
By $i \to \infty$ with Theorem~\ref{thm:discrete_main}, we have the claim. 

(2). The inequality (\ref{eqn:next_gradient}) is also written as
   \[
        \|\nabla f(x_{i + 1})\|^2 \le \|\nabla f(x_i)\|\|\nabla f(x_{i + 1})\|\cos \angle(\nabla f(x_{i + 1}), \tau_{x_i \to x_{i+1}}\nabla f(x_i)).
    \]
    By $\|\nabla f(x_i)\| \to \kappa^\ast$, we have 
    $\angle(\nabla f(x_{i + 1}), \tau_{x_i \to x_{i+1}}\nabla f(x_i)) \to 0$, and the claim follows.
\end{proof}

The discrete version of Proposition~\ref{prop:velocity_of_escape} is the following.
\begin{Prop}\label{prop:velocity_of_escape_D}
Suppose that $\kappa^* := \inf_{x \in M}\|\nabla f(x)\| > 0$.
Let $\xi^* \in S_{x_0}$ denote the representative of the unique minimizer of $f^{\infty}$ over $M^{\infty} \simeq S_{x_0}$.
\begin{itemize}
\item[(1)] $\displaystyle \lim_{i \to \infty}\frac{d(x_0,x_i)}{i} = \frac{\kappa^*}{L}$.
\item[(2)] $\displaystyle \lim_{i \to \infty}\frac{\exp_{x_0}^{-1}x_i}{i} = \frac{\kappa^*\xi^*}{L}$.
\end{itemize}
\end{Prop}

\begin{proof}
(1). As in (\ref{eqn:d_note2}), it holds
$d(x_0, x_i) \leq \sum_{k = 0}^{i - 1}d(x_k, x_{k + 1}) = \sum_{k = 0}^{i - 1}\|\nabla f(x_k)\|/L$.
Hence, with (\ref{eqn:exercise1}) for $a_i := \|\nabla f(x_i)\|$, we have
\begin{equation}\label{eqn:limsup<=kappa*/L}
\limsup_{i \to \infty} \frac{d(x_0, x_i)}{i} \leq  \frac{1}{L} \limsup_{i \to \infty} \frac{1}{i}\sum_{k = 0}^{i - 1}\|\nabla f(x_k)\| \leq \frac{1}{L}\limsup_{k \to \infty} \|\nabla f(x_k)\| = \frac{\kappa^\ast}{L}.
\end{equation}
On the other hand, for arbitrary $0 \leq i < j$, we have
\begin{eqnarray*}
&& -\frac{j - i}{L}\|\nabla f(x_j)\|^2 \geq -\frac{1}{L}\sum_{k = i + 1}^j\|\nabla f(x_k)\|^2 \geq (f(x_j) - f(x_i))\\
&& \geq \langle\dot\gamma(0), \nabla f(x_i)\rangle d(x_i, x_j) \geq -\|\nabla f(x_i)\|d(x_i, x_j),    
\end{eqnarray*}
where the first inequality follows from Lemma~\ref{lem:f(x_i)}~(2), the second from Lemma~\ref{lem:f(x_i)}~(1), 
and the third from the convexity of $f$ along unit-speed geodesic $\gamma$ from $x_i$ to $x_j$.
Thus, for arbitrary $i \ge 0$, it holds
\begin{eqnarray}
&& \liminf_{j \to \infty}\frac{d(x_0, x_j)}{j} \geq \liminf_{j \to \infty}\frac{d(x_i, x_j) - d(x_0, x_i)}{j} = \liminf_{j \to \infty}\frac{d(x_i, x_j)}{j - i} \nonumber \\
&& \ge \frac{1}{L}\liminf_{j \to \infty}\frac{\|\nabla f(x_j)\|^2}{\|\nabla f(x_i)\|} = \frac{1}{L}\frac{(\kappa^\ast)^2}{\|\nabla f(x_i)\|} \quad \underset{i \to \infty}{\longrightarrow} \frac{\kappa^\ast}{L}. \label{eqn:liminf=>kappa*/L}
\end{eqnarray}
By (\ref{eqn:limsup<=kappa*/L}) and (\ref{eqn:liminf=>kappa*/L}), we have
\[
\frac{\kappa^*}{L} \leq \liminf_{i \to \infty} \frac{d(x_0,x_i)}{i} \leq \limsup_{i \to \infty} \frac{d(x_0,x_i)}{i} \leq \frac{\kappa^*}{L}.
\]
(2). As in the proof of Proposition~\ref{prop:velocity_of_escape}~(2), by Theorem~\ref{thm:discrete_main} and  the above (1), we have
\[
\lim_{i \to \infty}\frac{\exp_{x_0}^{-1}x_i}{i} = \lim_{i \to \infty}\frac{\exp_{x_0}^{-1}x_i}{d(x_0,x_i)} \frac{d(x_0,x_i)} {i} = \frac{\kappa^*\xi^*}{L}. \qedhere 
\]
\end{proof}

For convergence of $\nabla f(x_i)$, the same property of Proposition~\ref{prop:accumulation_of_transported_gradient} holds: 
\begin{Prop}\label{prop:accumulation_of_transported_gradient_D}
Suppose that $\kappa^* := \inf_{x \in M} \|\nabla f(x)\| > 0$.
Let $\xi^* \in S_{x_0}$ denote the representative of the unique minimizer of $f^{\infty}$ over $M^{\infty} \simeq S_{x_0}$. Then it holds
\begin{equation*}
   \liminf_{i \to \infty} \|\tau_{x_i \to x_0} \nabla f(x_i) + \kappa^* \xi^*\|  = 0.
\end{equation*}
\end{Prop}
\begin{Question}\label{que:convergence_D}
    Does $\lim_{i \to \infty} \tau_{x_i \to x_0} \nabla f(x_i) = -  \kappa^* \xi^*$
    hold ?
\end{Question}
\begin{proof}[Proof of Proposition~\ref{prop:accumulation_of_transported_gradient_D}]
Let $d_i := d(x_0,x_i)$. We first show 
\begin{equation}\label{eqn:d_i+1-d_i}
 \limsup_{i \to \infty} d_{i + 1} - d_i = \kappa^\ast/L. 
\end{equation}
Indeed, 
by the triangle inequality and Theorem~\ref{thm:discrete_main}~(1), we have
    $
        \limsup_{i \to \infty} d_{i + 1} - d_i \le \limsup_{i \to \infty} d(x_i, x_{i + 1}) = \limsup_{i \to \infty} \|\nabla f(x_i)\|/L = \kappa^\ast/L
    $.
 On the other hand, 
    by Proposition \ref{prop:velocity_of_escape_D}~(3), it holds
    $\kappa^*/L= \limsup_{i \to \infty}  d_i/i \leq \limsup_{i \to \infty} d_{i + 1} - d_i$, 
    where the inequality follows from (\ref{eqn:exercise1}) for $a_i := d_{i+1}-d_i$.

    Consider the geodesic triangle of vertices $x_0,x_{i-1}, x_i$. 
    Let $\gamma_i$ denote the unit-speed geodesic from $x_0$ to $x_i$.
    Let $\theta_i$ denote the angle at vertex $x_i$ of this triangle. Then 
    \[
    \theta_i = \angle (\dot \gamma_i(d_i), -\tau_{x_{i-1}\to x_i} \nabla f(x_{i-1})). 
    \]
   By the law of cosines in CAT(0) space $M$ (see e.g., \cite[II.1.9 (2)]{BridsonHaefliger1999}), we have
      \begin{equation*}
        \cos  \theta_i 
        \ge \frac{d_i^2 + d(x_{i - 1}, x_i)^2 - d_{i - 1}^2}{2d_id(x_{i - 1}, x_i)}
        = \frac{d(x_{i - 1}, x_i)}{2d_i} + \frac12\left(1 + \frac{d_{i - 1}}{d_i}\right)\frac{d_i - d_{i - 1}}{d(x_{i - 1}, x_i)}.
    \end{equation*}
    Take $\limsup_{i \to \infty}$ in this inequality. 
    By $d_i =d(x_0,x_i) \to \infty$, $d(x_{i-1},x_i) = \|\nabla f (x_{i-1})\|/L \to \kappa^*/L$ (from Theorem~\ref{thm:discrete_main}~(1)), 
    $d_{i-1}/d_i \to 1$ (seen from Proposition~\ref{prop:velocity_of_escape_D}~(1)), and (\ref{eqn:d_i+1-d_i}), we have
    $
    \limsup_{i \to \infty} \cos \theta_i \geq 1
    $, and 
    $
    \liminf_{i \to \infty} \theta_i = 0.
    $
    By Lemma~\ref{lem:f(x_i+1)-f(x_i)} (2), 
    it holds $\angle (\nabla f(x_{i}), \tau_{x_{i-1} \to x_i} \nabla f(x_{i-1})) \to 0$ and
     \[
    \limsup_{i \to \infty} \angle (\dot \gamma_i(d_i), \nabla f(x_{i})) = \pi. 
    \]
    By taking parallel transport $\tau_{x_i \to x_0}$ and $\dot \gamma_i(0) \to \xi^*$, we have the claim.
\end{proof}

\begin{Rem}\label{rem:proximal}
Another type of discrete gradient flow, well-studied in the literature of nonpositively-curved space (see \cite{Bacak_Book2014,Mayer1998,OhtaPalfia2015}), 
is defined via the {\em resolvent map} 
$J_\lambda^f:M \to M$, 
\begin{equation}\label{eqn:resolvent}
J_\lambda^f(x) := \argmin_{y \in M} f(y) + \frac{1}{2\lambda}d(x,y)^2 \quad (x \in M), 
\end{equation}
where $\lambda$ is a positive parameter.
Let $\lambda_i$ be a sequence of positive reals 
(satisfying $\lambda_i \to 0$ and $\sum_i \lambda_i \to \infty$). 
Then a discrete analogue ({\em proximal point method}) of gradient flow is as follows: 
\begin{equation}\label{eqn:proximal}
x_{i+1} = J_{\lambda_i}^f(x_{i}) \quad (i= 0,1,\ldots).
\end{equation}
For our manifold case, it can be written as 
an implicit difference scheme:
\begin{equation}
x_i =  \exp_{x_{i+1}} \lambda_i \nabla f(x_{i+1}). 
\end{equation}
Several nice (convergence) properties are known 
for the sequence of (\ref{eqn:proximal}).
For example, the contraction property (\ref{eqn:contraction_D}) holds for the semigroup of (\ref{eqn:proximal}); 
see \cite[Theorem 2.2.23]{Bacak_Book2014}. 
On the other hand, solving (\ref{eqn:resolvent}) is a nontrivial task from an algorithmic point of view.
\end{Rem}    

\begin{Rem}\label{rem:descend}
In the case of $M = \RR^n$, Lemma~\ref{lem:f(x_i)}~(1) can be easily obtained from a known inequality.
For an $L$-smooth convex function $f$ in $\RR^n$, the following inequality holds (e.g., \cite[Theorem~5.8 (iii)]{Amir_Book}):
\begin{equation*}
    f(y) - f(x) \ge \langle\nabla f(x), y - x\rangle + \frac{1}{2L}\|\nabla f(x) - \nabla f(y)\|^2\quad (x, y \in \RR^n),
\end{equation*}
though we do not know a reasonable manifold version to hold.
By substituting $x = x_{i + 1}, y = x_i$, 
and using $x_i - x_{i + 1} = \nabla f(x_i)/L$ and $\|\nabla f(x_{i})\| \geq  \|\nabla f(x_{i + 1})\|$ (Lemma~\ref{lem:f(x_i)}~(2)), we have Lemma~\ref{lem:f(x_i)}~(1):
\[
    f(x_{i + 1}) \le f(x_i) - \frac{1}{2L}\left(\|\nabla f(x_{i})\|^2 + \|\nabla f(x_{i + 1})\|^2\right) \leq f(x_i) - \frac{1}{L}  \|\nabla f(x_{i + 1})\|^2.
\]
\end{Rem}

\subsection{Euclidean specialization}\label{subsec:Euclidean}
Here, we present refinements of the above results for 
the Euclidean setting $M = \RR^n$.
As far as our knowledge, 
the above convergence results 
on the gradient flow/descent
seem new even in this special case, and are further sharpened as follows.
In the Euclidean space $M = \RR^n$, 
the tangent space $T_x$ is also identified with $\RR^n$ for every $x \in M$,
where the inner product is given by $\langle u,v \rangle := u^{\top}v$.
The parallel transport $\tau_\gamma$ for any path $\gamma$ 
is the identity map. 
Let $f:\RR^n \to \RR$ be a (smooth) convex function.
We assume $L$-smoothness of $f$ when gradient descent (\ref{eqn:DGR}) is considered.
The gradient $\nabla f(x) \in \RR^n$ 
and Hessian $\nabla^2 f(x) \in \RR^{n \times n}$ are 
obtained by $(\nabla f(x))_i = (\partial/\partial x_i) f(x)$ and
$(\nabla^2 f(x))_{ij} = (\partial^2/\partial x_i \partial x_j) f(x)$, respectively.

In this setting, the strong duality (Corollary~\ref{cor:duality}) is written as
\begin{equation}\label{eqn:duality_Euclidean}
\inf_{p \in \overline{\nabla f(\RR^n)}} \|p\| = \sup_{u \in \RR^n: \|u\| \leq 1} - f^{\infty}(u),
\end{equation}
where $\overline{\nabla f(\RR^n)}$ is the closure of the gradient image
$\nabla f(\RR^n) = \{ \nabla f(x) \mid x \in \RR^n\}$.
This relation itself is deduced within Euclidean convex analysis as follows.
Let $f^*:\RR^n \to \RR \cup \{\infty\}$ be the Legendre-Fenchel conjugate of $f$:
\[
f^*(p) := \sup \{\langle p,x\rangle - f(x) \mid x \in \RR^n\} \quad (p \in \RR^n).
\]
Then, the gradient space $\overline{\nabla f(\RR^n)}$ is equal to the closure $\overline{\dom f^*}$ of the domain $\dom f^* := \{p \in \RR^n \mid f^*(p) < \infty\}$ of $f^*$. Indeed, this is because $\nabla f(\RR^n) \subseteq \dom f^* \subseteq \overline{\nabla f(\RR^n)}$, where the first inclusion follows from $p = \nabla f(x) \Leftrightarrow f^*(p) = \langle p,x\rangle-f(x)$
and the second from $f^*(p) < \infty \Leftrightarrow \inf_{x \in \RR^n} f(x) - \langle p,x \rangle > -\infty \Rightarrow \inf_{x \in \RR^n} \|\nabla f(x) -p\| = 0$.
Also, it is known in convex analysis~\cite[Theorems 13.1 and 13.3]{Rockafellar}
that $f^{\infty}$ is equal to the support function of $\dom f^*$. Summarizing, it holds 
\begin{equation}\label{eqn:closure_nablaf(RR^n)}
\overline{\nabla f(\RR^n)} = \overline{\dom f^*} = \{ p \in \RR^n \mid \langle u,p\rangle \leq f^{\infty}(u) \ (u \in \RR^{n})  \}.
\end{equation}
In particular, the gradient space $\overline{\nabla f(\RR^n)}$ is (closed) convex.
Now,  the equality in (\ref{eqn:duality_Euclidean}) is attained by the (uniquely-determined) minimum-norm point $p^*$ of $\overline{\nabla f(\RR^n)}$ and its negative direction $-p^*/\|p^*\|$; see the proof of the next theorem. By Theorems~\ref{thm:continuous_main} and \ref{thm:discrete_main}, 
both $\nabla f(x(t))$ and $\nabla f(x_i)$ 
converge to $p^*$, 
and both $x(t)$ and $x_i$ converge to $- p^*/\|p^*\|$ in cone topology.
\begin{Thm}\label{thm:mnp}
Let $p^*$ denote the minimum-norm point of $\overline{\nabla f(\RR^n)}$.
Suppose that $\kappa^* := \inf_{x \in \RR^n} \|\nabla f(x)\| >0$.
\begin{itemize}
\item[(1)] $\nabla f(x(t))$ converges to $p^*$, and $x(t)/t$ converges to $-p^*$.
\item[(2)] $\nabla f(x_i)$ converges to $p^*$, and $x_i/i$ converges to $-p^*/L$. 
\end{itemize}
\end{Thm}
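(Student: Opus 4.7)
My plan is to read off $p^*$ from the representation
\[
\overline{\nabla f(\RR^n)} = \{p \in \RR^n \mid \langle u,p\rangle \leq f^{\infty}(u) \text{ for all } u \in \RR^n\}
\]
already stated in the excerpt. Since this set is closed and convex, the minimum-norm point $p^*$ exists and is unique. Because the norm is continuous, the infimum of $\|p\|$ over $\nabla f(\RR^n)$ equals the infimum over its closure, so $\|p^*\| = \kappa^*$, matching the strong duality of Corollary~\ref{cor:duality}. The strategy is then to show that the sequence of gradients along the orbit converges to $p^*$, and to deduce the vector statements by time-averaging using the identities $x(t) = x_0 - \int_0^t \nabla f(x(\tau))\,d\tau$ and $x_i = x_0 - \frac{1}{L}\sum_{k=0}^{i-1}\nabla f(x_k)$ that are special to the Euclidean setting.

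First I would establish $\|\nabla f(x(t))\| \to \kappa^*$ in all cases. For $\kappa^* > 0$ this is Theorem~\ref{thm:continuous_main}~(1). For $\kappa^* = 0$, monotonicity (Lemma~\ref{lem:GR-proerties}~(3)) produces a limit $\kappa \geq 0$; assuming $\kappa > 0$, the calculation in the proof of Theorem~\ref{thm:continuous_main}---specifically the bounds (\ref{eqn:note1})--(\ref{eqn:note2}) and the ensuing Cauchy--Schwarz step---forces $d(x(t),x_0) \to \infty$ and yields an accumulation point $u^* \in S_{x_0}$ of $u(t)$ with $f^{\infty}(u^*) \leq -\kappa < 0$. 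This contradicts $\kappa^* = 0$ via Proposition~\ref{prop:f^infty}~(1), so $\kappa = 0$.

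Once $\|\nabla f(x(t))\| \to \|p^*\|$ is known, the bounded sequence $\nabla f(x(t))$ (bounded by $\|\nabla f(x_0)\|$) has accumulation points, each of which lies in the closed set $\overline{\nabla f(\RR^n)}$ and has norm exactly $\|p^*\|$. Uniqueness of the minimum-norm point in a closed convex subset of a Hilbert space (immediate from strict convexity of $\|\cdot\|^2$) forces every such accumulation point to equal $p^*$, hence $\nabla f(x(t)) \to p^*$. Ces\`aro's theorem applied to $x(t)/t = x_0/t - (1/t)\int_0^t \nabla f(x(\tau))\,d\tau$ then gives $x(t)/t \to -p^*$, completing part~(1).

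The discrete part~(2) proceeds along the same lines: Lemma~\ref{lem:f(x_i)}~(2) supplies monotonicity and boundedness of $\|\nabla f(x_i)\|$; Theorem~\ref{thm:discrete_main}~(1) handles $\kappa^* > 0$, and for $\kappa^* = 0$ a contradiction-based argument patterned on the proof of Theorem~\ref{thm:discrete_main} (substituting Lemma~\ref{lem:f(x_i)}~(1) for (\ref{eqn:note1})) again rules out a positive limit of $\|\nabla f(x_i)\|$. The same uniqueness-of-min-norm argument yields $\nabla f(x_i) \to p^*$, and Ces\`aro averaging on $x_i/i = x_0/i - (1/(Li))\sum_{k=0}^{i-1}\nabla f(x_k)$ produces $x_i/i \to -p^*/L$. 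The main obstacle I anticipate is not depth but the need to separately treat the boundary case $\kappa^* = 0$, since Theorems~\ref{thm:continuous_main} and~\ref{thm:discrete_main} both take $\kappa^* > 0$ as a hypothesis; apart from this, the uniqueness of the min-norm point makes the remaining deductions entirely routine.
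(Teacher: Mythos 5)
Your proof is correct, and it is worth noting that it reaches the vector statements $x(t)/t \to -p^*$ and $x_i/i \to -p^*/L$ by a route that differs from the paper's. The paper's written proof identifies $\xi^* = -p^*/\|p^*\|$ via a supporting-hyperplane calculation at the minimum-norm point (showing $f^{\infty}(v) \geq \langle v,p^*\rangle \geq -\|p^*\| = f^{\infty}(-p^*/\|p^*\|)$ for unit $v$), and then reads off $x(t)/t \to \kappa^*\xi^* = -p^*$ from the cone-topology convergence of Theorem~\ref{thm:continuous_main}~(2) combined with the velocity-of-escape formula of Proposition~\ref{prop:velocity_of_escape}; the discrete part goes through Theorem~\ref{thm:discrete_main}~(2) and Proposition~\ref{prop:velocity_of_escape_D}~(3). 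You instead establish $\nabla f(x(t)) \to p^*$ first via the accumulation-point argument --- which the paper only alludes to in the paragraph preceding the statement --- and then obtain $x(t)/t \to -p^*$ by Ces\`aro averaging the Euclidean identity $x(t) - x_0 = -\int_0^t \nabla f(x(\tau))\,d\tau$ (and its discrete analogue). This bypasses the cone-topology convergence, the velocity of escape, and the identification of $\xi^*$ entirely: it needs only part~(1) of Theorems~\ref{thm:continuous_main} and~\ref{thm:discrete_main}, at the cost of not surfacing the geometric identity $\xi^* = -p^*/\|p^*\|$ that the paper's argument makes explicit. You also treat the case $\kappa^* = 0$, which the paper's supporting-hyperplane step tacitly assumes away (it divides by $\|p^*\|$); your patch --- running the Cauchy--Schwarz estimate from the proof of Theorem~\ref{thm:continuous_main} under the hypothetical limit $\kappa > 0$ and deriving a contradiction to $\kappa^* = 0$ via Proposition~\ref{prop:f^infty}~(1) --- is sound and genuinely needed if one wants the statement unconditionally.
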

\begin{proof}
It suffices to show the claims for $x(t)/t$ and $x_i/i$.
We first verify that the unique minimizer of $f^\infty$ over the unit sphere
is written as $-p^{*}/\|p^*\| =:u^*$.
Observe from the KKT-condition that $\{p \in \RR^n \mid \langle u^*,p\rangle = f^{\infty}(u^*)\}$ is a supporting hyperplane of $\overline{\nabla f(\RR^n)}$ at $p^*$.
Then, for any unit vector $v$, it holds $f^{\infty}(v) 
\geq \langle v,p^*\rangle \geq - \|p^*\| = \langle u^*,p^*\rangle  = f^{\infty}(u^*)$.
In particular, $p^*$ and $u^* = -p^*/\|p^*\|$ attain the equality in (\ref{eqn:duality_Euclidean}).

Then, by Theorem~\ref{thm:continuous_main}, we have $\lim_{t \to \infty}x(t) = - p^*/\|p^*\|$ ``in cone topology."
This implies that
\begin{equation}\label{eqn:-p*/|p*|}
\frac{-p^*}{\|p^*\|} = \lim_{t \to \infty} \frac{x(t)-x_0}{\|x(t)-x_0\|} = \lim_{t \to \infty} \frac{x(t)}{t}\frac{t}{d(x(t),x_0)} = \lim_{t \to \infty} \frac{x(t)}{t} \frac{1}{\|p^*\|},
\end{equation}
where the last equality follows from 
Proposition~\ref{prop:velocity_of_escape}
with $\|p^*\| = \lim_{t \to \infty} \|\nabla f(x(t))\| =\kappa^*$. Thus we have the latter part of (1).
The latter part of (2) is analogously shown by using  Theorem~\ref{thm:discrete_main} and Proposition~\ref{prop:velocity_of_escape_D} 
(for the sequence version of~(\ref{eqn:-p*/|p*|})).
\end{proof}
Since $-p^* = \kappa^*\xi^*$, the expected convergence in Questions~\ref{que:convergence} and \ref{que:convergence_D} hold in this case.
We end this section with other interesting aspects.
\paragraph{Hessian Riemannian gradient flow.}
Here we point out that 
the convergence of $\nabla f(x(t))$ to the minimum-norm point $p^*$ can also be explained 
via the theory of {\em Hessian Riemannian gradient flows} 
by Alvarez, Bolte, and Brahic~\cite{AlvarezBolteBrahic2004}.
Suppose for simplicity that the Hessian $\nabla^2 f(x)$ is nonsingular for every $x \in \RR^n$. 
Then, by the inverse mapping theorem applied to $x \mapsto \nabla f(x)$ (with the inverse $p \mapsto \nabla f^*(p)$),
we see that $\nabla f(\RR^n)$ is an open (convex) set. 

Consider the continuous gradient flow $x(t)$, 
and let $p(t) := \nabla f(x(t))$.
One more differentiation in (\ref{eqn:GR}) yields 
\begin{equation*}
\dot p(t) = - \nabla^2 f(x(t)) p(t).
\end{equation*}
From $\nabla^2 f(x(t)) = (\nabla^2 f^*(p(t)))^{-1}$, 
we have the following ODE obeyed by $p(t)$:
\begin{equation}\label{eqn;DE_p(t)}
\dot p(t) = - (\nabla^2 f^*(p(t)))^{-1} p(t), \quad p(0) = \nabla f(x_0).
\end{equation}
This can be interpreted as a gradient-flow ODE on a Riemannian manifold. 
Define a Riemannian metric $\langle, \rangle^{f}$ on open convex set 
$\nabla f(\RR^n)$ by
\begin{equation}
\langle u,v \rangle^f := \langle u, \nabla^2 f^{*}(p) v\rangle \quad (u,v \in T_p = \RR^n, p \in \nabla f(\RR^n)).
\end{equation}
In this metric, the gradient 
$\nabla^{f} g(p)$ of $g: \nabla f(\RR^n) \to \RR$ 
is given by $(\nabla^2 f^*(p))^{-1} \nabla g(p)$.
Then (\ref{eqn;DE_p(t)}) is viewed as
the gradient flow of the squared-norm function $p \mapsto \|p\|^2/2$:  
\begin{equation}\label{eqn:Hessian_GR}
\dot p(t) = - \nabla^f \frac{\|p(t)\|^2}{2}, \quad p(0) = \nabla f(x_0).
\end{equation}
This is a particular instance of Hessian Riemannian gradient flow in~\cite{AlvarezBolteBrahic2004}. 
Then, by \cite[Proposition 4.4]{AlvarezBolteBrahic2004}, 
the solution $p(t)$ of (\ref{eqn:Hessian_GR}) minimizes $\|p\|^2/2$ 
over $\overline{\nabla f(\RR^n)}$ in limit $t \to \infty$,
which proves $\lim_{t \to \infty} \nabla f(p(t)) = p^*$, 
the first part of Theorem~\ref{thm:mnp}~(1).

\paragraph{Mirror descent.}
On the other hand, 
the discrete version (Theorem~\ref{thm:mnp}~(2)) 
can be explained from the framework of {\em mirror descent}~\cite{NemirovskyYudin}, where 
we consult \cite[Chapter 4]{Bubeck15} for it.
Consider a general optimization problem 
\begin{equation}\label{eqn:general}
{\rm Min.}\ g(p) \quad {\rm s.t.} \quad p \in {\cal D},
\end{equation}
where $g$ is a differentiable convex function on an open convex set ${\cal D} \subseteq \RR^n$.
A {\em mirror map} $\Phi:{\cal D} \to \RR$ 
is a differentiable strictly convex function such that 
$\nabla \Phi:{\cal D} \to \RR^n$ is bijective and 
$\|\nabla \Phi(p)\| \to \infty$ if $p$ goes to the boundary of ${\cal D}$.
A basic form of mirror descent produces the sequence $p_1,p_2,\ldots$ in ${\cal D}$ 
according to the update
\begin{equation}\label{eqn:mirror}
\nabla \Phi (p_{i+1}) := \nabla \Phi(p_i) - \beta_i \nabla g(p_i),
\end{equation}
where $\beta_i > 0$ is a step size. 
It is well-known (see e.g., \cite[Section 7.4]{Vishnoi_Book}) that this update coincides with the {\em proximal gradient descent} relative to the {\em Bregman divergence} $D_{\Phi}(q,p) 
:= \Phi(q) - \Phi(p) - \langle \nabla \Phi(p),q-p \rangle$:
\begin{equation}
p_{i+1} \in \argmin_{p \in {\cal D}} \left\{ g(p_i) + \langle \nabla g(p_i), p - p_i \rangle + \frac{1}{\beta_i} D_{\Phi}(p,p_i) \right\}.
\end{equation}
Under several assumptions on $g, \Phi$, 
the solution $p_i$ (or the average solution $(1/i)\sum_{j=1}^i p_j$ or the best solution ever) is shown to converge to a minimizer of $g$;  
see e.g., \cite{LuFreundNestrov2018}, \cite[Chapter 7]{Vishnoi_Book}, \cite[Theorem 4.2]{Bubeck15}, and \cite[Section 9.2]{Amir_Book}. 

Now, consider the setting $g(p) := \|p\|^2/2$ and ${\cal D} := \nabla f(\RR^n)$.
That is, (\ref{eqn:general}) is the minimum-norm point problem on $\overline{\nabla f(\RR^n)}$.
As a mirror map, we can choose the Legendre-Fenchel conjugate $\Phi := f^*\mid_{\cal D}$. 
Then, the update (\ref{eqn:mirror}) becomes
\begin{equation}\label{eqn:mirror2}
\nabla f^* (p_{i+1}) := \nabla f^* (p_i) - \beta_i p_i.
\end{equation}
Define $x_i \in \RR^n$ by $x_i := \nabla f^* (p_{i})$. 
Since $p_i = \nabla f(x_i)$,  (\ref{eqn:mirror2}) becomes
\begin{equation}\label{eqn:mirror3}
x_{i+1} := x_i - \beta_i \nabla f(x_i).
\end{equation}
This is nothing but gradient descent, where the above Hessian Riemannian gradient flow is viewed as 
the continuous limit $\nabla^2 f^*(p(t))\dot p(t) = -p(t)$ of (\ref{eqn:mirror2}).
Then, the first part of Theorem~\ref{thm:mnp} (2) 
can be deduced from \cite[Theorem 3.1]{LuFreundNestrov2018}. 
Furthermore, an $O(1/i)$ convergence rate is obtained if $f^*(p^*) < \infty$ ($\Leftrightarrow$ $D_{f^*}(p^*,p) < \infty$). 
See \cite{Sakabe} for details.

It may be interesting to develop a manifold analogy of these observations, 
which may use the space $\nabla^{\infty}f(M) \subseteq CM^{\infty}$ in \cite{Hirai_Hadamard2022}.
Related to this issue, in Section~\ref{subsec:norm_minimization}, 
we will consider an analogous gradient flow (Kirwan's flow) 
in the complex projective space $\mathbb{P}(V)$.

\paragraph{Matrix scaling and geometric programming.}
The {\em matrix scaling problem}~\cite{Sinkhorn1964} is: For a given nonnegative matrix 
$A  = (a_{ij}) \in \RR_+^{n \times n}$, find positive diagonal matrices (scaling matrices) $X,Y$ 
such that $XAY$ approximates a doubly stochastic matrix, i.e.,
$\|(XAY){\bf 1}-{\bf 1}\| \approx 0$ and $\|(XAY)^{\top}{\bf 1} - {\bf 1}\| \approx 0$.
Define a convex function $f_{A}:\RR^n \times \RR^n \to \RR$ by
\begin{equation}\label{eqn:matrixscaling}
f_A(x,y) := \log \sum_{i,j} a_{ij} e^{x_i + y_j} - {\bf 1}^{\top}x/n  - {\bf 1}^{\top}y/n  \quad (x \in \RR^n,y \in \RR^n).
\end{equation}
From $\nabla f_A(x,y) = 
(  XAY{\bf 1} -{\bf 1}, (XAY)^{\top}{\bf 1} - {\bf 1} )/n$ 
for $(X,Y) := (e^{\diag x}, e^{\diag y}) \sqrt{n/\sum_{i,j} a_{ij} e^{x_i+y_j}}$, the required scaling matrices $X,Y$ are obtained from
$(x,y)$ having small gradient norm $\|\nabla f_A(x,y)\|$.
Particularly, such a point $(x,y)$ 
is obtained by minimizing $f_A$.

This matrix-scaling optimization falls into a more general class of
convex optimization, called {\em geometric programming}, to which our results are applicable.
A geometric program asks to minimize
a function $f: \RR^n \to \RR$ of the following form:
\begin{equation}\label{eqn:gp}
f(x) = \log \sum_{\ell =1}^N a_\ell e^{\omega_\ell^{\top}x} \quad (x \in \RR^n),
\end{equation}
where $a_\ell > 0$ and $\omega_\ell \in \RR^n$ for $\ell =1,2,\ldots,N$. 
It is well-known (see e.g.,\cite{BLNW2020interiorpoint}) that
\begin{itemize}
\item $f$ is $L$-smooth convex with $L := \max_{\ell} \|\omega_\ell\|^2$, and
\item $\overline{\nabla f(\RR^n)} = \Conv \{\omega_\ell\}_{\ell \in [N]}$.
\end{itemize}
Therefore, with $L = 2$, by gradient descent (\ref{eqn:DGR}) 
applied to (\ref{eqn:matrixscaling}), the gradient sequence 
$\nabla f_A(x_i)$ converges to the minimum-norm point $p^*$ of $\Conv \{e_i+e_j \mid i,j: a_{ij} > 0\}$.

We will show  in Section~\ref{subsec:operatorscaling} for the general setting of operator scaling that
the point $p^*$ and the limit of $XAY$
are characterized by a canonical block-triangular form of $A$,  
known as (an extended version of) the {\em DM-decomposition}~\cite{DulmageMendelsohn1958}; 
see also \cite[Section 2.2.3]{MurotaMatrixMatroid}. 
A similar convergence property was earlier shown by Hayashi, Hirai, and Sakabe~\cite{HayashiHiraiSakabe} 
for the {\em Sinkhorn algorithm}~\cite{Sinkhorn1964}, the standard alternating minimization algorithm 
for (\ref{eqn:matrixscaling}), 
in which the gradient $\nabla f_A(x,y)$
and the scaled matrix $XAY$ oscillate between two limit points described by the DM-decomposition.

\section{Application}\label{sec:applications}


\subsection{Norm-minimization in reductive group action}\label{subsec:norm_minimization}
We consider the formulation of {\em noncommutative optimization} in \cite{BFGOWW_FOCS2019}; see also \cite{HiraiNieuwboerWalter2023FOCS}.
Let $G \subseteq GL_n$ be 
a connected reductive algebraic group over $\CC$, where 
we assume that it is self-adjoint $G = G^{\dagger}$ (via conjugation~\cite[Theorem 3.13]{Wallach_book}).
Its Lie algebra $\mathfrak{g}$ is the complexification of the Lie algebra 
$\mathfrak{k}$
of a maximal compact subgroup $K = G \cap U_n$ 
as $\mathfrak{g} = \mathfrak{k} + i \mathfrak{k}$, where $i\mathfrak{k} \subseteq \mathfrak{p_n}$.
The inner product $\langle, \rangle$ on $\mathfrak{g}$ is defined by 
$\langle X,Y \rangle := Re \trace XY^{\dagger}$. 
Let $V$ be a finite-dimensional vector space over $\CC$.
Let $\pi: G \to GL(V)$ be a rational representation, 
where $\Pi$ denotes its Lie algebra representation: $\Pi(X) := (d/dt) \pi(e^{tX}) \mid_{t = 0}$.
Consider a $K$-invariant Hermitian inner product $(, )$ 
and the associated norm $\|\cdot \| = \sqrt{( \cdot, \cdot )}$ on $V$.
The norm-minimization problem over the orbit $\pi(G)v$ of $v \in V \setminus \{0\}$ is given by
\begin{equation}
\mbox{inf.}  \quad \|\pi(g) v\| \quad \mbox{s.t.}\quad g \in G.
\end{equation}
It turned out (e.g., \cite{BFGOWW_FOCS2019}) that  
this class of optimization problems has numerous, sometimes unexpected, applications 
and connections in various fields of mathematical sciences.
The most fundamental problem is to ask whether the infimum is zero, i.e., whether the origin  $0$ is in the orbit closure $\overline{\pi(G)v}$. 
This is the semistability problem in 
geometric invariant theory (GIT).
The representation $\pi$ gives rise to 
a Hamiltonian action $(g,[v]) \mapsto [\pi(g)v]$ on the complex projective space $\mathbb{P}(V)$.
The corresponding (modified\footnote{ 
The formal definition of the moment map 
is given by $[v] \mapsto - i \mu([v]) \in \mathfrak{k}$~\cite[Lemma 8.2]{GRS_Book}.
}) {\em moment map} $\mu:V \to i\mathfrak{k}$ is given by
\begin{equation}
 \langle \mu(v), H \rangle  := \frac{( v, \Pi(H)v )}{( v,v )} \quad (v \in V, H \in i\mathfrak{k}),
\end{equation} 
where $\mu$ may be regarded as $\mathbb{P}(V) \to i\mathfrak{k}$.
The following theorem is fundamental:
\begin{Thm}[{Kempf-Ness theorem, Hilbert-Mumford criterion; see \cite[Theorem 8.5 (i), Theorem 12.4]{GRS_Book}}]\label{thm:KNHM}
For $v \in V \setminus \{0\}$,  
the following conditions are equivalent:
\begin{itemize}
\item[(i)] $\inf_{g \in G} \|\pi(g)v\| = 0$.
\item[(ii)] $\inf_{g \in G} \|\mu (\pi(g)v)\| > 0$.
\item[(iii)] There is a 1-parameter subgroup $t \mapsto e(t)$ of $G$ 
such that $\lim_{t \to \infty} \pi(e(t))v = 0$.
\end{itemize}
\end{Thm}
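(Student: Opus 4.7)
The plan is to recast each condition as a statement about the Kempf-Ness function $F_v(x) := \log \|\pi(g)v\|$ on the symmetric space $M := P_n \cap G$ (where $g \in G$ is any element with $gg^{\dagger} = x$; well-definedness and smoothness follow from unitarity of $\pi|_K$), and then invoke the general gradient-flow machinery of Section~\ref{sec:gradientflow}. First I would verify that $F_v$ is geodesically convex on $M$ and that a direct differentiation at $I$ yields $\langle \nabla F_v(I), H\rangle = (\Pi(H)v,v)/(v,v) = \langle \tilde\mu(v), H\rangle$; extending to general $g$ by left-translation gives the identification
\[
\|\nabla F_v(x)\| = \|\tilde\mu(\pi(g)v)\|,
\]
so that condition (ii) is exactly $\kappa^\ast := \inf_{x\in M} \|\nabla F_v(x)\| > 0$.

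Next I would identify the boundary at infinity and compute $F_v^{\infty}$. A unit direction $\xi \in M^{\infty}$ is represented by a unit Hermitian $H \in i\mathfrak{k}$, and the associated unit-speed ray is $s \mapsto e^{sH}$. Decomposing $v = \sum_\lambda v_\lambda$ into $H$-weight components, the limit in (\ref{eqn:recession_function}) evaluates to
\[
F_v^{\infty}(\xi) = \lim_{s\to \infty} \frac{1}{s}\log\|\pi(e^{sH})v\| = \max\{\lambda : v_\lambda \neq 0\}.
\]
Hence $F_v^{\infty}(\xi) < 0$ is equivalent to all $H$-weights of $v$ being negative, which in turn is equivalent to $\pi(e^{sH})v \to 0$; this ties $F_v^{\infty}$ directly to condition (iii).

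With these dictionaries in place, the cycle closes as follows. The implication (iii) $\Rightarrow$ (i) is immediate from $\inf_g \|\pi(g)v\| \le \lim_{t\to\infty}\|\pi(e(t))v\| = 0$. For (ii) $\Rightarrow$ (iii), I would apply Theorem~\ref{thm:continuous_main} to $F_v$: the gradient flow converges in cone topology to the unique minimizer $\xi^\ast \in M^{\infty}$ of $F_v^{\infty}$, with $F_v^{\infty}(\xi^\ast) = -\kappa^\ast < 0$, and the weight computation shows that the 1-PSG encoded by $\xi^\ast$ satisfies $\pi(e(s))v \to 0$. For (i) $\Rightarrow$ (ii), observe that $F_v^{\infty}$ takes only finitely many values (one per maximal weight appearing in the $\pi(G)v$-orbit closure), so $\inf_{M^\infty} F_v^{\infty}$ is attained at some $\xi$; by Proposition~\ref{prop:f^infty}(1) it suffices to show this infimum is strictly negative.

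The main obstacle will be this last step, which is essentially the hard direction of the Hilbert-Mumford criterion and cannot be proved by convex analysis alone. The cleanest route I would take is to quote the classical algebro-geometric (i) $\Rightarrow$ (iii) implication (proved in GIT via Borel subgroups or by Kirwan's flow on $\|\mu\|^2$, as developed in \cite[Chapter 12]{GRS_Book}), and then derive (i) $\Rightarrow$ (ii) as the composite (i) $\Rightarrow$ (iii) $\Rightarrow$ (ii), where the second step uses the weight formula for $F_v^{\infty}$ together with Proposition~\ref{prop:f^infty}(1). The novel contribution of the present framework lies in the reverse direction (ii) $\Rightarrow$ (iii), where the gradient flow itself produces the destabilizing 1-PSG; this is also the form in which the theorem is used in the subsequent applications.
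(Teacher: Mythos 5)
The paper does not prove Theorem~\ref{thm:KNHM}: it imports it verbatim from Georgoulas--Robbin--Salamon (Theorems 8.5(i) and 12.4 there), so there is no internal proof to compare your proposal against. What the paper does do with its own machinery is the weaker, effective statement afterward (Theorem~\ref{thm:convergence_to_1-PSG} and the duality in Corollary~\ref{cor:duality}): given (ii), the gradient flow/descent of $F_v$ converges to the unique minimizer $\xi^*$ of $F_v^{\infty}$, and the weight formula shows $t\mapsto e^{t\xi^*}$ is a destabilizing 1-PSG. Your cycle (iii) $\Rightarrow$ (i) trivially, (ii) $\Rightarrow$ (iii) via Theorem~\ref{thm:continuous_main}, (iii) $\Rightarrow$ (ii) via the weight formula plus Proposition~\ref{prop:f^infty}(1), and (i) $\Rightarrow$ (iii) quoted from classical GIT is a correct organization, and you are right that the hard direction (i) $\Rightarrow$ (iii) (Hilbert--Mumford proper) is genuinely algebro-geometric and cannot come from the convex analysis of Sections~2--3 alone; the paper implicitly agrees by citing rather than proving.

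One concrete error to flag: the claim that ``$F_v^{\infty}$ takes only finitely many values'' is false. For fixed $\xi$, $F_v^{\infty}(\xi)$ is a maximum over the finitely many $\xi$-weights of $v$, but those weights vary continuously with $\xi$, so $F_v^{\infty}$ generically takes a continuum of values on $M^\infty$. The mechanism the paper actually uses to get attainment of $\inf F_v^{\infty}$ is lower-semicontinuity of $f^{\infty}$ on the compact space $M^{\infty}$ in the cone topology (stated after Proposition~\ref{prop:f^infty}). This mistaken observation is not load-bearing in your argument since you then quote the classical result anyway, but it should be replaced. A smaller caveat: $\xi^*$ produced by the gradient flow is a Hermitian direction that need not be rational, so the resulting $t\mapsto e^{t\xi^*}$ is a one-parameter subgroup in the Lie-theoretic sense (which is how the paper uses the term), not automatically an algebraic cocharacter $\mathbb{G}_m\to G$ as in the classical Hilbert--Mumford formulation; in the algebraic version one further invokes Kempf's rationality theorem.
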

The orbit $\pi(G)v$ in this situation is called {\em unstable}. Otherwise, it is called {\em semistable}.
Accordingly, we call the 1-parameter subgroup $e(t)$ in (iii) 
a {\em destabilizing 1-PSG}.

The unstability corresponds to the lower-unboundedness of 
the {\em Kempf-Ness function} $F_v$ on the group $G$ defined by
\begin{equation}
F_v(g) := \frac{1}{2}\log \|\pi(g)v\|^2 \quad (g \in G). 
\end{equation}
Since $\|\cdot\|$ is $K$-invariant, 
the Kempf-Ness function is viewed as a function on the symmetric space $K \backslash G$.
By $\|\pi(g)v\|^2 = (\pi(g^{\dagger}g)v,v)$ and $K \backslash G \simeq P_n \cap G$ by $Kg \mapsto g^{\dagger}g$, 
we may consider the following version of the Kempf-Ness function $f_v$ on $P_n \cap G$:
\begin{equation}\label{eqn:F_v}
f_v(x) := \log ( \pi(x)v,v) \quad (x \in P_n \cap G).
\end{equation}
It is clear that $f_v(g^{\dagger}g) = 2 F_v(g)$.
Then, $f_v$ is an $L$-smooth convex function such that
the transported gradient of $f_v$ provides the moment map $\mu$:
\begin{Lem}[\cite{BFGOWW_FOCS2019}] \label{lem:grad=mu}
\begin{itemize}
\item[(1)] $f_v$ is $N_{\pi}^2$-smooth convex, where $N_\pi$ is the maximum of the norm of a weight for $\pi$.
\item[(2)
] $\tau_{x \to I} \nabla f_v(x) = \mu(\pi(x^{1/2})v)$.
\end{itemize}
\end{Lem}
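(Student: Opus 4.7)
The plan is to reduce both assertions to a one-variable calculation along a geodesic $\gamma$ in $P_n \cap G$ emanating from $x$, using the explicit formula $\gamma(t) = x^{1/2} e^{tA} x^{1/2}$ with $A := x^{-1/2}Hx^{-1/2} = \tau_{x\to I}H$ for a given tangent vector $H \in T_x$. Two structural facts will be used throughout: the $K$-invariance of $(\cdot,\cdot)$ forces $\pi(g^\dagger) = \pi(g)^\dagger$, so $\pi(x^{1/2})$ is self-adjoint and positive definite; and $\Pi$ sends $i\mathfrak{k}$ to Hermitian operators on $V$.

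For (2), I would combine $\pi(\gamma(t)) = \pi(x^{1/2})\pi(e^{tA})\pi(x^{1/2})$ with $\pi(e^{tA}) = e^{t\Pi(A)}$ and the self-adjointness of $\pi(x^{1/2})$ to rewrite $F_v$ along $\gamma$ as
\begin{equation*}
F_v(\gamma(t)) = \log \bigl(e^{t\Pi(A)} w, w\bigr), \qquad w := \pi(x^{1/2})v.
\end{equation*}
Differentiating at $t = 0$ gives $dF_v(H) = (\Pi(A)w, w)/(w,w) = \langle \tilde\mu(w), A\rangle$ by the definition of $\tilde\mu$. Since $\tau_{x \to I}$ is a linear isometry between $T_x$ and $T_I$ with $A = \tau_{x\to I}H$, the defining relation $\langle \nabla F_v(x), H\rangle_x = dF_v(H)$ rearranges into $\tau_{x \to I}\nabla F_v(x) = \tilde\mu(w)$, proving~(2).

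For (1), I would spectrally decompose the Hermitian operator $B := \Pi(A) = \sum_j \lambda_j P_j$ and set $\alpha_j := \|P_j w\|^2 \geq 0$. Then the one-variable function becomes a log-sum-exp
\begin{equation*}
f(t) := F_v(\gamma(t)) = \log \sum_j \alpha_j\, e^{t\lambda_j}.
\end{equation*}
Its second derivative $f''(t)$ equals the variance of the $\lambda_j$ under the probability weights $\alpha_j e^{t\lambda_j}/\sum_k \alpha_k e^{t\lambda_k}$, hence is nonnegative---yielding geodesic convexity---and bounded above by $\max_j \lambda_j^2 = \|B\|_{\mathrm{op}}^2$. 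What remains is the estimate $\|\Pi(A)\|_{\mathrm{op}} \leq N_\pi \|A\|$: any $A \in i\mathfrak{k}$ is $K$-conjugate to an element $A' \in i\mathfrak{t}$ of a Cartan subalgebra with $\|A'\| = \|A\|$, and since $\pi(K)$ acts unitarily the operator norms of $\Pi(A)$ and $\Pi(A')$ coincide; the eigenvalues of $\Pi(A')$ are $\omega(A')$ as $\omega$ ranges over the weights of $\pi$, so $\|\Pi(A')\|_{\mathrm{op}} = \max_\omega |\omega(A')| \leq N_\pi \|A'\|$ by Cauchy--Schwarz. Combining these yields $0 \leq \langle \nabla^2 F_v(x)H, H\rangle_x \leq N_\pi^2 \|H\|_x^2$.

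The main obstacle is the bookkeeping of identifications rather than any deep step: one must verify that $T_x(P_n \cap G)$ is the $\mathrm{Ad}(x^{1/2})$-translate of $i\mathfrak{k}$ so that $A$ really lies in $i\mathfrak{k}$, that the totally-geodesic property of $P_n \cap G \subseteq P_n$ lets one apply the $P_n$ geodesic formula, and that the $K$-invariant Hermitian inner product on $V$ does force $\pi(g^\dagger) = \pi(g)^\dagger$ on all of $G$ (via the polar decomposition $g = k e^H$ with $H \in i\mathfrak{k}$). Once these identifications are in place, the log-sum-exp reduction trivializes the analytic content of both claims.
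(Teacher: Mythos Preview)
Your proposal is correct and matches the paper's approach. For (2), the paper gives exactly your one-line computation $\langle \nabla F_v(x), H\rangle_x = (d/dt)\,F_v(x^{1/2}e^{tx^{-1/2}Hx^{-1/2}}x^{1/2})\mid_{t=0} = \langle \tilde\mu(\pi(x^{1/2})v), x^{-1/2}Hx^{-1/2}\rangle_I$ together with $\tau_{x\to I}H = x^{-1/2}Hx^{-1/2}$; for (1), the paper simply cites \cite{BFGOWW_FOCS2019}, and your log-sum-exp reduction with the weight-norm bound $\|\Pi(A)\|_{\mathrm{op}} \leq N_\pi \|A\|$ is precisely the argument found there.
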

The second property (2) is implicit in \cite{BFGOWW_FOCS2019} and follows from $\tau_{x \to I}H = x^{-1/2}Hx^{-1/2}$ and $
\langle \nabla f_v(x),H\rangle_x = (d/dt) f_v(x^{1/2}e^{tx^{-1/2}Hx^{-1/2}}x^{1/2}) \mid_{t = 0}= \langle \mu(\pi(x^{1/2})v), x^{-1/2}Hx^{-1/2} \rangle_I
$.
In particular, for the Kempf-Ness function $f_v$, 
the unboundedness is equivalent to the positivity 
of the minimum gradient-norm. 
Applying Corollary~\ref{cor:duality}, we have:
\begin{Thm}\label{thm:inf_sup_KempfNess}
    $\displaystyle \inf_{g \in G} \|\mu(\pi(g)v)\| = \sup_{\xi \in B_I} - f_v^{\infty}(\xi)$. If $f_v^{\infty}(\xi) < 0$, 
    then $t \mapsto e^{t \xi}$ is a destabilizing 1-PSG.
\end{Thm}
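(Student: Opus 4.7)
The plan is to reduce the statement to the abstract duality of Corollary~\ref{cor:duality} applied to the Kempf-Ness function $F_v$ on the Hadamard manifold $M := P_n \cap G$, using Lemma~\ref{lem:grad=mu}~(2) as the bridge between the moment-map norm and the gradient norm.

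First I would establish the identity
\[
\inf_{g \in G}\|\tilde{\mu}(\pi(g)v)\| = \inf_{x \in M}\|\nabla F_v(x)\|.
\]
Given $g \in G$, use the polar decomposition $g = k p$ with $k \in K$ and $p := (g^\dagger g)^{1/2} \in M$. A short computation using $\pi(k)^{\dagger}\pi(k) = I$ together with $K$-invariance (under the adjoint action) of the inner product on $i\mathfrak{k}$ shows $\|\tilde{\mu}(\pi(g)v)\| = \|\tilde{\mu}(\pi(p)v)\|$, so the infimum over $G$ equals the infimum over $M$ via the assignment $x = p^2$, $x^{1/2} = p$. Combined with Lemma~\ref{lem:grad=mu}~(2) and the fact that parallel transport is an isometry, the common value is $\inf_{x \in M}\|\nabla F_v(x)\|$. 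Applying Corollary~\ref{cor:duality} to $M$ and identifying $B^\infty \simeq B_I$ via the exponential map at $I$ yields
\[
\inf_{x \in M}\|\nabla F_v(x)\| = \sup_{\xi \in B_I} -F_v^\infty(\xi),
\]
which is the asserted equality.

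For the second claim, suppose $F_v^\infty(\xi) < 0$. Convexity of $s \mapsto F_v(e^{s\xi})$ implies that the slope $(F_v(e^{s\xi}) - F_v(I))/s$ is nondecreasing in $s > 0$ with limit $F_v^\infty(\xi)$, hence is bounded above by $F_v^\infty(\xi) < 0$ for all $s > 0$. Consequently $F_v(e^{s\xi}) \to -\infty$ linearly in $s$. Because $\xi \in i\mathfrak{k}$ makes $\Pi(\xi)$ Hermitian and hence $\pi(e^{t\xi})^\dagger = \pi(e^{t\xi})$, we have the elementary identity
\[
\|\pi(e^{t\xi})v\|^2 = (\pi(e^{2t\xi})v,\, v) = \exp F_v(e^{2t\xi}),
\]
so $\pi(e^{t\xi})v \to 0$ as $t \to \infty$. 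Since $\xi \in \mathfrak{g}$, the family $t \mapsto e^{t\xi}$ is a one-parameter subgroup of $G$, and by Theorem~\ref{thm:KNHM}~(iii) it destabilizes $v$.

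The main work is the bookkeeping in the first step: confirming via the polar decomposition that the infimum of the moment-map norm over $G$ really descends to $M$, and reconciling that the Kempf-Ness function uses $(\pi(x)v,v) = \|\pi(x^{1/2})v\|^2$ rather than $\|\pi(x)v\|^2$. Once these identifications are in place, the theorem is a clean specialization of the abstract duality developed in Section~\ref{sec:gradientflow}.
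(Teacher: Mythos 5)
Your proof is correct and follows essentially the same route as the paper: reduce via polar decomposition and the conjugation-equivariance of $\tilde\mu$ to $\inf_{x\in M}\|\tilde\mu(\pi(x^{1/2})v)\|$, identify this with $\inf_{x\in M}\|\nabla F_v(x)\|$ via Lemma~\ref{lem:grad=mu}~(2), and invoke Corollary~\ref{cor:duality}. Your treatment of the second claim (unwinding $\|\pi(e^{t\xi})v\|^2 = \exp F_v(e^{2t\xi})$ and using the monotonicity of difference quotients) just spells out what the paper compresses into "can be seen from the definitions," and the only small omission is that you tacitly use $x\in P_n\cap G\Rightarrow x^{1/2}\in P_n\cap G$ (which the paper notes follows from $G$ being algebraic).
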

\begin{proof}
$\inf_{g \in G} \|\mu(\pi(g)v)\| = \inf_{x \in P_n \cap G} \|\mu(\pi(x^{1/2})v)\|$ follows from
$\mu(\pi(ug)v) = u \mu(\pi(g)v)u^{\dagger}$ for $u \in K$,
the polar decomposition $g = u x$ for $u \in K$, $x \in P_n \cap G$, 
and $x \in P_n \cap G \Rightarrow x^{a} \in P_n \cap G$ (since $G$ is algebraic).
The latter part can be seen from the definitions of the Kempf-Ness function (\ref{eqn:F_v}) 
and the recession function (\ref{eqn:recession_function}).
\end{proof}
As seen below, this is a part of the theory of moment-weight inequality~\cite{GRS_Book}, in which
the recession function $f_v^{\infty}$ is essentially 
Mumford's numerical invariant, called the {\em $\mu$-weight}; see Lemma~\ref{lem:mu-weight} below.

Consider applying gradient descent to $f_v$: 
\begin{equation}
x_{k+1} = \exp_{x_k} \left(- \frac{1}{L} \nabla f_v(x_k)\right),\quad x_0 = I,\label{eqn:DGR_F}
\end{equation}
where $L := N_\pi^2$. In this setting, 
updating group elements $g_k$ in $G$ may be more suitable:
\begin{equation}\label{eqn:DGR_G}
g_{k+1} = e^{-\frac{1}{2L}\mu(\pi(g_k)v)}g_k, \quad g_0=I. 
\end{equation}
This is the {\em first order algorithm} in B\"{u}rgisser et al.~\cite{BFGOWW_FOCS2019}. 
Each of the two updates (\ref{eqn:DGR_F}) and (\ref{eqn:DGR_G}) has its own advantage. 
Their relation 
is given by
\begin{Lem}\label{lem:x_k-g_k}
$x_k = g_k^{\dagger}g_k.$ 
\end{Lem}
\begin{proof}
If $g_+ = e^{-\frac{1}{2L}\mu(\pi(g)v)}g$ and $g = u x^{1/2}$ for $u \in K$, $x\in P_n \cap G$, 
then it holds
$g_{+}^{\dagger} g_{+} = g^{\dagger} e^{- \frac{1}{L} \mu(\pi(g)v)} g = x^{1/2} u^{\dagger} e^{- \frac{1}{L} \mu(\pi(u x^{1/2})v)} u x^{1/2}
=  x^{1/2} e^{- \frac{1}{L} \mu(\pi(x^{1/2})v)} x^{1/2} = \exp_{x} - \frac{1}{L} \nabla f_v(x)$,
where the third inequality follows from 
$\mu(\pi(u)v') = u \mu(v')u^{\dagger}$ and the fourth from Lemma~\ref{lem:grad=mu}~(2).
\end{proof}
For the semistable case,
\cite{BFGOWW_FOCS2019} showed its iteration complexity to compute $\inf_{g \in G}\|\pi(g)v\|$ and to find $g \in G$ with $\| \mu(\pi(g)v)\|\approx 0$. 
For the unstable case, 
our result (Theorem~\ref{thm:discrete_main}) implies
that gradient descent (\ref{eqn:DGR_F}) constructs 
a destabilizing $1$-PSG in the limit, which
is {\em maximally destabilizing} in the sense that it is obtained from the unique minimizer of $f^{\infty}_v$ 
over~$S_I (P_n \cap G)$ (recall that $S_I$ denotes the unit sphere in $T_I$). This special 1-PSG is the same as the one shown by Kempf~\cite{Kempf1978}.
\begin{Thm}\label{thm:convergence_to_1-PSG} 
Suppose that $\inf_{g \in G} \|\pi(g)v\| = 0$.
Let $x_k$ be the sequence of (\ref{eqn:DGR_F}), and
let $u_k$ be the sequence defined by $x_k = e^{d(x_k,I)u_k}$. 
Then $u_k$ converges to the unique minimizer $\xi^*$ of $f_v^{\infty}$ over $S_I$, where 
$t \mapsto e^{t\xi^*}$ is a maximally destabilizing 1-PSG.
\end{Thm}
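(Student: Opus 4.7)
The plan is to apply Theorem~\ref{thm:discrete_main} directly to the Kempf-Ness function $F_v$ on the Hadamard manifold $M := P_n \cap G$, and then to translate the resulting geometric conclusion about minimizers of $F_v^\infty$ into the stated statement about destabilizing 1-parameter subgroups.

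First I would check that the key hypothesis $\kappa^* := \inf_{x \in M} \|\nabla F_v(x)\| > 0$ is satisfied. By Lemma~\ref{lem:grad=mu}~(2), $\|\nabla F_v(x)\| = \|\tilde\mu(\pi(x^{1/2})v)\|$, since parallel transport preserves the norm. Because $G$ is a reductive algebraic group closed under $x \mapsto x^{1/2}$, the map $x \mapsto x^{1/2}$ sends $M$ onto $M$; combined with the polar decomposition $g = ux$ ($u \in K$, $x \in M$) and the $K$-equivariance $\tilde\mu(\pi(u)w) = u\tilde\mu(w)u^{\dagger}$, this yields $\inf_{x \in M}\|\nabla F_v(x)\| = \inf_{g \in G}\|\tilde\mu(\pi(g)v)\|$. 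The hypothesis $\inf_{g \in G}\|\pi(g)v\| = 0$ together with the equivalence (i) $\Leftrightarrow$ (ii) of the Kempf-Ness/Hilbert-Mumford theorem (Theorem~\ref{thm:KNHM}) then gives $\kappa^* > 0$.

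Second, Lemma~\ref{lem:grad=mu}~(1) ensures $F_v$ is $L$-smooth with $L = N_\pi^2$, so the scheme (\ref{eqn:DGR_F}) is exactly an instance of (\ref{eqn:DGR}). Theorem~\ref{thm:discrete_main}~(2) therefore applies and shows that $x_k$ converges in the cone topology to the unique minimizer $\xi^* \in M^\infty$ of $F_v^\infty$. Under the identification $M^\infty \simeq S_I$ used to define $u_k$ via $x_k = \exp_I(d(x_k,I)u_k) = e^{d(x_k,I)u_k}$, this convergence translates exactly into $u_k \to \xi^* \in S_I$.

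Third, I would verify that $e^{t\xi^*}$ is a destabilizing 1-PSG with the maximality property. Since $T_I M = \mathfrak{p}_n \cap \mathfrak{g} = i\mathfrak{k}$, the element $\xi^* \in S_I \subset i\mathfrak{k}$ belongs to $\mathfrak{g}$, and $t \mapsto e^{t\xi^*}$ is indeed a 1-parameter subgroup of $G$. By the definition (\ref{eqn:recession_function}) of $F_v^\infty$ and the equality in Theorem~\ref{thm:discrete_main},
\[
F_v^\infty(\xi^*) = \lim_{s \to \infty} \frac{\log(\pi(e^{s\xi^*})v, v)}{s} = -\kappa^* < 0,
\]
so $(\pi(e^{s\xi^*})v,v) = \|\pi(e^{s\xi^*/2})v\|^2 \to 0$ as $s \to \infty$, giving a destabilizing 1-PSG. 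Maximality is immediate: by Theorem~\ref{thm:discrete_main}, $\xi^*$ is the unique minimizer of $F_v^\infty$ over $S_I$, which is exactly the definition of a maximum destabilizing 1-PSG.

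The main obstacle, if any, is more conceptual than technical: everything substantive---the existence of the limit direction, its uniqueness, the strong-duality identity $\kappa^* = -F_v^\infty(\xi^*)$---has been pushed into the general Theorem~\ref{thm:discrete_main}. Once the hypotheses are verified via Kempf-Ness and the gradient/moment-map identification, the specialization reduces to unpacking definitions. The only care needed is in verifying that the tangent-vector identification $M^\infty \simeq S_I$ is the one used to define $u_k$, and in checking that the directional limit $F_v^\infty(\xi^*)<0$ indeed certifies unstability along the corresponding algebraic 1-PSG.
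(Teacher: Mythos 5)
Your proposal is correct and follows exactly the route the paper intends: the paper treats Theorem~\ref{thm:convergence_to_1-PSG} as an immediate specialization of Theorem~\ref{thm:discrete_main} to $F_v$ (remarking only that ``our result (Theorem~\ref{thm:discrete_main}) implies that the gradient descent constructs a destabilizing 1-PSG in limit''), and your write-up simply makes explicit the verifications the paper leaves implicit---$\kappa^* > 0$ via Lemma~\ref{lem:grad=mu}, Theorem~\ref{thm:KNHM} (i)$\Leftrightarrow$(ii), and the polar-decomposition identity $\inf_x\|\nabla F_v(x)\| = \inf_g\|\tilde\mu(\pi(g)v)\|$; the $L$-smoothness from Lemma~\ref{lem:grad=mu}~(1); and the translation of cone-topology convergence into $u_k \to \xi^*$, together with the observation that $F_v^\infty(\xi^*) < 0$ forces $\pi(e^{t\xi^*})v \to 0$.
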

Unfortunately, since $f^{\infty}_v$ is 
not necessarily (upper semi)continuous, 
this theorem 
does not imply the algorithmic statement: $t \mapsto e^{t u_k}$ 
is a destabilizing $1$-PSG for some large $k$.
Therefore,
we need a certain rounding idea 
to obtain a destabilizing $1$-PSG from $u_k$. 
We see in the next Section~\ref{subsec:operatorscaling} that 
such a rounding is possible 
for  the left-right action.

We also consider convergence of the moment-map sequence $\mu(\pi(g_k)v)$.
Let ${\cal C}_{\pi} \subseteq i\mathfrak{k} = T_I(P_n \cap G)$ denote 
a positive Weyl chamber: It is a convex cone with the property that 
for any $H \in i \mathfrak{k}$ there is a unique 
point in ${\cal C}_\pi$, denoted by $\spec H$, satisfying $\spec H = k H k^{\dagger}$ 
for some $k \in K$.
The {\em moment polytope} $\varDelta_{v} \subseteq {\cal C}_\pi$ is defined as 
the closure of the image of $g \mapsto \spec \mu(\pi(g)v)$:
\[
\varDelta_{v} := \overline{\{\spec \mu(\pi(g)v) \mid g \in G\}}.
\]
The convexity theorem by Guillemin and Sternberg~\cite{GuilleminSternberg1982,GuilleminSternberg1984} and Kirwan~\cite{Kirwan1984} 
says that it is a convex polytope.
\begin{Thm}[{Convexity theorem~\cite{GuilleminSternberg1982,GuilleminSternberg1984,Kirwan1984}}]
$\varDelta_v$ is a convex polytope.
\end{Thm}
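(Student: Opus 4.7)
The plan is to derive the theorem from the strong duality of Section~\ref{sec:gradientflow} via the Mumford--Ness \emph{shifting trick}, in the spirit of the Georgoulas--Robbin--Salamon GIT treatment~\cite{GRS_Book}. For each rational $\lambda \in {\cal C}_\pi$, enlarge the action to $\pi \otimes \chi_\lambda^{-1}$ of $G \times T_\lambda$ on $V \otimes L_\lambda^*$, where $T_\lambda$ is an auxiliary torus realizing $\lambda$ as an integral character $\chi_\lambda$, and let $F_v^\lambda$ denote the Kempf--Ness function of the resulting orbit of $v \otimes \mathbf{1}_\lambda$ on the Hadamard symmetric space $(P_n \cap G) \times (P_1 \cap T_\lambda)$. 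By Lemma~\ref{lem:grad=mu}(2), the transported gradient of $F_v^\lambda$ at $x$ encodes $\tilde\mu(\pi(x^{1/2})v) - \lambda$ in the $P_n \cap G$-component, so $\inf_x \|\nabla F_v^\lambda(x)\| = 0$ is exactly the condition that some sequence $g_k \in G$ satisfies $\tilde\mu(\pi(g_k)v) \to \lambda$.

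Step one is to establish the characterization $\lambda \in \varDelta_v \iff \inf_x \|\nabla F_v^\lambda(x)\| = 0$. The ``$\Leftarrow$'' direction is immediate from the gradient formula and continuity of $\spec$. The ``$\Rightarrow$'' direction exploits $K$-equivariance $\tilde\mu(\pi(kg)v) = k \tilde\mu(\pi(g)v) k^{\dagger}$: a sequence $g_k$ with $\spec \tilde\mu(\pi(g_k)v) \to \lambda$ can be pre-conjugated by suitable $k_k \in K$ so that $\tilde\mu(\pi(k_k g_k)v) \to \lambda$ itself, making the gradient norm vanish. Step two applies Corollary~\ref{cor:duality} to $F_v^\lambda$: using $(F_v^\lambda)^\infty(\xi) = F_v^\infty(\xi) - \langle \lambda, \xi\rangle$ (verified by running the definition~(\ref{eqn:recession_function}) along the geodesic $t \mapsto e^{t\xi}$), the vanishing of the minimum gradient-norm is equivalent to the family of linear inequalities
$$\langle \lambda, \xi \rangle \;\leq\; F_v^\infty(\xi) \qquad (\xi \in i\mathfrak{k}).$$
Step three computes, by diagonalizing $\pi$ on a maximal torus and decomposing $v = \sum_w v_w$ into weight vectors, the closed form $F_v^\infty(\xi) = \max\{\langle w,\xi\rangle : w \text{ a weight of } \pi \text{ with } v_w \neq 0\}$ for $\xi$ in the Cartan, extended to $i\mathfrak{k}$ by the $K$-invariance of $F_v^\infty$.

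Since the right-hand side is the support function of the finite set of weights of $\pi$ occurring in $v$, the characterization unfolds to $\varDelta_v = {\cal C}_\pi \cap \Conv(\text{Weyl orbit of the weight set})$, the intersection of a rational polyhedral cone with the convex hull of finitely many points, which is a convex polytope. The main obstacle is the ``$\Rightarrow$'' direction of step one: a priori the convergence $\spec\tilde\mu(\pi(g_k)v) \to \lambda$ only forces $\tilde\mu(\pi(g_k)v)$ into $K \cdot \lambda + o(1)$, and one must choose the conjugating elements $k_k \in K$ compatibly so that the limit is $\lambda$ itself rather than merely a point in its $K$-orbit; this is handled by compactness of $K$ together with the fact that ${\cal C}_\pi$ is a closed fundamental domain for the adjoint $K$-action on $i\mathfrak{k}$.
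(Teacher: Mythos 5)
The paper does not prove this theorem; it simply cites Guillemin--Sternberg. So you are attempting an original proof via the paper's duality machinery rather than reproducing anything in the paper. The strategy (shifting trick, then Corollary~\ref{cor:duality} applied to the shifted Kempf--Ness function) is a genuinely attractive idea and does yield the \emph{convexity} of $\varDelta_v$: the equivalence $\lambda \in \varDelta_v \iff \langle\lambda,\xi\rangle \le F_v^\infty(\xi)$ for all $\xi$ together with closedness of ${\cal C}_\pi$ exhibits $\varDelta_v$ as an intersection of half-spaces with a polyhedral cone, and boundedness follows from $L$-smoothness of $F_v$ (Lemma~\ref{lem:grad=mu}). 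But this only shows $\varDelta_v$ is a compact convex set, not that it is a \emph{polytope}.

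The gap is in step three, in the sentence ``extended to $i\mathfrak{k}$ by the $K$-invariance of $F_v^\infty$.'' The recession function $F_v^\infty$ is \emph{not} $K$-invariant: writing $\xi = k\,\xi_0\,k^\dagger$ with $\xi_0$ in the Cartan, one has $F_v^\infty(\xi) = F^\infty_{\pi(k^\dagger)v}(\xi_0) = \max\{\langle w,\xi_0\rangle : (\pi(k^\dagger)v)_w \neq 0\}$, and the weight support of $\pi(k^\dagger)v$ varies with $k$ and can drop on lower-dimensional strata. Consequently the closed form $\varDelta_v = {\cal C}_\pi \cap \Conv(\text{Weyl orbit of the weight set of }v)$ is simply false. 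A counterexample: $G = SL_2$ acting on $V = \CC^2$ (standard representation) and $v = e_1$. Here $\spec\tilde\mu(\pi(g)v)$ is constant equal to $\diag(1/2,-1/2)$, so $\varDelta_v$ is a single point, whereas your formula produces the segment from $0$ to $\diag(1/2,-1/2)$. What happens is that $F_v^\infty(\diag(-a,a)) = -a$ for $a>0$ (only the highest weight $w_1$ occurs in $v$), and this ``dropped'' special direction supplies the binding constraint $b \ge 1/2$ that trims the segment down to the point.

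Because $F_v^\infty$ is merely lower-semicontinuous and its special-direction values are governed by the Bruhat/Schubert stratification of $K/T$, proving that finitely many of the inequalities $\langle\lambda,\xi\rangle \le F_v^\infty(\xi)$ suffice --- equivalently, that the compact convex set you obtain has finitely many facets --- is precisely the hard part of the Kirwan/Guillemin--Sternberg/Ness convexity theorem, and it does not come for free from the strong-duality framework of Section~\ref{sec:gradientflow}. To complete your approach you would need, for example, to show that the map $\xi \mapsto F_v^\infty(\xi)$ takes only finitely many ``piecewise-linear types'' indexed by the stratification of the flag variety, which is substantial additional input.
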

By Lemma~\ref{lem:grad=mu}~(2), the polar decomposition $g = u x^{1/2}$ for $g \in G$, 
$u \in K$, $x \in P_n \cap G$, and $\mu(\pi(ux^{1/2})v) = u\mu(\pi(x^{1/2})v)u^{\dagger}$, 
it holds
\begin{equation}\label{eqn:|grad|=|mu|=|p|}
\inf_{x \in P_n \cap G} \|\nabla f_v(x)\|  =  \inf_{x \in P_n \cap G} \|\mu(\pi(x^{1/2})v)\| 
= \inf_{g \in G} \|\mu(\pi(g)v)\|= \inf_{g \in G} \|\spec \mu(\pi(g)v)\| = \inf_{p \in \varDelta_v} \|p\| 
\end{equation}
By Theorem~\ref{thm:discrete_main}, 
we have the convergence of $\spec \mu(\pi(g_k) v) (= \spec \mu(\pi(x_k^{1/2}) v))$ along the gradient-descent trajectory, which is an analogue of Theorem~\ref{thm:mnp}~(2).
\begin{Thm}\label{thm:convergene_of_momentmap} 
Let $p^*$ be the minimum-norm point of $\varDelta_v$, and let $H_k$ be the sequence defined by $x_k = e^{k H_k/L}$. 
Suppose that $\inf_{g \in G} \|\pi(g)v\| = 0$.
Then, both $\spec \mu(\pi(g_k) v)$ and $\spec (- H_k)$ converge to $p^*$ for $k \to \infty$.
\end{Thm}
\begin{proof}
It suffices to show the claim for $H_k$. By Proposition~\ref{prop:velocity_of_escape_D} (2), Proposition~\ref{prop:accumulation_of_transported_gradient_D}, and Lemma~\ref{lem:grad=mu}~(2), 
it holds 
\[
\liminf_{k \to \infty}\| \mu(\pi(x^{1/2}_k)v) + H_k \| = 0.
\]
Since $\spec \mu(\pi(x_k^{1/2})v)$ converges to $p^*$ and $H_k$ converges (to $\kappa^*\xi^*$), 
it must hold that $\spec (-H_k)$ converges to $p^*$.
\end{proof}
Question~\ref{que:convergence_D}, if it is true, 
would imply the stronger convergence $\lim_{k \to \infty} \mu(\pi(x^{1/2}_k)v) = - \lim_{k\to \infty}H_k$.


\paragraph{Moment-weight inequality and gradient flow of moment-map squared.}
Clearly, via Theorem~\ref{thm:continuous_main}, 
the above results (Theorems~\ref{thm:convergence_to_1-PSG} and \ref{thm:convergene_of_momentmap}) hold 
for the gradient flow: 
\begin{equation}\label{eqn:GR_F}
\dot x(t) = - \nabla f_v(x(t)), \quad x(0) = I.
\end{equation}
Our consideration of this case falls into the theory 
of {\em moment-weight inequality} by Georgoulas, Robbin, and Salamon~\cite{GRS_Book}, 
which builds upon the earlier work by Kempf, Kirwan, Mumford, and Ness in GIT, 
and the recent work by Chen and Sun~\cite{ChenSun2014} in $K$-stability.
Here, we briefly summarize the relation 
by deducing an important part of the theory
from our results in Section~\ref{subsec:continuous_GR}.
We use notation $g \cdot [v] := [\pi(g)v]$ for the action on $\mathbb{P}(V)$. 
According to \cite[Chapter 3]{GRS_Book},
consider the gradient flow ({\em Kirwan's flow}) of the squared-norm of the moment map 
on $\mathbb{P}(V)$: 
\begin{equation}\label{eqn:KirwanGR}
\dot \zeta(t) = - \nabla \frac{\|\mu(\zeta(t))\|^2}{2} , \quad \zeta(0) = [v].
\end{equation}
This is the gradient flow 
of a real analytic function $\zeta \mapsto \|\mu(\zeta)\|^2/2$ on 
a compact Riemannian manifold $\mathbb{P}(V)$ (with respect to the Fubini-Study metric).
By the standard argument of the {\L}ojasiewicz gradient inequality, 
the limit of $\zeta(t)$ exists.
\begin{Thm}[{Convergence Theorem~\cite[Theorem 3.3]{GRS_Book}}]\label{thm:limit}
    The limit $\zeta_{\infty} := \lim_{t \to \infty} \zeta(t)$ exists.
\end{Thm}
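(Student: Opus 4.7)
The plan is to apply the Lojasiewicz gradient inequality, the standard convergence tool for gradient flows of real-analytic functions on compact manifolds. First I would observe that $\phi(\zeta) := \|\mu(\zeta)\|^2/2$ is real-analytic on $\mathbb{P}(V)$: the moment map $\mu$ is determined by Hermitian-form ratios on $V$, and the Fubini-Study metric is real-analytic, so both $\phi$ and $\nabla \phi$ are real-analytic. Since $\mathbb{P}(V)$ is compact, the trajectory $\zeta(t)$ of (\ref{eqn:KirwanGR}) exists for all $t\ge 0$, $\phi(\zeta(t))$ is nonincreasing and bounded below by $0$, and $(d/dt)\phi(\zeta(t))=-\|\nabla \phi(\zeta(t))\|^2$, so $\phi(\zeta(t))\to c_\infty\ge 0$ and $\int_0^\infty \|\nabla \phi(\zeta(t))\|^2\,dt<\infty$. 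Combining compactness with this summability, the $\omega$-limit set $\Omega$ of the trajectory is nonempty, and each point of $\Omega$ is a critical point of $\phi$ at which $\phi=c_\infty$.

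To promote this to convergence of the trajectory itself, I would fix any $\zeta^*\in\Omega$ and invoke the Lojasiewicz gradient inequality at $\zeta^*$: there exist a neighborhood $U$ of $\zeta^*$ and constants $c>0$, $\alpha\in(0,1/2]$ such that
\[
|\phi(\zeta)-c_\infty|^{1-\alpha}\le c\,\|\nabla \phi(\zeta)\|\qquad (\zeta\in U).
\]
Set $H(t):=(\phi(\zeta(t))-c_\infty)^\alpha$. While $\zeta(t)\in U$ and $\phi(\zeta(t))>c_\infty$, differentiation along (\ref{eqn:KirwanGR}) together with the Lojasiewicz bound yields $-\dot H(t)\ge (\alpha/c)\,\|\dot\zeta(t)\|$. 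Since $\zeta^*\in\Omega$, I can pick $t_0$ with $\zeta(t_0)$ so close to $\zeta^*$ that $d(\zeta(t_0),\zeta^*)+(c/\alpha)H(t_0)$ is smaller than the radius of some ball around $\zeta^*$ contained in $U$. The usual trapping argument then shows $\zeta(t)\in U$ for all $t\ge t_0$, because at any first exit time $T$ we would have $d(\zeta(t_0),\zeta(T))\le (c/\alpha)(H(t_0)-H(T))\le (c/\alpha)H(t_0)$, contradicting the choice of $t_0$. Integrating over $[t_0,\infty)$ then produces the finite-length estimate $\int_{t_0}^\infty\|\dot \zeta(t)\|\,dt\le (c/\alpha)H(t_0)<\infty$, so $\zeta(t)$ is Cauchy in $\mathbb{P}(V)$ and $\lim_{t\to\infty}\zeta(t)$ exists (it must lie in $\Omega$ and in fact equal $\zeta^*$).

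The main obstacle is the Lojasiewicz gradient inequality itself; for real-analytic functions on compact real-analytic manifolds this is classical, reducing via analytic charts to Lojasiewicz's theorem on real-analytic germs, so one may simply cite it. The remaining bookkeeping of the trapping argument is routine. It is worth noting the sharp contrast with the earlier results: the Kempf-Ness function $F_v$ lives on the \emph{non-compact} Hadamard manifold $P_n\cap G$ and our convergence there takes place in the boundary $M^{\infty}$ via the gradient-norm/recession-function duality, whereas Kirwan's flow here runs on the compact $\mathbb{P}(V)$ and the limit is a genuine interior point, captured by the Lojasiewicz analytic machinery rather than by convex-analytic inf-sup duality.
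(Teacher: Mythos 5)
Your proposal is correct and is exactly the route the paper indicates: the text introducing Theorem~\ref{thm:limit} states that $\zeta \mapsto \|\mu(\zeta)\|^2/2$ is real-analytic on the compact manifold $\mathbb{P}(V)$ and that convergence follows ``by a standard argument of Lojasiewicz gradient inequality,'' citing \cite[Theorem 3.3]{GRS_Book}. You have simply filled in the standard trapping/finite-length argument that the paper defers to the reference.
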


Further, the limit $\zeta_{\infty}$ attains the infimum of the moment-map norm 
over the orbit $G \cdot [v]$ in $\mathbb{P}(V)$.
\begin{Thm}[{Moment-limit theorem \cite[Theorem 6.4]{GRS_Book}}]\label{thm:moment_limit}
Let $\zeta(t)$ be the solution of (\ref{eqn:KirwanGR}), 
and let $\zeta_{\infty} := \lim_{t \to \infty} \zeta(t)$.
Then it holds 
\begin{equation}\label{eqn:mu(zeta_infty)}
\|\mu(\zeta_{\infty})\| = \inf_{g \in G} \|\mu(g \cdot [v])\|.
\end{equation}
\end{Thm}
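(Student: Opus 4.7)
The plan is to extract the moment-limit theorem from the strong-duality result in Theorem~\ref{thm:continuous_main}, via a correspondence between the Kirwan flow on $\mathbb{P}(V)$ and the gradient flow of the Kempf-Ness function $F_v$ on $P_n \cap G$.

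First I would establish the easy bound $\|\mu(\zeta_\infty)\| \ge \inf_{g \in G}\|\mu(g \cdot [v])\|$. Because $\zeta(t)$ is the gradient flow of $\|\mu\|^2/2$, the function $t \mapsto \|\mu(\zeta(t))\|$ is non-increasing. A standard identity for Hamiltonian actions expresses $\nabla(\|\mu\|^2/2)(\zeta)$ as the value at $\zeta$ of the fundamental vector field of $i\mu(\zeta) \in i\mathfrak{k}$, so the Kirwan flow is tangent to $G$-orbits and $\zeta(t)$ stays in $G \cdot [v]$ for all finite $t$. Continuity of $\mu$ on $\mathbb{P}(V)$ then passes the bound to $\zeta_\infty$.

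Next I would lift $\zeta(t)$ to a curve $g(t) \in G$ with $g(0) = I$ and $\zeta(t) = g(t)\cdot[v]$. The fundamental-vector-field identity rewrites the Kirwan equation as the ODE $\dot g(t) g(t)^{-1} = -i\mu(\zeta(t)) = -\tilde\mu(\pi(g(t))v)$ in $\mathfrak{g}$. Setting $x(t) := g(t)^\dagger g(t) \in P_n \cap G$ and differentiating, using that $\tilde\mu$ takes values in the Hermitian matrices $i\mathfrak{k} \subset \mathfrak{p}_n$ together with Lemma~\ref{lem:grad=mu}~(2), one verifies that $x(t)$ satisfies $\dot x(t) = -c(t)\,\nabla F_v(x(t))$ for some positive scalar $c(t)$, i.e.\ it traces the gradient flow of $F_v$ on $P_n \cap G$ up to a time reparameterization that does not affect the limiting value of any monotone quantity along the trajectory.

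Then, using Lemma~\ref{lem:grad=mu}~(2), the identity $\|\mu(\zeta(t))\| = \|\tilde\mu(\pi(x(t)^{1/2})v)\| = \|\nabla F_v(x(t))\|$ holds. In the unstable case $\inf_g \|\mu(g\cdot[v])\| > 0$, Theorem~\ref{thm:continuous_main} combined with (\ref{eqn:|grad|=|mu|=|p|}) yields
\[
\lim_{t\to\infty}\|\mu(\zeta(t))\| \;=\; \inf_{x \in P_n \cap G}\|\nabla F_v(x)\| \;=\; \inf_{g \in G}\|\mu(g\cdot[v])\|,
\]
and the semistable case $\inf_g \|\mu(g\cdot[v])\| = 0$ follows in the same fashion from Corollary~\ref{cor:duality}. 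Combined with Theorem~\ref{thm:limit}, which guarantees the existence of $\zeta_\infty$, this gives the desired equality (\ref{eqn:mu(zeta_infty)}).

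The main obstacle is the lifting step in the second paragraph: the Kirwan flow on $\mathbb{P}(V)$ and the gradient flow of $F_v$ on $P_n \cap G$ live on distinct manifolds with different Riemannian metrics, and the identification requires a careful ODE computation and tracking of the time rescaling $c(t)$. A secondary subtlety is that Theorem~\ref{thm:continuous_main} assumes $\kappa^\ast > 0$, so the semistable case must be dispatched separately via Corollary~\ref{cor:duality}.
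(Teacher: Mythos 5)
Your proposal takes essentially the same route as the paper: relate the Kirwan flow on $\mathbb{P}(V)$ to the gradient flow of the Kempf--Ness function $F_v$ on $P_n \cap G$, then invoke Lemma~\ref{lem:grad=mu}~(2) and Theorem~\ref{thm:continuous_main}. The paper's Proposition~\ref{prop:relation} gives the precise correspondence $\zeta(t) = x(2t)^{1/2}\cdot[v]$, established in the appendix via a careful differential computation (lifting to $\Phi_v$ on $G$, Lemma~\ref{lem:conjugacy}, and a Lyapunov-equation identity for $d(x^{1/2})$). Your lift $\dot g g^{-1} = -\tilde\mu(\pi(g)v)$ with $x = g^\dagger g$ is a slightly more direct variant of the same idea, and a short computation with $g = u\,x^{1/2}$, $u\in K$, shows the factor $c(t)$ is exactly the constant $2$; you leave it as an undetermined positive scalar.

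That hedge is the one genuine soft spot. The assertion that a positive time-reparameterization ``does not affect the limiting value of any monotone quantity'' is not automatic: if $\int_0^\infty c(\tau)\,d\tau < \infty$, the reparameterized trajectory for $t \to \infty$ only explores a bounded interval of the canonical gradient flow, and the limits need not agree. You must either compute $c(t)$ explicitly (it is $2$, so the issue vanishes) or show $\int_0^\infty c = \infty$ by some other argument. As stated the step is a gap, even though it is easily patched. Two smaller remarks: the opening ``easy bound'' paragraph is unnecessary once you have the exact equality from the flow correspondence; and for the semistable case $\kappa^*=0$, Corollary~\ref{cor:duality} alone does not yield $\lim_t\|\nabla F_v(x(t))\| = 0$ --- what you actually need is that the argument in the proof of Theorem~\ref{thm:continuous_main} shows $\kappa := \lim_t\|\nabla F_v(x(t))\|$ cannot be strictly positive when $\kappa^*=0$ (otherwise Proposition~\ref{prop:f^infty}~(1) would force $\kappa^*>0$, a contradiction), so $\kappa = 0 = \kappa^*$.
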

The equality (\ref{eqn:mu(zeta_infty)}) can be understood from Theorem~\ref{thm:continuous_main} as follows.
Regard $G$ as a Riemannian manifold by the right-invariant Riemannian metric 
$\langle X,Y\rangle_g := Re \trace Xg^{-1} (Yg^{-1})^{\dagger}$ for $X,Y \in T_g, g \in G$, consider the gradient flow of $F_v$ on $G$:
\begin{equation}\label{eqn:GR_G}
\dot g(t) = -\nabla F_v(g(t)), \quad g(0) = I.
\end{equation}
Then, the solution $\zeta(t)$ is obtained from the action of $g(t)$ as follows:
\begin{Thm}[{\cite[Theorem 4.1 (ii)]{GRS_Book}}]\label{prop:relation}
The solution $\zeta(t)$ of (\ref{eqn:KirwanGR}) 
is represented as $\zeta(t) = g(t) \cdot [v]$ 
for the solution $g(t)$ of (\ref{eqn:GR_G}). 
\end{Thm}
\begin{proof}[Proof sketch]
Define $\varphi:G \to \mathbb{P}(V)$ by $g \mapsto g\cdot [v]$. 
Then, by adapting \cite[(4.3)]{GRS_Book} with our notation, it holds 
$d \varphi_{g} \nabla F_v(g) = \nabla \frac{\|\mu(g \cdot [v])\|^2}{2}$.
Thus $(d/dt) (g(t) \cdot [v]) = (d/dt) \varphi(g(t)) = d\varphi_{g(t)} \dot g(t) =  
- d\varphi_{g(t)} \nabla F_v(g(t)) = - \nabla \frac{\|\mu(g(t) \cdot [v])\|^2}{2}$, 
implying that 
$g(t) \cdot [v]$ is the solution $\zeta(t)$ of (\ref{eqn:KirwanGR}).
\end{proof}
We can see that $\nabla F_v(g) = \mu(\pi(g)v)g$ and 
(\ref{eqn:DGR_G}) is the discretization (gradient descent) of (\ref{eqn:GR_G}). 
Analogously to Lemma~\ref{lem:x_k-g_k},
the relation between $x(t)$ and $g(t)$ is given by
\begin{Lem}\label{lem:x(t)-g(t)}
$x(2t) = g(t)^{\dagger}g(t).$
\end{Lem}
\begin{proof}
For $H \in T_{g^\dagger g}(P_n \cap G)$, 
it holds $\langle \nabla f_v(g^{\dagger}g),H\rangle_{g^{\dagger}g} = \frac{d}{dt}\mid_{t = 0} f_v(g^{\dagger}e^{t g^{-\dagger} H g^{-1}}g) =  \frac{d}{dt}\mid_{t =0} 2 F_v(e^{t g^{-\dagger} H g^{-1}/2}g) = \langle \nabla F_v(g), g^{-\dagger}H \rangle_{g}  
= \langle  g^{\dagger} \nabla F_v(g) + \nabla F_v(g)^{\dagger}g,H/2\rangle_{g^{\dagger}g}$.
Hence, it holds that $2 \nabla f_v(g^{\dagger}g) =  g^{\dagger} \nabla F_v(g) + \nabla F_v(g)^{\dagger}g$, and 
\begin{equation*}
\frac{d}{ds}(g(s)^{\dagger}g(s)) = \dot g(s)^{\dagger} g(s) + g(s)^{\dagger} \dot g(s) = 
 - \nabla F_v(g(s))^{\dagger}g(s) - g(s)^{\dagger} \nabla F_v(g(s)) = - 2 \nabla f_v(g(s)^{\dagger}g(s)).
\end{equation*}
Thus $x(t) := g(t/2)^{\dagger}g(t/2)$ satisfies (\ref{eqn:GR_F}).
\end{proof}
Therefore, 
the moment-limit theorem (Theorem~\ref{thm:moment_limit}) follows 
from $\|\mu(\zeta_{\infty})\| = \lim_{t \to \infty} \|\mu (\pi(g(t))v)\| 
= \lim_{t \to \infty} \|\mu (\pi(x(t)^{1/2})v)\| = \inf_{x \in P_n \cap G} \|\nabla f_v(x(t))\| 
= \inf_{g \in G} \|\mu(g\cdot [v])\|$.
Accordingly, an analogue of Theorem~\ref{thm:mnp} (1) (or the continuous version of Theorem~\ref{thm:convergene_of_momentmap}) is the following.
\begin{Thm}\label{thm:limit_of_momentmap} 
Let $p^*$ be the minimum-norm point of $\varDelta_v$, and let $H(t)$ be the function defined by $x(t) = e^{t H(t)}$. 
Suppose that $\inf_{g \in G} \|\pi(g)v\| = 0$.
Then, both $\spec \mu(\pi(g(t)) v)$ and $\spec (-H(t))$ converge to $p^*$ for $t \to \infty$.
\end{Thm}

\begin{proof}
It suffices to show the claim for $H(t)$. 
By Proposition~\ref{prop:velocity_of_escape}~(2), Proposition~\ref{prop:accumulation_of_transported_gradient}, and Lemma~\ref{lem:grad=mu}~(2), it holds 
\begin{equation}
\liminf_{t \to \infty} \| \mu( \pi(x(t)^{1/2})v) + H(t) \|  = 0.
\end{equation}
The rest is the same as in the proof of Theorem~\ref{thm:convergene_of_momentmap}.
\end{proof}
Contrary to $g(t) \cdot [v]$, 
we do not know whether $x(t)^{1/2}\cdot [v]$ converges\footnote{In the earlier versions of this paper, the convergence of $x(t)^{1/2}\cdot [v]$ was stated but the proof was false.}. 
At least, if Question~\ref{que:convergence} is affirmative, 
then $\mu(\pi(x(t)^{1/2})v)$ converges.
On the other hand, 
$\mu(g(t) \cdot [v])$ converges to $\mu(\zeta_{\infty})$, $- H(t)$ converges to $- H_{\infty} (= - \kappa^*\xi^*)$, and they have the same spectrum $p^*$.
Therefore, there is $u_{\infty} \in K$ such that $u_{\infty} \mu(\zeta_{\infty}) u_{\infty}^\dagger = - H_{\infty}$. This fact is a part of the {\em generalized Kempf existence theorem}~\cite[Theorem 10.4, (10.9)]{GRS_Book}.  
In particular, Theorem~\ref{thm:convergene_of_momentmap}
can be viewed as a discrete version of the moment-limit theorem, 
though we do not know whether $\zeta_k := g_k \cdot [v]$ converges.

We next explain the moment-weight inequality.
The (restriction of) {\em $\mu$-weight} $w_\mu : \mathbb{P}(V) \times i\mathfrak{k} \to \RR  \cup \{\infty\}$ is defined by
\begin{equation}
w_{\mu}([v],H) :=\lim_{t \to \infty} \trace \mu(\pi(e^{tH})v) H \quad ([v] \in \mathbb{P}(V), H \in  i\mathfrak{k}= T_I(P_n \cap G)),
\end{equation}
where the existence of the limit is seen in the proof of the next lemma.
The $\mu$-weight is nothing but the recession function of $f_v^{\infty}$.
\begin{Lem}[{See \cite[Lemma 5.2]{GRS_Book}}]\label{lem:mu-weight}
$w_{\mu}([v],H) =  f^{\infty}_v(H)$.
\end{Lem}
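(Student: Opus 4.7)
The plan is to compute both sides explicitly via the spectral decomposition of the self-adjoint operator $\Pi(H)$ on $V$ and observe they agree. First I would record the standard setup: for $H \in T_I(P_n \cap G) = i\mathfrak{k}$, write $H = iX$ with $X \in \mathfrak{k}$; $K$-invariance of the Hermitian inner product on $V$ forces $\pi(K)\subseteq U(V)$, hence $\Pi(X)$ is skew-Hermitian and $\Pi(H) = i\Pi(X)$ is Hermitian. Since $e^{tH} \in P_n \cap G$ (it is positive definite because $H$ is Hermitian), and $\pi$ restricted to the one-parameter subgroup generated by $H$ is $t \mapsto e^{t\Pi(H)}$, I have $\pi(e^{tH}) = e^{t\Pi(H)}$. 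Diagonalize $\Pi(H) = \sum_{\lambda}\lambda P_{\lambda}$ with $P_{\lambda}$ the orthogonal projection onto the $\lambda$-eigenspace, and set $\lambda^{\ast}(H,v) := \max\{\lambda : P_{\lambda}v \neq 0\}$.

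Next, for the recession side, using $\exp_I sH = e^{sH}$ and orthogonality of the eigenspaces,
\[
F_v(e^{sH}) - F_v(I) = \log \frac{(e^{s\Pi(H)}v,v)}{(v,v)} = \log\frac{\sum_{\lambda} e^{s\lambda}\|P_{\lambda}v\|^2}{(v,v)}.
\]
Factoring out $e^{s\lambda^{\ast}(H,v)}$ from the sum and dividing by $s$, the subdominant terms decay exponentially, so $F_v^{\infty}(H) = \lim_{s\to\infty}s^{-1}(F_v(e^{sH}) - F_v(I)) = \lambda^{\ast}(H,v)$. For the $\mu$-weight side, the defining identity $\langle \tilde\mu(w), H\rangle = (w,\Pi(H)w)/(w,w)$ (which simplifies to $\trace \tilde\mu(w) H$ because both $\tilde\mu(w)$ and $H$ are Hermitian) gives, for $w(t) := \pi(e^{tH})v = e^{t\Pi(H)}v$,
\[
\trace\tilde\mu(w(t))H = \frac{\sum_{\lambda}\lambda\, e^{2t\lambda}\|P_{\lambda}v\|^2}{\sum_{\lambda}e^{2t\lambda}\|P_{\lambda}v\|^2}\ \xrightarrow{t\to\infty}\ \lambda^{\ast}(H,v),
\]
by the same dominant-term argument. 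Hence $\tilde w_{\mu}([v],H) = \lambda^{\ast}(H,v) = F_v^{\infty}(H)$.

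There is no real obstacle here: once $\Pi(H)$ is recognized as Hermitian and $\pi(e^{tH}) = e^{t\Pi(H)}$ is invoked, both quantities reduce to the same elementary limit of a log-sum-exp/quotient-of-exponential-sums. The only mildly subtle points are (i) the identification $T_I \cong i\mathfrak{k}$ and the self-adjointness of $\Pi(H)$, which hinges on the $K$-invariance of $(\cdot,\cdot)$, and (ii) handling the case $\lambda^{\ast}(H,v) = +\infty$ trivially, for which $V$ finite-dimensional forbids. The entire argument is structure-preserving bookkeeping once the spectral decomposition is in place.
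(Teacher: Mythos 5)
Your proof is correct, but it takes a genuinely different route from the paper's. The paper's one-line proof exploits the already-established infrastructure: it uses $\pi(e^{2tH})v$ having norm-squared $\|\pi(e^{tH})v\|^2$, the convexity of $t \mapsto F_v(e^{2tH})$ to pass from the asymptotic slope $\lim_{t\to\infty} F_v(e^{2tH})/t$ to the limit of the derivative $\lim_{t\to\infty}(d/dt)F_v(e^{2tH})$, and then Lemma~\ref{lem:grad=mu}~(2) (gradient $=$ moment map) to identify that derivative with $2\trace\tilde\mu(\pi(e^{tH})v)H$. You instead bypass both the convexity argument and Lemma~\ref{lem:grad=mu}: you diagonalize the Hermitian operator $\Pi(H)$, reduce both sides to explicit quotients of exponential sums indexed by the spectrum, and observe that both converge to the same dominant weight $\lambda^*(H,v)=\max\{\lambda:P_\lambda v\neq 0\}$. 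Your approach is more self-contained and makes the common value $\lambda^*(H,v)$ visible (which is how the $\mu$-weight is usually described in the GIT literature), at the cost of unpacking the spectral structure rather than reusing the established manifold machinery; the paper's approach is shorter precisely because it leans on convexity and the moment-map identity already proved. Both are valid. One small bookkeeping note: the identity $\langle\tilde\mu(w),H\rangle = (w,\Pi(H)w)/(w,w)$ is the defining relation for $\tilde\mu$, and it equals $\trace\tilde\mu(w)H$ because $\langle X,Y\rangle = Re\trace XY^\dagger$ and both $\tilde\mu(w)$ and $H$ lie in $i\mathfrak{k}$ (hence are Hermitian), making $\trace\tilde\mu(w)H$ real; your parenthetical phrases this slightly backwards (it reads as if the trace form were primary) but the substance is fine.
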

\begin{proof} 
By recalling (\ref{eqn:recession_function}), 
it holds $f^{\infty}_v(H) = \lim_{t \to \infty} (d/dt) f_v(e^{tH}) 
= \lim_{t \to \infty} \trace \mu (\pi(e^{tH})v) H =  w_{\mu}([v],H)$, 
where the second equality follows from Lemma~\ref{lem:grad=mu}~(2).
\end{proof}
We now state the main part of the theory of moment-weight inequality (for linear actions).  
\begin{Thm}[{Moment-weight inequality \cite[Theorems 6.7, 10.1, 10.2, 10.4]{GRS_Book}}]\label{thm:moment_weight}
It holds
\begin{equation}\label{eqn:moment_weight}
\inf_{g \in G} \|\mu(g \cdot [v])\| \geq \sup_{H \in i\mathfrak{k} \setminus \{0\}}  \frac{- w_{\mu}([v],H)}{\|H\|}.
\end{equation}
Suppose that $\kappa^* := \inf_{g \in G} \|\mu(g \cdot [v])\| > 0$. 
Then the equality in (\ref{eqn:moment_weight}) holds, and the supremum is attained by unique $H^* \in i\mathfrak{k}$ with $\|H^*\| =1$, obtained as follows:
Let $H(t)$ be defined by
$
x(t) = e^{tH(t)}
$
for solution $x(t)$ of (\ref{eqn:GR_F}). Then the limit $H_{\infty} := \lim_{t \to \infty}H(t)$ exists, $\|H_{\infty}\|  = \kappa^*$, and $H^* = H_{\infty}/\|H_{\infty}\|$.
\end{Thm}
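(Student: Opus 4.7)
The plan is to derive all three assertions as direct translations of the duality and convergence theorems of Section~\ref{subsec:continuous_GR}, applied to the Kempf--Ness function $F_v$ on the Hadamard manifold $M = P_n \cap G$.

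First I would translate both sides of (\ref{eqn:moment_weight}) into quantities governed by $F_v$. The left side is identified via (\ref{eqn:|grad|=|mu|=|p|}):
$$
\inf_{g \in G}\|\mu(g\cdot[v])\| = \inf_{g \in G}\|\tilde\mu(\pi(g)v)\| = \inf_{x \in P_n \cap G}\|\nabla F_v(x)\|,
$$
using Lemma~\ref{lem:grad=mu} together with the polar decomposition $g = u x$, $u \in K$, $x \in P_n \cap G$. For the right side, Lemma~\ref{lem:mu-weight} gives $\tilde w_\mu([v],H) = F_v^\infty(H)$, and positive homogeneity of $F_v^\infty$ on $T_I \simeq CM^\infty$ yields
$$
\sup_{H \in i\mathfrak{k}\setminus\{0\}}\frac{-\tilde w_\mu([v],H)}{\|H\|} = \sup_{\xi \in S_I}-F_v^\infty(\xi)
$$
under the canonical identification $S_I \simeq M^\infty$.

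After this translation, the inequality (\ref{eqn:moment_weight}) is immediate from Lemma~\ref{lem:weakduality}, since $\sup_{\xi \in B^\infty}-F_v^\infty(\xi) \ge \sup_{\xi \in S_I}-F_v^\infty(\xi)$. Under the assumption $\kappa^* > 0$, Theorem~\ref{thm:continuous_main} upgrades this to equality: the gradient flow $x(t)$ of (\ref{eqn:GR_F}) converges in cone topology to the unique minimizer $\xi^* \in M^\infty$ of $F_v^\infty$, so
$$
\kappa^* = \inf_x\|\nabla F_v(x)\| = -F_v^\infty(\xi^*).
$$
Thus the supremum in (\ref{eqn:moment_weight}) is attained at $H^* = \xi^* \in S_I$, and uniqueness of this maximizer follows from Proposition~\ref{prop:f^infty}~(2).

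It remains to describe $H^*$ through $x(t)$. Define $H(t) \in i\mathfrak{k}$ by $x(t) = e^{tH(t)}$. Since the geodesic from $I$ in direction $H(t)$ is $s \mapsto e^{sH(t)}$ with constant speed $\|H(t)\|$, we have
$$
\|H(t)\| = \frac{d(I,x(t))}{t}, \qquad \frac{H(t)}{\|H(t)\|} = u(t),
$$
where $u(t) \in S_I$ is the unit direction with $x(t) = \exp_I(d(I,x(t))\,u(t))$. Proposition~\ref{prop:velocity_of_escape} gives $\|H(t)\| \to \kappa^*$, and Theorem~\ref{thm:continuous_main}~(2) gives $u(t) \to \xi^*$, so the product $H(t) = \|H(t)\|\,u(t)$ converges to $H_\infty := \kappa^*\xi^*$, which satisfies $\|H_\infty\| = \kappa^*$ and $H_\infty/\|H_\infty\| = \xi^* = H^*$.

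The only subtlety is the bookkeeping between the two different normalizations: $H(t)$ has norm equal to the velocity of escape $d(I,x(t))/t$, whereas $u(t)$ is the unit direction used in the cone-topology convergence of Theorem~\ref{thm:continuous_main}~(2). Once the identity $H(t) = (d(I,x(t))/t)\,u(t)$ is unwound, the theorem reduces to combining these two facts, and no additional analytic work is needed beyond what is already established in Section~\ref{subsec:continuous_GR}.
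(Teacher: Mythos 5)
Your proof is correct and follows exactly the route the paper indicates after stating the theorem: translating both sides of the inequality via Lemma~\ref{lem:grad=mu} and Lemma~\ref{lem:mu-weight}, then invoking weak duality (Lemma~\ref{lem:weakduality}) for the inequality, strong duality (Theorem~\ref{thm:continuous_main}) for the equality case, and Proposition~\ref{prop:velocity_of_escape} together with the identity $H(t) = (d(I,x(t))/t)\,u(t)$ to obtain $H_\infty = \kappa^*\xi^*$. The bookkeeping between $H(t)$ and the unit direction $u(t)$ is handled correctly, so no gap remains.
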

From our convex-optimization perspective, 
the moment-weight inequality (\ref{eqn:moment_weight}) is explained by the weak duality (Lemma~\ref{lem:weakduality}).
The equality case is explained by the strong duality (Theorem~\ref{thm:continuous_main}), the gradient-flow construction of the unique minimizer of $f^{\infty}_v$, and 
the formula of the velocity of escape (Proposition~\ref{prop:velocity_of_escape}).

We finally state one well-known important uniqueness property of minimizers of 
the moment-map norm over $\overline{G \cdot [v]}$, 
\begin{Thm}[{Second Ness uniqueness theorem~\cite[Theorem 6.5]{GRS_Book}}]\label{thm:Ness_second}
For $\zeta,\zeta' \in \overline{G\cdot[v]}$, if 
$\|\mu(\zeta)\| = \|\mu(\zeta')\| = \inf_{g \in G} \|\mu(g \cdot [v])\|$, 
then $\zeta' \in K \cdot \zeta$.
\end{Thm}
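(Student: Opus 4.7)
The plan is to combine the moment-weight equality (Theorem~\ref{thm:moment_weight}) with a reduction to a ``semistable slice'' on which one can invoke the First Ness uniqueness theorem (uniqueness of zero-moment minimizers up to $K$). I would proceed in three steps.

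First, I would show that any minimizer $\zeta \in \overline{G\cdot[v]}$ of $\|\mu\|$ is a critical point of $\|\mu\|^2/2$. Indeed, the gradient flow (\ref{eqn:KirwanGR}) initialized at $\zeta$ is non-increasing in $\|\mu\|^2$; since $\zeta$ already attains the orbit-closure infimum, the flow must be stationary, so $\nabla\|\mu\|^2(\zeta)=0$. A direct computation (using the Hamiltonian identity expressing $\nabla\|\mu\|^2$ via the infinitesimal action of $\mu(\zeta)$) then gives $\Pi(i\mu(\zeta))w = c w$ whenever $\zeta=[w]$, i.e. $[w]$ is fixed by the $1$-PSG $t\mapsto e^{-ti\mu(\zeta)}$.

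Second, I would apply Theorem~\ref{thm:moment_weight} with $\zeta$ (resp.\ $\zeta'$) as the base point. Since the infimum over $G\cdot\zeta$ coincides with the infimum over $G\cdot[v]$ (because $\zeta\in\overline{G\cdot[v]}$ already attains $\kappa^*$), the uniqueness of the minimizer $H^*\in S_I$ of the recession function, together with the $K$-equivariance $F^\infty_{\pi(k)v}(H)=F^\infty_v(\mathrm{Ad}_{k^{-1}}H)$, forces $i\mu(\zeta)$ and $i\mu(\zeta')$ to lie in a common $K$-orbit in $i\mathfrak{k}$. Replacing $\zeta'$ by a suitable $K$-translate, we may assume $\mu(\zeta)=\mu(\zeta')=: iH^*$.

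Third, I would exploit the critical-point structure to reduce to a smaller reductive action. Let $F\subseteq\mathbb{P}(V)$ be the fixed-point set of the $1$-PSG $e^{tH^*}$ and let $Z:=Z_G(H^*)$ be its centralizer, which is a connected reductive subgroup with maximal compact $K\cap Z$. By the first step, both $\zeta,\zeta'\in F$, and the moment map for the restricted $Z$-action on $F$ is $\mu_F=\mathrm{pr}_{\mathfrak{z}\cap\mathfrak{k}}\circ\mu$, so that $\mu_F(\zeta)=\mu_F(\zeta')=iH^*$ is central in $\mathfrak{z}$. Shifting by this central value yields a zero-moment situation on $F$, at which point the First Ness uniqueness theorem for the $Z$-action on $F$ (both $\zeta$ and $\zeta'$ have closed $Z$-orbits because they minimize the shifted norm squared in $\overline{Z\cdot\zeta}\cup\overline{Z\cdot\zeta'}$) yields $\zeta'\in(K\cap Z)\cdot\zeta\subseteq K\cdot\zeta$.

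The main obstacle is the last step: verifying that $\zeta$ and $\zeta'$ really lie in a common $Z$-orbit closure in $F$, so that First Ness uniqueness is legitimately applicable. This is where the Kirwan stratification enters: the gradient flow of $\|\mu\|^2$ on $\overline{G\cdot[v]}$ deposits all points asymptotic to the critical level $\kappa^*$ into the same unstable manifold, whose intersection with $F$ is a single $Z$-orbit up to the shifted zero-moment slice. Formalizing this retraction is precisely the classical ingredient that the gradient-flow convergence results of Section~\ref{sec:gradientflow} do not directly supply, which is the reason this uniqueness statement is only stated and not derived from the duality framework of the paper.
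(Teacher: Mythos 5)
The paper does not prove this statement: it is quoted verbatim from \cite[Theorem 6.5]{GRS_Book}, and the authors explicitly remark just before the statement that it ``seems not follow from our approach directly.'' So there is no in-paper proof to compare your sketch against; your task here was essentially to reconstruct the Ness/GRS argument from scratch.

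Your Steps 1 and 2 are sound. Step 1 (any minimizer of $\|\mu\|$ over the orbit closure is a critical point of $\|\mu\|^2/2$, hence fixed by the $1$-PSG generated by $i\mu(\zeta)$) is a standard consequence of the fact that the Kirwan flow preserves $\overline{G\cdot[v]}$ and strictly decreases $\|\mu\|^2$ off the critical set. Step 2 (after a $K$-translation one may assume $\mu(\zeta)=\mu(\zeta')$) is correct and, in the notation of this paper, follows most directly from the uniqueness of the minimum-norm point $p^*$ of the moment polytope $\varDelta_v$ and the defining property of the Weyl chamber ${\cal C}_\pi$ that $\spec$ parametrizes $K$-conjugacy classes in $i\mathfrak{k}$.

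The genuine gap is where you say it is, in Step 3, and it is larger than your wording suggests. Reducing to the centralizer $Z=Z_G(H^*)$ acting on the fixed-point component $F$ and shifting the moment map is the right move, but the First Ness uniqueness theorem identifies zero-moment points \emph{within the closure of a single orbit}; it gives you nothing unless you already know that $\zeta$ and $\zeta'$ lie in a common $Z$-orbit closure inside $F$. Nothing in this paper's duality framework --- the recession-function strong duality, the gradient-flow convergence to $M^\infty$, or the moment-limit theorem --- supplies that. In \cite{GRS_Book} this is handled by a Lojasiewicz-type estimate showing that the Kirwan flow retracts a neighborhood of the minimal critical level onto the $K$-orbit of the flow limit $\zeta_\infty$, together with a careful analysis showing every minimal critical point in $\overline{G\cdot[v]}$ is reached in this way; the ``Kirwan stratification'' you invoke is a label, not a proof, and formalizing the retraction is the bulk of the theorem. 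Your proposal is an honest outline of the classical strategy, and you correctly diagnose that this last step is precisely what falls outside the paper's toolbox --- which is exactly why the paper states the theorem as a citation rather than deriving it.
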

In the next subsection,  
we characterize such minimizers for the left-right action.

\subsection{Operator scaling and its gradient-flow limit}\label{subsec:operatorscaling}

Let $A = (A_1,A_2,\ldots,A_N) \in \CC^{N(n \times m)}$ be an $N$-tuple of $n \times m$ matrices over $\CC$.
Let  $p \in \RR^n_+$, $q \in \RR^m_+$ be nonnegative vectors
with the same sum $\sum_{i} p_i = \sum_j q_j$, where 
$p, q$ are arranged as
\begin{equation}\label{eqn:decreasing_increasing}
    p_1 \geq p_2 \geq \cdots \geq p_n, \quad  q_1 \leq q_2 \leq  \cdots \leq q_m.
\end{equation}
The {\em operator scaling} problem, originally introduced by Gurvits~\cite{Gurvits2004} 
for $p =q = {\bf 1}$ and extended by Franks~\cite{Franks2018} for general $p,q$,  is to ask: 
For a given accuracy $\epsilon \geq 0$, find $g \in GL_n, h \in GL_m$ 
such that
\begin{equation}\label{eqn:(p,q)-scalable}
\left\| \sum_{\ell=1}^N g A_\ell h^{\dagger}h A_\ell^{\dagger} g^{\dagger} - \diag p \right\|^2 + 
\left\| \sum_{\ell=1}^N h A_\ell^{\dagger} g^{\dagger}g A_\ell h^{\dagger} - \diag q \right\|^2 \leq \epsilon^2,
\end{equation}
where the norm is the Frobenius norm.
A matrix tuple $A$ is said to be (approximately) {\em $(p,q)$-scalable} 
if for every positive $\epsilon > 0$ there are $g \in GL_n,h \in GL_m$ 
satisfying (\ref{eqn:(p,q)-scalable}).
If some $g,h$ satisfy (\ref{eqn:(p,q)-scalable}) for $\epsilon = 0$, then $A$ 
is called {\em exactly $(p,q)$-scalable}, and $gAh^{\dagger}$ 
is called a {\em $(p,q)$-scaling} of $A$.
The operator scaling is a quantum generalization of the matrix scaling, 
and turned out to have rich applications; see \cite{Franks2018,GGOW_BrascampLieb,GGOW,GargOliveira2018}.
For simplicity, we assume that the left and right common kernels of $A$ are both trivial:
$\bigcap_{\ell}\ker A_\ell = \{0\}$ and $\bigcap_{\ell} \ker A^{\dagger}_\ell = \{0\}$. 

In view of the previous section, 
the operator scaling is  interpreted  
as the moment polytope membership of  
the {\em left-right action} $\pi: SL_n \times SL_m \to GL(\CC^{N(n \times m)})$ defined by
\begin{equation}
\pi(g,h)(B) := gBh^{\dagger} = (gB_1h^{\dagger}, gB_2h^\dagger,\ldots, gB_N h^{\dagger}),
\end{equation}
where $B = (B_1,B_2,\ldots,B_N) \in \CC^{N(n \times m)}$.
A maximal compact subgroup $K$ of $SL_n \times SL_m$ is given by $SU_n \times SU_m$, 
and a $K$-invariant Hermitian product $\langle,\rangle$ on $V= \CC^{N(n \times m)}$ 
is given by $\langle B,C\rangle := \sum_{\ell=1}^{N} \trace B_\ell C_\ell^{\dagger}$.  
From $\Pi(X,Y)(B) = XB+BY^{\dagger}$, we see that 
the moment map $\mu: \CC^{N(n \times m)} \to \mathfrak{p}^1_n \times \mathfrak{p}^1_m$ is given by
\begin{equation}\label{eqn:mu(B)}
\mu(B)= (\mu_1(B),\mu_2(B)) = \frac{1}{\|B\|^2 } \left(\sum_{\ell=1}^N B_\ell B_\ell^{\dagger}, \sum_{\ell=1}^N B_\ell^{\dagger} B_\ell\right) -\left(\frac{1}{n} I, \frac{1}{m}I \right). 
\end{equation}
A positive Weyl chamber is taken as 
the set of diagonal matrices $(\diag p, \diag q)$ with $p,q$ 
satisfying (\ref{eqn:decreasing_increasing}) and ${\bf 1}^{\top}p={\bf 1}^{\top}q=0$.
We regard it as a subset of $\RR^n \times \RR^m$.
Then the moment polytope $\varDelta_A$ consists of 
vectors of eigenvalues of $\mu(B)$ over $B \in \overline{SL_n \cdot A \cdot SL_m}$ 
(the closure  of the $SL_n \times SL_m$-orbit of $A$).
Comparing (\ref{eqn:mu(B)}) with (\ref{eqn:(p,q)-scalable}), we have:
\begin{Lem}\label{lem:(p,q)-scalable}
$A$ is $(p, q)$-scalable if and only 
if $(p/c - {\bf 1}/n, q/c - {\bf 1}/m)$ belongs to $\varDelta_A$, where $c := \sum_i p_i = \sum_j q_j$.
\end{Lem}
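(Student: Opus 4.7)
The plan is to translate both sides of the equivalence into statements about $\tilde\mu(B)$ for $B$ in the $GL_n\times GL_m$-orbit of $A$, and pass between the operator formulation (\ref{eqn:(p,q)-scalable}) and the spectral formulation of $\varDelta_A$ using $K$-invariance of $\spec$ together with the scale-invariance of $\tilde\mu$. The starting point is that $\tilde\mu$ in (\ref{eqn:mu(B)}) is invariant under dilations $B\mapsto\lambda B$ for $\lambda\in\CC^\ast$, and that each $(g,h)\in GL_n\times GL_m$ factors as $g=\alpha g'$, $h=\beta h'$ with $(g',h')\in SL_n\times SL_m$ and $\alpha,\beta\in\CC^\ast$, so that $gAh^\dagger=\alpha\bar\beta\,g'A(h')^\dagger$ has the same moment-map value as $g'A(h')^\dagger$. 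Consequently the set of spectres arising from $GL_n\times GL_m$-scalings coincides with that from $SL_n\times SL_m$-scalings, whose closure is $\varDelta_A$.

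For the forward direction, I would take $(g_k,h_k)\in GL_n\times GL_m$ realizing (\ref{eqn:(p,q)-scalable}) with $\epsilon_k\to 0$, and set $B^{(k)}:=g_kAh_k^\dagger$. Taking traces in $\sum_\ell B_\ell^{(k)}(B_\ell^{(k)})^\dagger\to\diag p$ gives $\|B^{(k)}\|^2\to c$, so (\ref{eqn:mu(B)}) yields $\tilde\mu(B^{(k)})\to(\diag p/c-I/n,\diag q/c-I/m)$, a diagonal pair whose entries are already sorted in accordance with (\ref{eqn:decreasing_increasing}). Continuity of eigenvalues then gives $\spec\tilde\mu(B^{(k)})\to(p/c-{\bf 1}/n,q/c-{\bf 1}/m)$, which lies in the closed set $\varDelta_A$.

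For the converse, assume $(p/c-{\bf 1}/n,q/c-{\bf 1}/m)\in\varDelta_A$ and pick $(g_k,h_k)\in SL_n\times SL_m$ with $\spec\tilde\mu(B^{(k)})\to(p/c-{\bf 1}/n,q/c-{\bf 1}/m)$, $B^{(k)}:=g_kAh_k^\dagger$. Choose unitaries $u_k\in U_n$ and $v_k\in U_m$ diagonalizing $\sum_\ell B_\ell^{(k)}(B_\ell^{(k)})^\dagger$ and $\sum_\ell(B_\ell^{(k)})^\dagger B_\ell^{(k)}$ with eigenvalues listed in orderings matching $p$ and $q$, and rescale by $\beta_k:=\|B^{(k)}\|/\sqrt c$ to define $\tilde B^{(k)}:=u_kB^{(k)}v_k^\dagger/\beta_k=g_k''A(h_k'')^\dagger$ with $g_k''=u_kg_k/\sqrt{\beta_k}\in GL_n$ and $h_k''=v_kh_k/\sqrt{\beta_k}\in GL_m$. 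A direct calculation gives $\sum_\ell\tilde B_\ell^{(k)}(\tilde B_\ell^{(k)})^\dagger=(c/\|B^{(k)}\|^2)\,u_k\bigl(\sum_\ell B_\ell^{(k)}(B_\ell^{(k)})^\dagger\bigr)u_k^\dagger\to\diag p$, and symmetrically $\sum_\ell(\tilde B_\ell^{(k)})^\dagger\tilde B_\ell^{(k)}\to\diag q$, so $(g_k'',h_k'')$ witnesses $(p,q)$-scalability.

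The only bookkeeping concern is aligning the Weyl-chamber orderings with (\ref{eqn:decreasing_increasing}), which is immediate since $p/c-{\bf 1}/n$ inherits decreasing order from $p$ and $q/c-{\bf 1}/m$ inherits increasing order from $q$; no genuine analytic obstacle arises, as the lemma is essentially a dictionary translating the Frobenius-norm scaling condition into convergence of moment-map spectres.
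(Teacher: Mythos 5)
Your proof is correct and is essentially the argument the paper leaves implicit: the paper states the lemma immediately after ``Comparing (\ref{eqn:mu(B)}) with (\ref{eqn:(p,q)-scalable}), we have:'' without supplying details. You have simply unpacked that comparison, and you correctly handle the two points that could be glossed over: the passage between $GL_n\times GL_m$ (used in the scalability definition) and $SL_n\times SL_m$ (used in the definition of $\varDelta_A$) via scale-invariance of $\tilde\mu$, and the alignment of eigenvalue orderings with the Weyl-chamber convention (\ref{eqn:decreasing_increasing}), which works because $p$ is assumed decreasing and $q$ increasing so the candidate point is already in the chamber.
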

We consider 
the operator scaling problem for 
the most basic case: 
$(p,q) =  ({\bf 1}/n, {\bf 1}/m)$.
Then, it holds
\begin{equation*}
\mbox{$A$ is $({\bf 1}/n, {\bf 1}/m)$-scalable} \quad  
\Leftrightarrow  \quad  (0,0) \in \varDelta_A \quad \Leftrightarrow \quad \inf_{g,h} \| \mu(gAh^{\dagger})\| = 0.
\end{equation*}
Accordingly, 
the Kempf-Ness theorem (Theorem~\ref{thm:KNHM}) 
links with the $({\bf 1}/n, {\bf 1}/m)$-scaling problem, and is sharpened as follows. 
Let ${\cal S}_A$ denote the family of pairs of vector subspaces $X \subseteq \CC^n$, $Y \subseteq \CC^m$
such that $u^{\top} A_\ell \bar v = 0$ 
for all $u \in X$, $v \in Y$, $\ell \in [N]$.
This is (essentially) the same as the family of {\em independent} subspace pairs in \cite{Franks2018,FranksSomaGoemans_SODA2023}.
Although ${\cal S}_A$ is an infinite set, it turns out in Lemma~\ref{lem:E_A} that a certain maximal subset ${\cal E}_A$ of ${\cal S}_A$ is a finite set. 
\begin{Thm}[Characterization of scalability~\cite{Gurvits2004}]\label{thm:Gurvits1}
    The following are equivalent:
    \begin{itemize}
    \item[(i)] $\displaystyle \inf_{g \in SL_n,h \in SL_m} \| g A h^{\dagger} \| > 0$. 
    \item[(ii)] $A$ is $({\bf 1}/n, {\bf 1}/m)$-scalable. 
    \item[(iii)] For all $(X,Y) \in {\cal S}_A$, it holds $(1/n)\dim X + (1/m)\dim Y \leq 1$.
    \end{itemize}
\end{Thm}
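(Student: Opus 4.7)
The plan is to establish the three equivalences by the cycle $(\mathrm{i}) \Leftrightarrow (\mathrm{ii})$, $(\mathrm{ii}) \Rightarrow (\mathrm{iii})$, and $\neg(\mathrm{iii}) \Rightarrow \neg(\mathrm{i})$.

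For $(\mathrm{i}) \Leftrightarrow (\mathrm{ii})$, I would apply the Kempf--Ness theorem (Theorem~\ref{thm:KNHM}) to the left-right action. Condition $(\mathrm{i})$ is exactly semi-stability of the orbit of $A$. By the equivalence of (i) and (ii) in Theorem~\ref{thm:KNHM}, semi-stability is equivalent to $\inf_{g,h} \|\tilde\mu(gAh^{\dagger})\| = 0$. Since $\|\tilde\mu\|$ depends only on the spectrum, this coincides with $\inf_{g,h}\|\spec \tilde\mu(gAh^{\dagger})\| = 0$, and by the Guillemin--Sternberg convexity theorem $\varDelta_A$ is a closed convex polytope, so the infimum $0$ is attained iff $(0,0) \in \varDelta_A$. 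By the lemma preceding the theorem, this is precisely $(\mathrm{ii})$.

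For $(\mathrm{ii}) \Rightarrow (\mathrm{iii})$, I would run a direct Frobenius-norm computation on an approximately scaled $B = gAh^{\dagger}$. Given $(X,Y) \in \mathcal{S}_A$ with $a = \dim X$, $b = \dim Y$, the pair $(X',Y') := (g^{-\top}X, h^{-\top} Y)$ lies in $\mathcal{S}_B$ with the same dimensions. Choosing orthonormal bases adapted to $X'$ and (the conjugate of) $Y'$ kills the block $B_{\ell}[X',Y']$, so
\begin{equation*}
\textstyle\sum_\ell \trace P_{X'} B_\ell B_\ell^{\dagger} = \sum_\ell \| B_\ell[X', (Y')^\perp]\|_{\mathrm{F}}^2, \qquad
\sum_\ell \trace P_{Y'} B_\ell^{\dagger} B_\ell = \sum_\ell \|B_\ell [(X')^\perp, Y']\|_{\mathrm{F}}^2.
\end{equation*}
Using the approximate scaling conditions $\sum_\ell B_\ell B_\ell^{\dagger} \approx (\|B\|^2/n)I$ and $\sum_\ell B_\ell^{\dagger}B_\ell \approx (\|B\|^2/m)I$, the left-hand sides equal $(a/n)\|B\|^2$ and $(b/m)\|B\|^2$ up to $O(\epsilon \|B\|^2)$ respectively. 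These two Frobenius contributions come from disjoint row/column blocks of each $B_\ell$, so their sum is bounded by $\|B\|^2$, giving $a/n + b/m \leq 1 + O(\epsilon)$. Letting $\epsilon \to 0$ produces $(\mathrm{iii})$.

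For $\neg(\mathrm{iii}) \Rightarrow \neg(\mathrm{i})$, I would construct an explicit destabilizer from a violating pair. Given $(X,Y) \in \mathcal{S}_A$ with $a/n + b/m > 1$, choose unitary bases so that $X$ and $Y$ correspond to the first $a$ and first $b$ coordinates (absorbing complex conjugation via a $V \in SU_m$ chosen compatibly with the sesquilinear form $u^\top A_\ell \bar v$). In these bases $(A_\ell)_{ij} = 0$ for $(i,j) \in [a]\times [b]$. Define diagonal Hermitian traceless matrices
\begin{equation*}
H = \tfrac{n-a}{n}\,\mathrm{diag}({\bf 1}_{[a]}) - \tfrac{a}{n}\,\mathrm{diag}({\bf 1}_{[n]\setminus[a]}), \qquad K = \tfrac{m-b}{m}\,\mathrm{diag}({\bf 1}_{[b]}) - \tfrac{b}{m}\,\mathrm{diag}({\bf 1}_{[m]\setminus[b]}).
\end{equation*}
Then for any $(i,j) \notin [a]\times [b]$ the eigenvalue sum $\alpha_i + \beta_j$ equals one of $1 - a/n - b/m$ (from the off-diagonal blocks $[a]\times([m]\setminus[b])$ and $([n]\setminus[a])\times[b]$) or $-a/n - b/m$ (from $([n]\setminus[a])\times([m]\setminus[b])$), all strictly negative under the hypothesis $a/n + b/m > 1$. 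Hence $F^{\infty}_A(H,K) = \max_{(A_\ell)_{ij}\neq 0}(\alpha_i + \beta_j) < 0$, so by Proposition~\ref{prop:f^infty}(1) (or directly by Hilbert--Mumford in Theorem~\ref{thm:KNHM}) the orbit is unstable, i.e., $\neg(\mathrm{i})$.

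The main obstacle I expect is the careful handling of complex conjugation in Step~3: the defining condition $u^\top A_\ell \bar v = 0$ is linear in $u$ but conjugate-linear in $v$, so the simultaneous diagonalization requires absorbing the conjugation into the choice of unitary basis on the right, and ensuring that the resulting action of $(H,K)$ on the symmetric space $P_n^1 \times P_m^1$ (rather than on an auxiliary conjugate space) is what produces the recession function as claimed. Once this bookkeeping is set up, the eigenvalue computation is routine.
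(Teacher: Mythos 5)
The paper cites this theorem to Gurvits and does not supply its own proof, so there is no in-paper argument to compare against; the following is a review of your proposal on its own terms. The cycle of implications is correct. The reduction (i) $\Leftrightarrow$ (ii) via Theorem~\ref{thm:KNHM}, the Guillemin--Sternberg convexity theorem, and the membership lemma for $\varDelta_A$ is exactly the route the paper's framework invites. Your destabilizer construction for $\neg(\mathrm{iii}) \Rightarrow \neg(\mathrm{i})$ agrees with the paper's own remark immediately after the theorem statement; the conjugation issue you flag dissolves under the paper's convention --- pick $\sigma,\tau$ whose first $a$ (resp.\ $b$) \emph{rows} span $X$ (resp.\ $Y$), so that $(\sigma A_\ell\tau^{\dagger})_{ij} = u^{\top}A_\ell\bar v$ with $u$ the $i$-th row of $\sigma$ and $v$ the $j$-th row of $\tau$; then the $[a]\times[b]$ block vanishes with no conjugate subspaces involved, and (\ref{eqn:F_A^infty}) gives $F_A^{\infty}\leq 1-a/n-b/m<0$, hence $\neg(\mathrm{i})$ via the Hilbert--Mumford criterion in Theorem~\ref{thm:KNHM}. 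For (ii) $\Rightarrow$ (iii), your Frobenius block count on an $\epsilon$-scaled $B = gAh^{\dagger}$ is in the style of Gurvits' original capacity argument rather than anything the paper develops, but it is the shortest correct path. One notational slip worth repairing: since $u^{\top}B_\ell\bar v = 0$ for $u\in X', v\in Y'$ conjugates to $\tilde u^{\dagger}B_\ell\tilde v = 0$ for $\tilde u\in\overline{X'},\tilde v\in\overline{Y'}$, the orthogonal projections appearing in your trace identities should be onto $\overline{X'}$ and $\overline{Y'}$ rather than $X'$ and $Y'$; dimensions are preserved, so the bound $a/n+b/m\leq 1+O(\epsilon)$ and the conclusion are unaffected.
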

This theorem was originally stated for the case $n=m$, 
in which the condition (iii) is simply written as $\dim Y \leq \dim \sum_{\ell =1}^N A_kY$ for every subspace $Y$. A subspace violating this condition is called a {\em shrunk subspace} in \cite{FranksSomaGoemans_SODA2023,GGOW,IQS2017,IQS2018}. 
The above $n \neq m$ generalization is straightforward and 
is included in more general results 
for the operator scaling with marginals by Franks~\cite{Franks2018}.

A vector-space pair $(X,Y) \in {\cal S}_A$ violating (iii)
actually gives rise to a destabilizing $1$-PSG as follows:
Choose $\sigma \in SU_n$ and $\tau \in SU_m$ 
such that the first $r$ rows of $\sigma$ span $X$ and the first $s$ rows of $\tau$ span $Y$, where $(r,s) := (\dim X,\dim Y)$.
Then one can see that $t \mapsto (e^{t \diag ({\bf 1}_{[r]} - (r/n){\bf 1})} \sigma, e^{t \diag ({\bf 1}_{[s]} - (s/m){\bf 1})}\tau)$ 
is a destabilizing $1$-PSG.

Further, the strict inequality in (iii) brings exact scalability.
\begin{Thm}[Exact scalability~\cite{Gurvits2004}]\label{thm:Gurvits2}
If  $(1/n)\dim X + (1/m)\dim Y < 1$ for all $(X,Y) \in {\cal S}_A$ 
other than $(\{0\},\CC^m)$ and $(\CC^n,\{0\})$, then $A$ is exactly $({\bf 1}/n, {\bf 1}/m)$-scalable.
\end{Thm}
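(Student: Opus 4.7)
The plan is to deduce exact $({\bf 1}/n, {\bf 1}/m)$-scalability from the existence of a minimizer of the Kempf-Ness function $F_A$ on the Hadamard manifold $M := P_n^1 \times P_m^1$. Indeed, at any such minimizer $(x^*, y^*)$ one has $\nabla F_A(x^*, y^*) = 0$, and Lemma~\ref{lem:grad=mu}~(2) applied to the left-right action rephrases this as $\tilde{\mu}(\pi((x^*)^{1/2}, (y^*)^{1/2}) A) = 0$, i.e., an exact $({\bf 1}/n, {\bf 1}/m)$-scaling of $A$. By the existence criterion~(\ref{eqn:exists}), it therefore suffices to show
\[
\inf_{\xi \in M^\infty} F_A^\infty(\xi) \;>\; 0.
\]

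My first step is to write down the recession function explicitly. For $(H, K) \in \mathfrak{p}_n^1 \times \mathfrak{p}_m^1$ with spectral decompositions $H = \sigma \diag(\lambda) \sigma^\dagger$ and $K = \tau \diag(\mu) \tau^\dagger$, expanding $\trace(e^{tH} A_\ell e^{tK} A_\ell^\dagger) = \sum_{i,j} e^{t(\lambda_i + \mu_j)} |(\sigma^\dagger A_\ell \tau)_{ij}|^2$ and letting $t \to \infty$ yields
\[
F_A^\infty(H, K) \;=\; \max\{\lambda_i + \mu_j : (\sigma^\dagger A_\ell \tau)_{ij} \neq 0 \text{ for some } \ell\}.
\]
Since the bi-unitary substitution $A \mapsto \sigma^\dagger A \tau$ preserves the lattice $\mathcal{S}_A$ together with all its dimensions (and also $({\bf 1}/n, {\bf 1}/m)$-scalability), I may assume from now on that $H = \diag(\lambda)$ and $K = \diag(\mu)$ with $\sum_i \lambda_i = \sum_j \mu_j = 0$. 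For each $c \in \RR$ outside the finite set $\{\lambda_i\} \cup \{-\mu_j\}$, set
\[
X_c := \operatorname{span}\{e_i : \lambda_i > c\}, \qquad Y_c := \operatorname{span}\{e_j : \mu_j > -c\}.
\]
Any $i \in X_c$, $j \in Y_c$ obey $\lambda_i + \mu_j > 0$; under the hypothesis $F_A^\infty(H,K) \leq 0$ this forces $(A_\ell)_{ij} = 0$ for all $\ell$, so $(X_c, Y_c) \in \mathcal{S}_A$.

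The main argument is by contradiction: assume some unit-norm $(H, K)$ achieves $F_A^\infty(H, K) \leq 0$. The nontriviality of the common left/right kernels of $A$ rules out $H = 0$ and $K = 0$; e.g., $H = 0$ would force $F_A^\infty(0, K) = \max_j \mu_j > 0$ because every column index supports some nonzero entry and $\sum \mu_j = 0$ with $K \neq 0$ gives $\max \mu > 0$. Hence $\lambda_n < 0 < \lambda_1$ and $\mu_m < 0 < \mu_1$. The key step is a layer-cake integration of the deficit
\[
\delta(c) \;:=\; \frac{\dim X_c}{n} + \frac{\dim Y_c}{m} - 1.
\]
Splitting $1 = {\bf 1}[c<0] + {\bf 1}[c \geq 0]$ and using the positive/negative-part decomposition, the tracelessness $\sum \lambda_i = \sum \mu_j = 0$ immediately yields $\int_{-\infty}^\infty \delta(c)\, dc = 0$. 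On the other hand, for $c$ in the non-trivial range $I := (\min(\lambda_n, -\mu_1),\, \max(\lambda_1, -\mu_m))$ the pair $(X_c, Y_c)$ is neither $(\{0\}, \CC^m)$ nor $(\CC^n, \{0\})$, so the strict hypothesis combined with the integrality $m\dim X_c + n \dim Y_c \leq nm-1$ forces $\delta(c) \leq -1/(nm)$ on $I$, while $\delta(c) = 0$ outside $I$. Since $H, K \neq 0$, $I$ has positive length, whence $\int \delta\, dc < 0$, contradicting the identity above.

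The main obstacle I anticipate is the clean derivation of the layer-cake identity $\int_{-\infty}^\infty \delta(c)\,dc = 0$, which demands careful positive/negative-part bookkeeping to show that the $X$- and $Y$-contributions each integrate to zero via tracelessness. Secondary technical issues include the handling of eigenvalue coincidences (a finite, hence measure-zero, set where $X_c$ or $Y_c$ jumps) and the degenerate cases $H = 0$ or $K = 0$, both of which are disposed of using the nontriviality of the common kernels of $A$; neither of these should present real difficulty.
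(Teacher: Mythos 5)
Your argument is correct and follows the route the paper sketches: use the explicit formula~(\ref{eqn:F_A^infty}) for $F_A^{\infty}$, show that the hypothesis forces $\inf_{\xi} F_A^{\infty}(\xi) > 0$, invoke property~(\ref{eqn:exists}) to get a minimizer of $F_A$, and read off exact scalability from the vanishing gradient via Lemma~\ref{lem:grad=mu}~(2). The paper leaves the middle step (positivity of $\inf F_A^{\infty}$) entirely implicit; your layer-cake computation of $\int \delta(c)\,dc = 0$ together with the integrality bound $\delta(c) \le -1/(nm)$ on the nondegenerate interval $I$ is a clean and correct way to supply it, and it is worth noting that one could alternatively dispense with integrality by observing that $\phi(\mathcal{S}_A)$ is finite, so the strict inequality in the hypothesis already gives a uniform gap.
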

The exact case corresponds to the existence of $g,h$ with $\mu(gAh^{\dagger}) = 0$.
By Lemma~\ref{lem:grad=mu}~(2), this is the case where 
the Kempf-Ness function $f_A$ has an optimum ($=$ a point of zero gradient). 
Then, Theorem~\ref{thm:Gurvits2} can be deduced from 
general property (\ref{eqn:exists}) of the recession function $f^{\infty}_A$ (given explicitly in (\ref{eqn:F_A^infty}) below).
Here, the Kempf-Ness function $f_A: P_n^1 \times P_m^1 \to \RR$ is written as
\begin{equation}
f_A(x,y) := \log \trace \sum_{\ell=1}^N xA_\ell yA_\ell^{\dagger}  \quad (x \in P_n^1,y \in P_m^1).  
\end{equation}
\begin{Lem}[\cite{BFGOWW_FOCS2019}]\label{lem:2-smooth}
$f_A$ is $2$-smooth convex.  
\end{Lem}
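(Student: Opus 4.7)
The plan is to deduce the statement from Lemma~\ref{lem:grad=mu}(1) by computing $N_\pi$ for the left-right representation, with a direct verification as a backup. For the left-right action, a maximal torus of $K = SU_n \times SU_m$ corresponds to the traceless diagonal Hermitian pairs $(H_1,H_2) \in \mathfrak{p}_n^1 \times \mathfrak{p}_m^1$. The representation on $V = \CC^{N(n \times m)}$ decomposes as a sum of weight spaces indexed by triples $(i,j,\ell) \in [n]\times[m]\times[N]$, the weight at $(i,j,\ell)$ being the linear functional $(H_1, H_2) \mapsto (H_1)_{ii} + (H_2)_{jj}$. Under the Frobenius inner product this functional corresponds to $(e_i - \mathbf{1}/n,\; e_j - \mathbf{1}/m)$, the orthogonal projection of $(e_i, e_j)$ onto traceless diagonals, with squared norm $(n-1)/n + (m-1)/m = 2 - 1/n - 1/m \le 2$. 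Hence $N_\pi^2 \le 2$, and Lemma~\ref{lem:grad=mu}(1) yields that $F_A$ is $2$-smooth convex.

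For a self-contained verification, I would reduce to a one-variable computation along geodesics. Using the explicit form of geodesics in $P_n^1 \times P_m^1$ and the identity $F_A(x^{1/2} e^{tH'} x^{1/2}, y^{1/2} e^{tK'} y^{1/2}) = F_{gAh^{\dagger}}(e^{tH}, e^{tK}) + (\text{const.})$ for appropriate $H, K$, it suffices to establish convexity and the bound $(d^2/dt^2) F_B(e^{tH}, e^{tK}) \le 2(\|H\|^2 + \|K\|^2)$ at $t = 0$ for arbitrary tuples $B$ and traceless Hermitian $H, K$. Diagonalizing $H = U\diag(\lambda_i)U^{\dagger}$, $K = V\diag(\nu_j)V^{\dagger}$ and letting $C_\ell := U^{\dagger} B_\ell V$, a short computation gives
\[
\trace \sum_{\ell} e^{tH} B_\ell e^{tK} B_\ell^{\dagger} = \sum_{i,j} c_{ij} e^{t(\lambda_i + \nu_j)}, \qquad c_{ij} := \sum_{\ell} |(C_\ell)_{ij}|^2 \ge 0.
\]
So $t \mapsto F_B(e^{tH}, e^{tK})$ is a log-sum-exp function, which is automatically convex, and its second derivative equals the variance of the random variable $\lambda_i + \nu_j$ under the probability distribution $p_{ij} \propto c_{ij} e^{t(\lambda_i+\nu_j)}$.

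The remaining task is to bound this variance by $2(\|H\|^2 + \|K\|^2)$. Since $\sum_i \lambda_i = 0$, Cauchy--Schwarz applied to the remaining $n-1$ eigenvalues yields $\max_i |\lambda_i| \le \|H\|\sqrt{(n-1)/n}$, and similarly $\max_j |\nu_j| \le \|K\|\sqrt{(m-1)/m}$. Combining with Cauchy--Schwarz in the other direction,
\[
(\lambda_i + \nu_j)^2 \le \bigl(\|H\|\sqrt{(n-1)/n} + \|K\|\sqrt{(m-1)/m}\bigr)^2 \le \bigl(2 - \tfrac{1}{n} - \tfrac{1}{m}\bigr)(\|H\|^2 + \|K\|^2),
\]
which bounds the variance by $2(\|H\|^2 + \|K\|^2)$, proving $2$-smoothness. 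The only real obstacle is the eigenvalue estimate under the tracelessness constraint, which furnishes the sharp factor $2 - 1/n - 1/m$; relaxing to $2$ gives the clean statement.
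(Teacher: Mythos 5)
Your proof is correct, and both routes work. Note that the paper does not actually prove this lemma --- it is stated with a citation to \cite{BFGOWW_FOCS2019} --- so there is no internal proof to compare against. Your first route, deducing the bound from Lemma~\ref{lem:grad=mu}(1) by computing $N_\pi$, is really just tracing through the cited general result for the left-right representation; the weight computation $\|(e_i - {\bf 1}/n,\; e_j - {\bf 1}/m)\|^2 = 2 - 1/n - 1/m$ after projecting onto the traceless diagonals is correct. Your second, self-contained route (reduce along geodesics to $\log\sum_{i,j}c_{ij}e^{t(\lambda_i+\nu_j)}$, then bound the variance) is also sound and is essentially the standard proof of $N_\pi^2$-smoothness specialized to this action; the identity $F_A(x^{1/2}e^{tH}x^{1/2},y^{1/2}e^{tK}y^{1/2}) = F_B(e^{tH},e^{tK})$ with $B=x^{1/2}Ay^{1/2}$ and $H,K$ traceless correctly reduces to the base point $(I,I)$, and the log-sum-exp second derivative is indeed a variance bounded by $\max_{i,j}(\lambda_i+\nu_j)^2$. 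One small remark: the sharp eigenvalue estimate $|\lambda_i|\le\|H\|\sqrt{(n-1)/n}$ from tracelessness is not needed for the stated factor $2$; the trivial bound $|\lambda_i|\le\|H\|$ already gives $(\lambda_i+\nu_j)^2 \le 2(\lambda_i^2+\nu_j^2)\le 2(\|H\|^2+\|K\|^2)$. The refinement to $2 - 1/n - 1/m$ is a nice bonus that matches $N_\pi^2$ exactly, but it only clutters the argument if $2$-smoothness is all that is wanted.
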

Now Theorem~\ref{thm:inf_sup_KempfNess} (Corollary~\ref{cor:duality}, or the moment-weight inequality (Theorem~\ref{thm:moment_weight}))
sharpens (ii) $\Leftrightarrow$ (iii) of Theorem~\ref{thm:Gurvits1} 
in the following min-max (inf-sup) form:
\begin{Thm}[Duality theorem for the scalability limit of operator scaling]\label{thm:scalability_limit}
\begin{eqnarray} %
&& \inf_{g,h}  
 \left\| \left( \sum_{\ell=1}^N gA_\ell h^{\dagger}h A_\ell^{\dagger}g^{\dagger} - \frac{1}{n} I, 
 \sum_{\ell=1}^N hA_\ell^{\dagger}g^{\dagger}g A_\ell h^{\dagger} -\frac{1}{m}I \right) \right\|  \nonumber\\
&&  = \sup_{a,b, \sigma, \tau} -  \max \{ a_i + b_j \mid \exists \ell, (\sigma A_\ell \tau^{\dagger})_{ij} \neq 0\}, \label{eqn:scalability_limit}
\end{eqnarray}
where the infimum in LHS is taken over all $g \in GL_n$, $h \in GL_m$ with $\|gAh^{\dagger}\| = 1$ and
the supremum in RHS is taken over all $\sigma \in SU_n$, $\tau \in SU_m$, 
$a \in \RR^n$, $b \in \RR^m$ with $\|(a,b)\| \leq 1$ and ${\bf 1}^{\top} a = {\bf 1}^{\top} b = 0$.
\end{Thm}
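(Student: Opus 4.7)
My plan is to identify the LHS with the infimum of $\|\nabla F_A\|$ on the Hadamard manifold $M := P_n^1 \times P_m^1$, apply the strong duality (Corollary~\ref{cor:duality}) to rewrite it as $\sup_{\xi \in B^\infty} -F_A^\infty(\xi)$, and finally compute the recession function $F_A^\infty$ explicitly by spectral diagonalization to match the RHS.

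From the formula (\ref{eqn:mu(B)}) specialized to $B = gAh^\dagger$ with $\|B\| = 1$, the vector inside the norm on the LHS is exactly $\tilde\mu(gAh^\dagger)$, so the LHS equals $\inf_{g \in GL_n,\, h \in GL_m}\|\tilde\mu(gAh^\dagger)\|$. Since $\tilde\mu$ is invariant under scalar multiplication $B \mapsto \alpha B$, we may restrict $(g,h)$ to $SL_n \times SL_m$. Using the polar decomposition $g = u x^{1/2}$, $h = v y^{1/2}$ with $u \in SU_n$, $v \in SU_m$, $x \in P_n^1$, $y \in P_m^1$, together with the fact --- immediate from (\ref{eqn:mu(B)}) --- that $\tilde\mu(uBv^\dagger)$ is the $(u,v)$-conjugation of $\tilde\mu(B)$ and hence has the same norm, we reduce to $\mathrm{LHS} = \inf_{(x,y) \in M}\|\tilde\mu(x^{1/2}Ay^{1/2})\|$. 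Lemma~\ref{lem:grad=mu}(2) together with the isometry property of the parallel transport $\tau_{(x,y) \to (I,I)}$ then gives $\mathrm{LHS} = \inf_{(x,y) \in M} \|\nabla F_A(x,y)\|$.

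Since $F_A$ is convex (Lemma~\ref{lem:2-smooth}) and $M$ is Hadamard as a product of totally geodesic subspaces of $P_n, P_m$, Corollary~\ref{cor:duality} yields
\[
\inf_{(x,y) \in M}\|\nabla F_A(x,y)\| = \sup_{\xi \in B^\infty} -F_A^\infty(\xi),
\]
where $B^\infty$ identifies with the unit ball of $T_{(I,I)}M = \mathfrak{p}_n^1 \times \mathfrak{p}_m^1$. For $(H,K)$ in this ball, diagonalize $H = \sigma^\dagger \diag(p)\sigma$ and $K = \tau^\dagger \diag(q)\tau$ with $\sigma \in SU_n$, $\tau \in SU_m$, and $p \in \RR^n$, $q \in \RR^m$ of zero sum (any scalar-phase ambiguity in $\sigma,\tau$ cancels under conjugation, so we may take special unitaries). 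Then $\|(H,K)\|^2 = \|p\|^2 + \|q\|^2$, and a direct expansion gives
\[
\sum_{\ell} \trace e^{tH} A_\ell e^{tK} A_\ell^\dagger = \sum_{i,j} e^{t(p_i + q_j)} \sum_{\ell} \bigl|(\sigma A_\ell \tau^\dagger)_{ij}\bigr|^2.
\]
Taking $\log$, dividing by $t$, and sending $t \to \infty$ extracts the dominant exponent, yielding
\[
F_A^\infty(H,K) = \max\bigl\{\, p_i + q_j \;:\; \exists \ell,\; (\sigma A_\ell \tau^\dagger)_{ij} \neq 0 \,\bigr\}.
\]

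Substituting this into $\sup_{\xi \in B^\infty} -F_A^\infty(\xi)$ gives precisely the RHS of (\ref{eqn:scalability_limit}), completing the proof. I expect no real obstacle: the polar-decomposition reduction is routine, the recession-function computation is a leading-term estimate for a finite sum of exponentials, and the heavy lifting --- the strong duality between gradient-norm infimum and recession-function supremum --- is already done in Corollary~\ref{cor:duality}.
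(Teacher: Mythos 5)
Your proposal is correct and follows essentially the same route as the paper: reduce the LHS to $\inf\|\nabla F_A\|$ via the moment-map identity and polar decomposition, invoke the strong duality of Corollary~\ref{cor:duality}, and compute $F_A^{\infty}$ by diagonalizing and extracting the dominant exponent (exactly the computation the paper displays as~(\ref{eqn:F_A^infty})). You spell out the $GL \to SL \to P_n^1 \times P_m^1$ reduction a bit more explicitly than the paper, which leaves it implicit having established the analogous fact for general reductive $G$ just before, but the substance is identical.
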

Inspired by this formula, Hirai~\cite{Hirai2025}
obtained a cleaner formula by using the trace norm instead of the Frobenius norm.
\begin{proof}
It suffices to show that $-f_A^{\infty}$ is equal to the objective function of RHS in (\ref{eqn:scalability_limit}).
Here $(G,H) \in \mathfrak{p}_n^1 \times \mathfrak{p}_m^1$ is written as 
$(G,H) = (\sigma^{\dagger} \diag a \sigma,  \tau^{\dagger} \diag b \tau)$ for 
$\sigma \in SU_n, \tau \in SU_m$, $a \in \RR^n$, $b \in \RR^m$ with
${\bf 1}^{\top}a = {\bf 1}^{\top}b=0$. Then we have
\begin{eqnarray}
f_A^{\infty}(G,H) &= & \lim_{t \to \infty} \frac{1}{t} \log \trace \sum_{\ell} e^{tG}A_\ell e^{tH} A_\ell^{\dagger} = 
\lim_{t \to \infty}\frac{1}{t} \log \sum_{\ell,i,j} |(\sigma A_\ell \tau^{\dagger})_{i j}|^2 e^{t(a_i + b_j)}  \nonumber \\
&=&  \max \{ a_i + b_j \mid \exists \ell, (\sigma A_\ell \tau^{\dagger})_{ij} \neq 0\}, \label{eqn:F_A^infty}
\end{eqnarray}
where we used $\lim_{t \to \infty} \frac{1}{t} \log \sum_{k} e^{c_k + t d_k} = \max_{k} d_k$ in the last equality.
\end{proof}

In the sequel, we assume that $A$ is not $({\bf 1}/n, {\bf 1}/m)$-scalable, 
and analyze the asymptotic behavior of gradient descent for $f_A$:
\begin{equation}
(x_{k+1},y_{k+1})  = \exp_{x_k,y_k} \left( - \frac{1}{L}\nabla  f_A(x_k,y_k)\right),\quad (x_0,y_0) = (I,I), \label{eqn:DGR_F_A}
\end{equation}
where we let $L:= 2$ by Lemma~\ref{lem:2-smooth}. 
The corresponding group update (\ref{eqn:DGR_G}) in $SL_n \times SL_m$ is given by
\begin{equation}\label{eqn:DGR_F_A_G}
(g_{k+1},h_{k+1}) = \left(e^{-\frac{1}{2L} \mu_1(g_kAh_k^{\dagger})} g_k, e^{-\frac{1}{2L}\mu_2(g_kAh_k^{\dagger})} h_k \right) \quad (g_0,h_0) = (I,I).
\end{equation}
Then $(x_k,y_k) = (g_k^{\dagger}g_k, h_k^{\dagger}h_k)$ by Lemma~\ref{lem:x_k-g_k}.
We address the following problem.
\begin{Prob}\label{prob:ABC}
Characterize the following (A), (B), and (C): 
\begin{itemize}
    \item[(A)] The limit of $\spec \mu(g_kAh_k^\dagger)\quad $ ( $=$ the minimum-norm point of $\varDelta_A$).
    \item[(B)] The limit of $(x_k,y_k)$ in cone topology $\quad$ ( $=$ the unique minimizer of $f_A^{\infty}$).
    \item[(C)] The limit of $[g_k A h_k^\dagger]$ in $\mathbb{P}(\CC^{N(n \times m)})\quad $ 
                 ( $=$ the minimizer of the moment-map norm $\|\mu\|$ over 
                 $\overline{[SL_n \cdot A \cdot SL_m]}$).
\end{itemize}
\end{Prob}

We show that these are characterized by a certain simultaneous block-triangular form of $A$.
This block-triangular form is a vector-space generalization of 
the classical {\em Dulmage-Mendelsohn decomposition}~\cite{DulmageMendelsohn1958} ({\em DM-decomposition} for short) 
for a bipartite graph and its associated matrix.
We introduce our generalized DM-decomposition in a way analogous 
to \cite[Section 3]{HayashiHiraiSakabe} for the classical setting, where
the essential idea of the construction can be partly found in Ito, Iwata, and Murota~\cite{ItoIwataMurota1994}.
Iwamasa, Oki, and Soma~\cite{IOS_ICALP2025} 
pointed out that our DM-decomposition is 
a special case of the {\em Harder-Narasimhan filtration}
for generalized Kronecker quivers.

Recall the family ${\cal S}_A$ defined before Theorem~\ref{thm:Gurvits1}.
Define a map $\phi: {\cal S}_A \to \RR^2_+$ by
$$
\phi(X,Y) := (\dim X, \dim Y) \quad ((X,Y) \in {\cal S}_A).
$$
Consider the convex hull $\Conv \phi({\cal S}_A) \subseteq \RR^2_+$; 
see the left of Figure~\ref{fig:diagram}.
\begin{figure}[t]
	\begin{center}
		\includegraphics[scale=0.48]{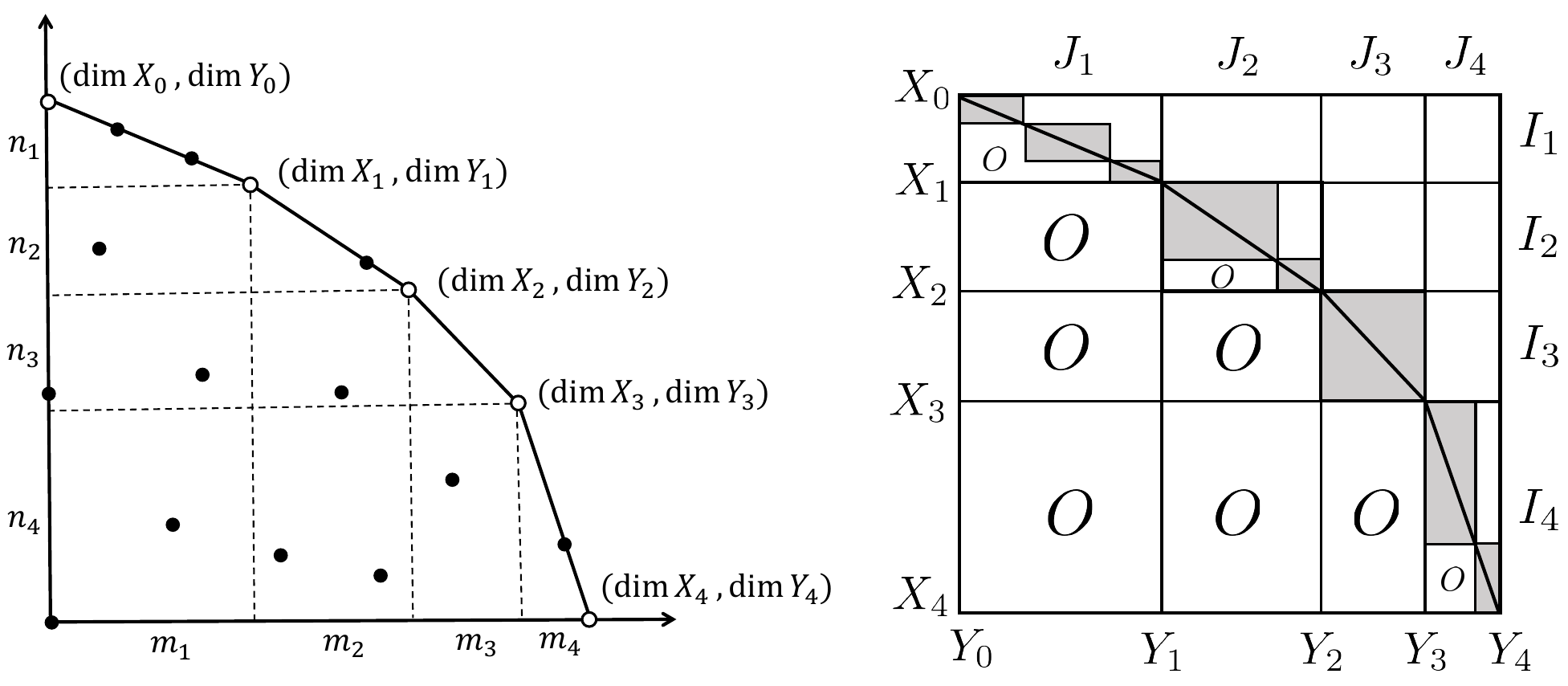}
		\caption{$\Conv \phi({\cal S}_A)$ in $(y,x)$-plane (left) 
			and a DM-decomposition of $A$ (right). The slope $n_{\alpha}/m_{\alpha}$ is increasing by the convexity of $\Conv \phi({\cal S}_A)$.}
		\label{fig:diagram}
	\end{center}
\end{figure}\noindent
Let ${\cal E}_A$ denote the subset of $(X,Y) \in {\cal S}_A$ 
such that $\phi(X,Y)$ is an extreme point of $\Conv \phi({\cal S}_A)$ not equal to $(0,0)$.
\begin{Lem}\label{lem:E_A}
	For $(X,Y),(X',Y') \in {\cal E}_A$, 
	if $\dim X \leq \dim X'$ and $\dim Y \geq \dim Y'$, 
	then $X \subseteq X'$ and $Y \supseteq Y'$.
	In particular, ${\cal E}_A$ is a finite set, and $\phi$ is injective on ${\cal E}_A$. 
\end{Lem}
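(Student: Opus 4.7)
The plan is to combine the modularity of $\phi$ on the lattice $({\cal S}_A, \wedge, \vee)$, where $L \wedge R := (X \cap X', Y+Y')$ and $L \vee R := (X+X', Y \cap Y')$, with the extremality of $\phi(L)$ and $\phi(R)$ to force $L \leq R$ in the lattice order. The injectivity part drops out by specializing to $\phi(L) = \phi(R)$.

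Set $M = L \wedge R$ and $J = L \vee R$; both lie in ${\cal S}_A$ by a direct check of the killing condition (for $u \in X \cap X'$ and $v = v_Y + v_{Y'} \in Y + Y'$, expand $u^\top A_\ell \bar v$ and use that $u \in X$ kills $Y \ni v_Y$ and $u \in X'$ kills $Y' \ni v_{Y'}$). The dimension formula gives $\phi(M) + \phi(J) = \phi(L) + \phi(R)$. Writing $a = \dim X$, $a' = \dim X'$, $b = \dim Y$, $b' = \dim Y'$, $D = a' - a \geq 0$, $D' = b - b' \geq 0$, $\delta = a - \dim(X \cap X') \geq 0$, $\delta' = \dim(Y+Y') - b \geq 0$, modularity yields $\phi(M) = (a - \delta, b + \delta')$ and $\phi(J) = (a' + \delta, b' - \delta')$. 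The desired conclusion $X \subseteq X'$ and $Y \supseteq Y'$ is equivalent to $\delta = \delta' = 0$.

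Suppose for contradiction that $\delta > 0$ or $\delta' > 0$. A short parametric calculation shows the segment $[\phi(M), \phi(R)] \subseteq \Conv \phi({\cal S}_A)$ crosses the line $x = a$ at height $b + \Delta$, and $[\phi(L), \phi(J)] \subseteq \Conv \phi({\cal S}_A)$ crosses $x = a'$ at height $b' - \Delta$, where $\Delta := (\delta' D - \delta D')/(D + \delta)$. If $\Delta > 0$, the first segment lies strictly above $\phi(L) = (a, b)$ at $x = a$, placing $\phi(L)$ strictly below the upper boundary of $\Conv \phi({\cal S}_A)$ and hence in the relative interior of the hull---a contradiction with extremality of $\phi(L)$. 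If $\Delta < 0$, the symmetric argument using $[\phi(L), \phi(J)]$ contradicts extremality of $\phi(R)$. If $\Delta = 0$, then $\phi(L)$ lies on the segment $[\phi(M), \phi(R)]$; our assumption gives $\phi(M) \neq \phi(L)$, and $\phi(L) \neq \phi(R)$ also holds (if $\phi(L) = \phi(R)$ then the midpoint argument applied directly to $L \wedge R$ and $L \vee R$, whose $\phi$-values sum to $2\phi(L)$ and which are both distinct from $\phi(L)$ whenever $L \neq R$, already contradicts extremality---this is exactly the injectivity argument). Hence $\phi(L)$ is strictly interior to $[\phi(M), \phi(R)]$, again contradicting extremality.

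The main obstacle I anticipate is the careful bookkeeping of degenerate edge cases, in particular when $D + \delta = 0$ (making $\Delta$ formally undefined) or when the four points $\phi(L), \phi(R), \phi(M), \phi(J)$ collapse in pairs; these reduce to a vertical-line analogue of the segment argument above, or directly to the injectivity case, and do not require new ideas.
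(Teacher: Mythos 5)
Your argument follows the same core idea as the paper's proof: pass to the meet $(X\cap X',Y+Y')$ and join $(X+X',Y\cap Y')$ in ${\cal S}_A$, use the modular identity $\phi(M)+\phi(J)=\phi(L)+\phi(R)$, and contradict extremality. The paper simplifies at the outset by reducing to the case where $\phi(L)$ and $\phi(R)$ are equal or adjacent extreme points, so that $[\phi(L),\phi(R)]$ lies on an edge of $\Conv\phi({\cal S}_A)$ and a point of the hull on the supporting line but off that edge is immediately a contradiction; your parametric computation of $\Delta$ is a substitute for this reduction and does more explicit work.

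There is, however, a gap in the key step. From ``the segment lies strictly above $\phi(L)$ at $x=a$'' you conclude that $\phi(L)$ is ``strictly below the upper boundary \ldots and hence in the relative interior of the hull.'' That implication is false for a planar polytope: a point strictly below the upper boundary at a given abscissa may still be a vertex of the lower or side boundary. Indeed $\phi(\CC^n,\{0\})=(n,0)$ is an extreme point of $\Conv\phi({\cal S}_A)$ and lies strictly below the upper boundary at $x=n$ whenever that hull is nondegenerate. What actually saves the argument is the downward-closedness of $\phi({\cal S}_A)$ (if $(X,Y)\in{\cal S}_A$ and $X''\subseteq X$, $Y''\subseteq Y$, then $(X'',Y'')\in{\cal S}_A$), which puts $(a,0)$ in the hull; together with $(a,b+\Delta)\in\Conv\phi({\cal S}_A)$ and $b>0$ this exhibits $\phi(L)=(a,b)$ as a proper convex combination of two distinct hull points, contradicting extremality. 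The side condition $b>0$ does hold whenever $\Delta>0$: if $b=0$ then $Y=\{0\}$ forces $Y'=\{0\}$ (by $\dim Y'\le\dim Y$) and hence $\delta'=D'=0$ and $\Delta=0$. The symmetric case $\Delta<0$ needs the same fix applied to $\phi(R)$. With these corrections your proof is sound.
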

\begin{proof}
We may suppose that $\phi(X,Y)$ and $\phi(X',Y')$ 
are equal or on an adjacent pair of extreme points. 
Observe $(X \cap X', Y + Y'),  (X + X', Y \cap Y') \in {\cal S}_A$.
By the dimension identity of vector spaces, it holds 
\begin{equation}\label{eqn:modular_equality}
\phi(X \cap X', Y + Y') + \phi(X + X', Y \cap Y') = \phi(X,Y)+ \phi(X',Y').
\end{equation}
We claim that $X' = X + X'$ and $Y' = Y \cap Y'$, which implies the statement. 
Otherwise, by (\ref{eqn:modular_equality}), $\phi(X \cap X', Y + Y')$ or $\phi(X + X', Y \cap Y')$ 
goes beyond $\Conv \phi({\cal S}_A)$, which contradicts $(X \cap X', Y + Y'),  (X + X', Y \cap Y') \in {\cal S}_A$.
\end{proof}

Therefore, ${\cal E}_{A} = \{(X_\alpha,Y_\alpha)\}_{\alpha=0}^\theta$ can be arranged as
\begin{eqnarray}
&& \CC^n = X_0 \supset X_1 \supset \cdots \supset X_\theta = \{0\}, \nonumber \\ 
&& \{0\} =Y_0 \subset Y_1 \subset \cdots \subset Y_\theta = \CC^m, \label{eqn:(X_alpha,Y_alpha)}
\end{eqnarray}
where $\CC^n \neq X_1$ and $Y_{\theta-1} \neq \CC^m$ follow 
from the assumption that the common left and right kernels of $A$ are trivial.
For each $\alpha \in [\theta]$, 
let ${\cal L}_A^{\alpha}$ denote the subset consisting of 
$(X,Y) \in {\cal S}_A$ such that $\phi(X,Y)$ belongs to the edge 
between $\phi(X_{\alpha-1},Y_{\alpha-1})$ and $\phi(X_\alpha,Y_\alpha)$.
As in the proof of Lemma~\ref{lem:E_A}, we have:
\begin{Lem}
    If $(X,Y),(X',Y') \in {\cal L}_A^{\alpha}$, 
    then $(X+X', Y \cap Y'), (X \cap X', Y+ Y') \in {\cal L}_A^{\alpha}$.
    In particular, ${\cal L}_A^{\alpha}$ is a modular lattice with respect to
the partial order $(X,Y) \preceq (X',Y') \Leftrightarrow X \supseteq X', Y \subseteq Y'$,  
where the minimum  and maximum elements are given by $(X_{\alpha-1},Y_{\alpha-1})$ and $(X_\alpha,Y_\alpha)$, respectively.
\end{Lem}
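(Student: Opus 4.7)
My plan is to reduce both assertions to the supporting-hyperplane description of the edge combined with the Grassmann (dimension) identity already used in the preceding lemma. Let $\lambda a + \mu b = c$ be the affine equation of the line through $\phi(X_{\alpha-1},Y_{\alpha-1})$ and $\phi(X_\alpha,Y_\alpha)$, chosen with $\lambda,\mu > 0$ so that $\Conv \phi({\cal S}_A)$ lies in the half-plane $\lambda a + \mu b \le c$. For any $(X,Y),(X',Y') \in {\cal L}_A^{\alpha}$, both $\phi(X,Y)$ and $\phi(X',Y')$ saturate this inequality, and the pairs $(X+X', Y\cap Y')$ and $(X\cap X', Y+Y')$ lie in ${\cal S}_A$ directly from the definition. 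Applying identity (\ref{eqn:modular_equality}) shows that their two $\lambda a + \mu b$-values sum to $2c$, while each is at most $c$; this forces both to equal $c$. Because $\phi(X_{\alpha-1},Y_{\alpha-1})$ and $\phi(X_\alpha,Y_\alpha)$ are consecutive extreme points, the intersection of this supporting line with $\Conv \phi({\cal S}_A)$ is exactly the edge segment, so both new $\phi$-values lie on that edge, i.e., $(X+X', Y\cap Y'),(X\cap X', Y+Y') \in {\cal L}_A^{\alpha}$.

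Given closure, I would verify directly from $(X,Y)\preceq (X',Y') \iff X\supseteq X',\, Y\subseteq Y'$ that $(X,Y)\vee (X',Y') = (X\cap X', Y+Y')$ and $(X,Y)\wedge (X',Y') = (X+X', Y\cap Y')$ realize join and meet. Modularity of the resulting lattice then reduces to the standard Grassmann modular law componentwise: if $(X_1,Y_1)\preceq(X_3,Y_3)$, i.e., $X_3\subseteq X_1$ and $Y_1\subseteq Y_3$, then $X_1\cap(X_2+X_3)=(X_1\cap X_2)+X_3$ and $Y_1+(Y_2\cap Y_3)=(Y_1+Y_2)\cap Y_3$, which combine to give $(X_1,Y_1)\vee\bigl((X_2,Y_2)\wedge(X_3,Y_3)\bigr)=\bigl((X_1,Y_1)\vee(X_2,Y_2)\bigr)\wedge(X_3,Y_3)$.

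For the extrema I would apply the closure result once more: for any $(X,Y)\in {\cal L}_A^{\alpha}$ the pair $(X+X_{\alpha-1}, Y\cap Y_{\alpha-1})$ lies in ${\cal L}_A^{\alpha}$, so its first $\phi$-coordinate $\dim(X+X_{\alpha-1})$ is at most $\dim X_{\alpha-1}$ (the largest first coordinate attained on the edge); since the reverse inequality is automatic, this forces $X+X_{\alpha-1}=X_{\alpha-1}$, hence $X\subseteq X_{\alpha-1}$. The analogous bound in the second coordinate yields $Y\supseteq Y_{\alpha-1}$, so $(X_{\alpha-1},Y_{\alpha-1})\preceq(X,Y)$; symmetric reasoning using $(X\cap X_\alpha, Y+Y_\alpha)$ shows $(X,Y)\preceq(X_\alpha,Y_\alpha)$. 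The one delicate point in this plan is justifying that the supporting line meets $\Conv \phi({\cal S}_A)$ exactly in the edge segment, which follows from consecutiveness of the two extreme points in the arrangement (\ref{eqn:(X_alpha,Y_alpha)}); once that is granted, everything else is routine dimension counting coupled with the subspace modular law.
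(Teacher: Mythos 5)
The paper does not supply a proof of this lemma, so there is nothing to compare against; the natural route is exactly the one you take, namely the same supporting-line argument combined with the dimension identity~(\ref{eqn:modular_equality}) that proved the preceding lemma. Your argument is correct in all its parts: $(X+X',Y\cap Y')$ and $(X\cap X',Y+Y')$ lie in ${\cal S}_A$; the affine functional saturated on the edge has positive coefficients (since the edge runs with $\dim X$ decreasing and $\dim Y$ increasing and $\Conv\phi({\cal S}_A)$ contains the origin on the $\leq$ side), and the convexity squeeze via~(\ref{eqn:modular_equality}) forces both $\phi$-values onto the supporting line, hence onto the edge because the two endpoints are consecutive extreme points. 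Your identification of $(X,Y)\vee(X',Y')=(X\cap X',Y+Y')$ and $(X,Y)\wedge(X',Y')=(X+X',Y\cap Y')$ as least upper bound and greatest lower bound in $\preceq$ is correct, and the modular law reduces componentwise to the subspace modular law exactly as you write. The argument for the extremal elements is also sound: applying closure with the endpoint $(X_{\alpha-1},Y_{\alpha-1})$ gives $(X+X_{\alpha-1},Y\cap Y_{\alpha-1})\in{\cal L}_A^{\alpha}$, and since $\dim X_{\alpha-1}$ is the maximum first coordinate (and $\dim Y_{\alpha-1}$ the minimum second coordinate) attained on the edge, injectivity of $\phi$ along the one-dimensional edge pins $X+X_{\alpha-1}=X_{\alpha-1}$ and $Y\cap Y_{\alpha-1}=Y_{\alpha-1}$, i.e.\ $(X_{\alpha-1},Y_{\alpha-1})\preceq(X,Y)$; the symmetric argument at $(X_\alpha,Y_\alpha)$ gives the maximum. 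This is a complete and correct filling of the omitted proof.
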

For each $\alpha \in [\theta]$, consider a maximal chain (flag) of ${\cal L}_A^{\alpha}$:
\begin{eqnarray*}
&& X_{\alpha-1} = X_{\alpha,0} \supset X_{\alpha,1} \supset \cdots \supset X_{\alpha,\theta_\alpha} = X_\alpha, \\ 
&& Y_{\alpha-1} =Y_{\alpha,0} \subset Y_{\alpha,1} \subset \cdots \subset Y_{\alpha,\theta_\alpha} = Y_\alpha,
\end{eqnarray*}
where the length $\theta_\alpha$ of the chain is uniquely determined by the Jordan-Dedekind chain condition. 
The union $\bigcup_{\alpha=1}^\theta \bigcup_{\beta=0}^{\theta_{\alpha}} \{ (X_{\alpha,\beta},Y_{\alpha,\beta})\}$ is a maximal chain of
the whole lattice ${\cal L}_A := \bigcup_{\alpha=1}^\theta {\cal L}_A^{\alpha}$, and is called a {\em DM-flag}.
Its subset ${\cal E}_{A}$ is called the {\em coarse DM-flag}, which is uniquely determined by $A$. 
From a DM-flag, we obtain a simultaneous block upper triangular form of $A$ as follows.
Consider $g \in GL_n$ including, as row vectors, a basis of $X_{\alpha,\beta}$ for each $\alpha, \beta$.
Similarly, consider $h \in GL_m$ including, as row vectors, a basis of $Y_{\alpha,\beta}$ for each $\alpha,\beta$.
Suppose that they are positioned in the last rows for $g$ and first rows for $h$.
Then, the matrices $B_\ell  = g A_\ell h^{\dagger}$ are simultaneously block-triangularized,  
as in the right of Figure~\ref{fig:diagram}.
We call $B = (B_\ell)$ a {\em DM-decomposition}\footnote{The classical DM-decomposition restricts
${\cal S}_{A}$ to coordinate subspaces and 
${\cal L}_{A}$ to the sublattice of the coordinate subspaces $X,Y$ maximizing $\dim X+ \dim Y$, 
where $g,h$ are chosen as permutation matrices.
In this setting, 
a block-triangular form obtained by using the maximal chain of the entire ${\cal L}_A$
was considered by N. Tomizawa (unpublished) in the development of principal partitions in the 1970's; 
see \cite[Section 3]{HayashiHiraiSakabe}.
For this reason, our decomposition may be more precisely called a {\em DMT-decomposition}.
} of $A$.
When  $g$ (resp. $h$) is restricted to span only $X_\alpha$ (resp. $Y_\alpha$), 
it is called a {\em coarse DM-decomposition of $A$}.  

For abuse of notation, $X_\alpha$, $X_{\alpha,\beta}$, $Y_\alpha$, and $Y_{\alpha,\beta}$
also denote 
the index sets of the corresponding rows and columns of $B$.
Define ordered partitions $(I_\alpha)$ of $[n]$, $(J_\alpha)$ of $[m]$ 
and their refinements $(I_{\alpha,\beta})$, $(J_{\alpha,\beta})$ by 
\begin{eqnarray}
    && I_\alpha := X_{\alpha-1}\setminus X_{\alpha}, \quad J_\alpha := Y_{\alpha} \setminus Y_{\alpha-1}\quad (\alpha \in [\theta]), \\
    && I_{\alpha,\beta} := X_{\alpha,\beta-1} \setminus X_{\alpha,\beta},\quad J_{\alpha,\beta} := Y_{\alpha,\beta} \setminus Y_{\alpha,\beta-1}\quad (\beta \in [\theta_\alpha]).
\end{eqnarray}
Let $\hat B = (\hat B_{\ell})$ denote the matrix tuple
of block-diagonal matrices 
obtained from $B_{\ell}$ by replacing each (upper) off-diagonal block 
$B_k[I_{\alpha,\beta}, J_{\alpha',\beta'}]$ $((\alpha,\beta) \neq (\alpha',\beta'))$ with the zero matrix.
We call $\hat B$ a {\em diagonalized DM-decomposition of $A$}.
A diagonalized version of a coarse DM-decomposition is defined analogously.

Let $n_{\alpha} := |I_\alpha|$ and $m_{\alpha} := |J_\alpha|$.
By convexity of $\Conv \phi({\cal S}_A)$, it holds
\begin{equation}\label{eqn:n_1/m_2<...}
\frac{n_{1}}{m_1} < \frac{n_{2}}{m_2} < \cdots < \frac{n_{\theta}}{m_\theta}.
\end{equation}
Define $(p^*,q^*) \in \RR^n \times \RR^m$ by
\begin{equation}\label{eqn:p*q*}
    p^* := - \frac{1}{n}{\bf 1} + \frac{1}{C_A}\sum_{\alpha=1}^{\theta}\frac{m_\alpha}{n_\alpha + m_{\alpha}}{\bf 1}_{I_{\alpha}},\quad 
q^* := - \frac{1}{m}{\bf 1} +  \frac{1}{C_A}\sum_{\alpha=1}^{\theta} \frac{n_\alpha}{n_\alpha + m_{\alpha}}{\bf 1}_{J_{\alpha}}, 
\end{equation}
where the constant $C_A$ is defined by
\begin{equation}\label{eqn:C_A}
C_A := \sum_{\alpha=1}^{\theta}  \frac{n_{\alpha} m_\alpha }{n_\alpha + m_{\alpha}} 
\quad \leq \frac{nm}{n+m},
\end{equation}
where the inequality is seen from concavity of the harmonic mean $(x,y) \mapsto 2 (1/x+1/y)^{-1}$.
We see 
from (\ref{eqn:n_1/m_2<...})--(\ref{eqn:C_A}) 
that $(p^*,q^*)$ belongs to the positive Weyl chamber:
\begin{equation}\label{eqn:property_p*q*}
p_1^* \geq p_2^* \geq \cdots \geq p_n^*, \quad q_1^* \leq q_2^* \leq \cdots \leq q_m^*, \quad {\bf 1}^\top {p^*}  = {\bf 1}^\top {q^*} = 0.
\end{equation}
Recalling $P_n^1 := P_n \cap SL_n$, 
define $(G^*,H^*) \in \mathfrak{p}_n^1 \times  \mathfrak{p}_m^1 = T_{I,I} (P_n^1 \times P_m^1)$ by 
\begin{equation}
G^* := (\sigma^*)^{\dagger} \diag (- p^*) \sigma^* ,\quad H^* :=  (\tau^*)^{\dagger} \diag (-q^*) \tau^*, 
\end{equation}
where $\sigma^*$ is a unitary matrix having a basis of $X_{\alpha}$ in the last $n_{\alpha}$ rows and
$\tau^*$ is a unitary matrix having 
a basis of $Y_{\alpha}$ in the first $m_{\alpha}$ rows.
By using these notions, we give a solution to Problem~\ref{prob:ABC} (A), (B):
\begin{Thm}\label{thm:mnp_Q_A}
\begin{itemize}
\item[(1)] 
	$(p^*,q^*)$ is the minimum-norm point of $\varDelta_A$, and
\item[(2)] $(G^*,H^*)/\|(G^*,H^*)\|$ is the unique minimizer of $f^{\infty}_A$ over $S_{I,I}(P_n^1 \times P_m^1)$, 
where it holds 
\end{itemize}
\begin{equation}\label{eqn:equality}
\|(p^*,q^*)\|^2 = - f^{\infty}_A (G^*,H^*) = \frac{1}{C_A} - \frac{1}{n}- \frac{1}{m}. 
\end{equation}
\end{Thm}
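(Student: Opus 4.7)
The plan is to establish both claims simultaneously by matching a primal and a dual point in the chain
\[
\inf_{g,h}\|\tilde\mu(gAh^\dagger)\| \;=\; \inf_{x,y}\|\nabla F_A(x,y)\| \;=\; \sup_{\xi \in S_{I,I}} -F_A^\infty(\xi) \;=\; \inf_{p \in \varDelta_A}\|p\|
\]
provided by Corollary~\ref{cor:duality} together with~(\ref{eqn:|grad|=|mu|=|p|}). I will show that $(H^*,G^*)$ attains the supremum with value $\|(p^*,q^*)\|$ and that $(p^*,q^*) \in \varDelta_A$; claim (1) then follows from uniqueness of the minimum-norm point of a closed convex set, and claim (2) from Proposition~\ref{prop:f^infty}(2). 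The explicit constant in~(\ref{eqn:equality}) will drop out of the dual computation.

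For the dual side, I evaluate $F_A^\infty(H^*,G^*)$ via~(\ref{eqn:F_A^infty}). The unitaries $\sigma^*,\tau^*$ are chosen so that $\sigma^* A_\ell (\tau^*)^\dagger$ is block-upper-triangular for the coarse DM-decomposition: the block $(I_\alpha,J_\beta)$ vanishes whenever $\beta<\alpha$ because $(X_\alpha,Y_\alpha)\in{\cal S}_A$, and each diagonal block $(I_\alpha,J_\alpha)$ must carry a nonzero entry (else $(X_{\alpha-1},Y_\alpha)\in{\cal S}_A$ would place a point of $\phi({\cal S}_A)$ strictly beyond the edge $[\phi(X_{\alpha-1},Y_{\alpha-1}),\phi(X_\alpha,Y_\alpha)]$ of $\Conv\phi({\cal S}_A)$, contradicting extremality). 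Writing $H^*=(\sigma^*)^\dagger \diag \tilde p\, \sigma^*$ and $G^*=(\tau^*)^\dagger \diag \tilde q\, \tau^*$ with $(\tilde p,\tilde q):=-(p^*,q^*)/\|(p^*,q^*)\|$, the sign pattern~(\ref{eqn:=<>}) shows that the maximum of $\tilde p_i+\tilde q_j$ over nonzero positions is attained precisely on the diagonal blocks and equals $(1/n+1/m-1/C_A)/\|(p^*,q^*)\|$. A direct expansion of~(\ref{eqn:p*q*}), using $\sum_\alpha n_\alpha=n$, $\sum_\alpha m_\alpha=m$ and $\sum_\alpha n_\alpha m_\alpha/(n_\alpha+m_\alpha)=C_A$, confirms $\|(p^*,q^*)\|^2=1/C_A-1/n-1/m$, whence $F_A^\infty(H^*,G^*)=-\|(p^*,q^*)\|$ and~(\ref{eqn:equality}).

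For the primal side, I realize $(p^*,q^*)$ as a moment-map spectre of a limit point in $\overline{SL_n\cdot A\cdot SL_m}$ in two stages. First, composing a block-diagonal scaling $g_1\oplus\cdots\oplus g_\theta\in SL_n$, $h_1\oplus\cdots\oplus h_\theta\in SL_m$ (with $g_\alpha\in SL_{n_\alpha}$, $h_\alpha\in SL_{m_\alpha}$) with the one-parameter subgroup $(e^{-t\diag p^*}\sigma^*,\, e^{-t\diag q^*}\tau^*)\in SL_n\times SL_m$ and exploiting that $p^*,q^*$ are constant on each $I_\alpha,J_\alpha$, the strict inequalities in~(\ref{eqn:=<>}) send every off-diagonal block to zero (after the uniform rescaling by $e^{t\|(p^*,q^*)\|^2}$) while each diagonal block tends to $g_\alpha\,\hat B_\ell[I_\alpha,J_\alpha]\,h_\alpha^\dagger$. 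Second, I claim each diagonal block $(\hat B_\ell[I_\alpha,J_\alpha])_\ell$ is exactly $({\bf 1}/n_\alpha,{\bf 1}/m_\alpha)$-scalable via Theorem~\ref{thm:Gurvits2}: any nontrivial $(X',Y')\in{\cal S}_{\hat B[I_\alpha,J_\alpha]}$ with $\dim X'/n_\alpha+\dim Y'/m_\alpha\geq 1$ lifts to an element of ${\cal S}_A$ whose $\phi$-image strictly exceeds the edge of $\Conv\phi({\cal S}_A)$, contradicting coarse-DM extremality. Choosing $g_\alpha,h_\alpha$ to bring each block to its uniform moment target then yields a limit tuple whose spectre is exactly $(p^*,q^*)$, so $(p^*,q^*)\in\varDelta_A$. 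Combining with weak duality (Lemma~\ref{lem:weakduality}) forces equality throughout the displayed chain and completes the proof. The main obstacle I anticipate is the block-scalability argument in the second stage: verifying that every nontrivial subspace pair inside a single diagonal block lifts to a strictly sub-extremal point of $\Conv\phi({\cal S}_A)$ requires a careful lifting through the fine lattice ${\cal L}_A^\alpha$ compatibly with the coarse flag $(X_\alpha,Y_\alpha)$.
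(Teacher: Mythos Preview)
Your overall strategy matches the paper's: compute $F_A^\infty(H^*,G^*)$ and $\|(p^*,q^*)\|$ explicitly, then show $(p^*,q^*)\in\varDelta_A$ so that weak duality becomes equality. The dual computation and the one-parameter subgroup argument for killing off-diagonal blocks are essentially the same as in the paper (Proposition~\ref{prop:hatB_in_Q_A}).

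The gap is in your exact-scalability claim for the \emph{coarse} diagonal blocks. You invoke Theorem~\ref{thm:Gurvits2}, which requires the \emph{strict} inequality $\dim X'/n_\alpha+\dim Y'/m_\alpha<1$ for every nontrivial $(X',Y')\in{\cal S}_{\hat B[I_\alpha,J_\alpha]}$. Your lifting argument only rules out the strict violation $>1$: when $\dim X'/n_\alpha+\dim Y'/m_\alpha=1$, the lifted pair $(X_\alpha+X',\,Y_{\alpha-1}+Y')$ lands \emph{on} the edge of $\Conv\phi({\cal S}_A)$ between $\phi(X_{\alpha-1},Y_{\alpha-1})$ and $\phi(X_\alpha,Y_\alpha)$, not beyond it---so there is no contradiction with extremality. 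Such pairs exist precisely when the fine lattice ${\cal L}_A^\alpha$ has length $\theta_\alpha>1$, and in that case Theorem~\ref{thm:Gurvits2} does not apply to the coarse block.

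The paper repairs this exactly along the line you anticipate in your last sentence: it passes to the \emph{fine} DM-flag and proves (Claim inside Proposition~\ref{prop:hatB_in_Q_A}) that each fine block $B[I_{\alpha,\beta},J_{\alpha,\beta}]$ is exactly $({\bf 1}/|I_{\alpha,\beta}|,{\bf 1}/|J_{\alpha,\beta}|)$-scalable. There the lifting argument does work, because a nontrivial pair inside a fine block lifts to a point strictly between two consecutive elements of a \emph{maximal} chain in ${\cal L}_A^\alpha$, contradicting maximality. Since all fine blocks within a given coarse block share the aspect ratio $n_\alpha{:}m_\alpha$, assembling their exact scalings yields the desired $(p^*+{\bf 1}/n,q^*+{\bf 1}/m)$-scaling of $\hat B$. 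A cheaper fix, if you only want membership $(p^*,q^*)\in\varDelta_A$, is to observe that your lifting argument already gives the \emph{weak} inequality $\dim X'/n_\alpha+\dim Y'/m_\alpha\le 1$, so each coarse block is \emph{approximately} scalable by Theorem~\ref{thm:Gurvits1}; since $\varDelta_A$ is closed, this suffices.
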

\begin{Cor}\label{cor:mnp_Q_A}
Let $(g_k,h_k)$ and $(x_k,y_k)$ be the sequences in (\ref{eqn:DGR_F_A_G}) and (\ref{eqn:DGR_F_A}), respectively. 
\begin{itemize}
\item[(1)] $\spec \mu(g_k A h_k^{\dagger})$ converges to $(p^*,q^*)$ for $k \to \infty$.
\item[(2)] $(x_k,y_k)$ converges, in cone topology, to  $(G^*,H^*)/\|(G^*,H^*)\|$. 
More precisely, the sequence $(G_k,H_k)$ defined by $(x_k,y_k) = (e^{t G_k/L}, e^{t H_k/L})$
converges to $(G^*,H^*)$ for $k \to \infty$.
\end{itemize}
\end{Cor}

\begin{proof}[Proof of Theorem~\ref{thm:mnp_Q_A}]
We first show (\ref{eqn:equality}).
From the definitions of $(p^*,q^*)$ and $C_A$, we have
\[
\|(p^*,q^*)+({\bf 1}/n,{\bf 1}/m)\|^2 = \frac{1}{C^2_A}\sum_{\alpha=1}^\theta 
\frac{n_\alpha m_\alpha^2}{(n_\alpha+ m_\alpha)^2} +  \frac{m_\alpha n_\alpha^2}{(n_\alpha+ m_\alpha)^2} 
		= \frac{1}{C^2_A}\sum_{\alpha=1}^\theta 
		\frac{n_\alpha m_\alpha}{n_\alpha+ m_\alpha} = \frac{1}{C_A}.
\]
By the last equation in (\ref{eqn:property_p*q*}), we have
$$
\|(p^*,q^*)\|^2 =  \|(p^*,q^*)+ ({\bf1 }/n, {\bf 1}/m)\|^2 - \|({\bf1 }/n, {\bf 1}/m)\|^2 = 1/C_A - 1/n-1/m\ (> 0).
$$
On the other hand, 
$B = \sigma^* A(\tau^*)^{\dagger}$ is a coarse DM-decomposition, that is, 
$(\sigma^* A_{\ell} (\tau^*)^{\dagger})_{ij} = 0$ for each $(i,j) \in I_{\alpha} \times J_{\alpha'}$ with $\alpha > \alpha'$.
By (\ref{eqn:F_A^infty}) in the proof of Theorem~\ref{thm:scalability_limit}, 
the value of the recession function $f_{A}^{\infty}(G^*,H^*)$ is given by
\begin{equation}\label{eqn:f_A^infty(G*,H*)}
f_{A}^{\infty}(G^*,H^*) = \max \{- p^*_i- q_j^* \mid \exists \ell, (i,j) \in I_{\alpha} \times J_{\alpha'}:\alpha \leq \alpha',  (\sigma^* A_{\ell} (\tau^*)^{\dagger})_{ij} \neq 0 \}.
\end{equation}
Observe from (\ref{eqn:n_1/m_2<...})--(\ref{eqn:C_A}) that
\begin{equation}\label{eqn:=<>}
- p_i^* - q^*_j 
\left\{ 
\begin{array}{ll}
= 1/n + 1/m - 1/{C_A} & {\rm if}\  (i,j) \in I_{\alpha} \times J_{\alpha}, \\
< 1/n + 1/m - 1/{C_A}   &  {\rm if}\ (i,j)\in I_{\alpha} \times J_{\alpha'}: \alpha < \alpha',\\
> 1/n + 1/m - 1/{C_A}  &  {\rm if}\  (i,j) \in I_{\alpha} \times J_{\alpha'}: \alpha > \alpha'. 
\end{array} \right.
\end{equation}
Hence, the maximum in (\ref{eqn:f_A^infty(G*,H*)}) is attained by the index of any nonzero element of 
any diagonal block of $\sigma^* A_{\ell} (\tau^*)^{\dagger}$, which implies  
$f^{\infty}_A(G^*,H^*) = 1/n + 1/m - 1/C_A$, and (\ref{eqn:equality}).

       To complete the proof, it suffices to show $(p^*,q^*) \in \varDelta_A$ 
       since  $(p^*,q^*)$ and $(G^*,H^*)/\|(G^*,H^*)\|$ would attain
       $\inf_{(p,q)\in \varDelta_{A}} \|(p,q)\| = \sup_{(G,H) \in B_
       {I,I}} - f^{\infty}_A(G,H)$.
       This is done in the next proposition.
\end{proof}

\begin{Prop}\label{prop:hatB_in_Q_A}
Let $\hat B$ be a diagonalized DM-decomposition of $A$.
\begin{itemize}
\item[(1)] $\hat B$ is exactly $(p^*+{\bf 1}/n, q^*+{\bf 1}/m)$-scalable.  
\item[(2)] $[\hat B] \in \overline{[SL_n \cdot A \cdot SL_m]}$.
\end{itemize}
In particular, it holds $(p^*,q^*) \in \varDelta_A$.
\end{Prop}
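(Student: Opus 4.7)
\smallskip
\noindent\textit{Proof proposal.} The plan is to establish (1) and (2) separately, and then derive $(p^*,q^*) \in \varDelta_A$ as a consequence. The key observation is that $\hat B$ splits as the direct sum of the elementary diagonal blocks $\hat B_{\alpha,\beta} := (B_\ell[I_{\alpha,\beta},J_{\alpha,\beta}])_{\ell=1}^N$, and that each such block has aspect ratio $|I_{\alpha,\beta}|:|J_{\alpha,\beta}| = n_\alpha : m_\alpha$, since consecutive $\phi$-images in ${\cal L}^\alpha_A$ lie on an edge of slope $-n_\alpha/m_\alpha$. I will first show each $\hat B_{\alpha,\beta}$ is exactly $({\bf 1}/|I_{\alpha,\beta}|, {\bf 1}/|J_{\alpha,\beta}|)$-scalable via Theorem~\ref{thm:Gurvits2}, combine these scalings with suitable scalar multipliers to obtain (1), and then exhibit a diagonal $1$-PSG under which $B$ limits to $\hat B$ in $\mathbb{P}(V)$ to obtain (2). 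Composing these two constructions produces a sequence in $SL_n \cdot A \cdot SL_m$ whose moment-map spectre converges to $(p^*,q^*)$.

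The heart of the argument is block-wise exact scalability. Given a non-trivial $(X',Y') \in {\cal S}_{\hat B_{\alpha,\beta}}$, I would lift it to
\[
(\tilde X,\tilde Y) := (X_{\alpha,\beta} \oplus X',\ Y_{\alpha,\beta-1} \oplus Y') \in {\cal S}_B,
\]
viewing $X' \subseteq \CC^{I_{\alpha,\beta}}$ and $Y' \subseteq \CC^{J_{\alpha,\beta}}$ naturally as subspaces of $\CC^n,\CC^m$. That $(\tilde X,\tilde Y)$ lies in ${\cal S}_B$ will be verified by decomposing $u^{\top} B_\ell \bar v$ into four cross terms: two vanish by $(X_{\alpha,\beta},Y_{\alpha,\beta}) \in {\cal S}_B$, one by the block-upper-triangularity of $B$ in the fine block structure (coming from $(X_{\alpha,\beta-1},Y_{\alpha,\beta-1}) \in {\cal S}_B$), and the last one by $(X',Y') \in {\cal S}_{\hat B_{\alpha,\beta}}$. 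If $\dim X'/|I_{\alpha,\beta}| + \dim Y'/|J_{\alpha,\beta}| = 1$, a direct computation shows that $\phi(\tilde X,\tilde Y)$ lies on the open edge between $\phi(X_{\alpha,\beta-1},Y_{\alpha,\beta-1})$ and $\phi(X_{\alpha,\beta},Y_{\alpha,\beta})$, and that $(\tilde X,\tilde Y)$ is strictly intermediate in the partial order $\preceq$. This contradicts the saturation of the DM-chain $\{(X_{\alpha,\beta'},Y_{\alpha,\beta'})\}_{\beta'}$ as a maximal chain of ${\cal L}^\alpha_A$, yielding the strict inequality required by Theorem~\ref{thm:Gurvits2}.

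To assemble (1), I would scale each block separately by Theorem~\ref{thm:Gurvits2}; the aspect-ratio identity $|I_{\alpha,\beta}|:|J_{\alpha,\beta}| = n_\alpha : m_\alpha$, combined with the relation $c_\alpha n_\alpha = c'_\alpha m_\alpha$ for $c_\alpha := m_\alpha/(C_A(n_\alpha+m_\alpha))$ and $c'_\alpha := n_\alpha/(C_A(n_\alpha+m_\alpha))$, allows a scalar rescaling of each block-scaling to produce row-marginals $c_\alpha {\bf 1}_{I_{\alpha,\beta}}$ and column-marginals $c'_\alpha {\bf 1}_{J_{\alpha,\beta}}$; the block-diagonal direct sum of these yields the claimed $(p^* + {\bf 1}/n,\, q^* + {\bf 1}/m)$-scaling of $\hat B$. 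For (2), I would use the diagonal $1$-PSG $g(t) = \diag(t^{k(i)})_i$, $h(t) = \diag(t^{-k(j)})_j$, where $k(i)$ denotes the lex position of the fine block containing the index $i$; the $(k,k')$ fine block of $B$ is then scaled by $t^{k-k'}$, so diagonal blocks persist, strictly upper blocks vanish as $t \to \infty$, and strictly lower blocks are already zero by the block-upper-triangularity of $B$. Since each diagonal block of $B$ is nonzero (otherwise an extra extreme point would appear in $\phi({\cal S}_A)$, contradicting the enumeration of ${\cal E}_A$), $\hat B \neq 0$ and hence $[g(t) B h(t)^{\dagger}] \to [\hat B]$ in $\mathbb{P}(V)$; as $[B] \in [SL_n \cdot A \cdot SL_m]$ and $GL$- and $SL$-orbits coincide projectively, (2) follows. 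Finally, composing the $1$-PSG with the exact scaling produced in (1) gives a sequence in $SL_n \cdot A \cdot SL_m$ whose moment-map spectre tends to $(p^*,q^*)$, proving the last assertion.

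The main obstacle I anticipate is the lifting step: one must carefully verify that the direct-sum decomposition $\tilde X = X_{\alpha,\beta} \oplus X'$ respects the ambient embedding, and, more delicately, that $(\tilde X,\tilde Y)$ is \emph{strictly} intermediate in $\preceq$ in both the $X$- and $Y$-components (so that the saturation contradiction actually bites). This requires tracking the trivial exclusions $X' \neq \{0\}, \CC^{I_{\alpha,\beta}}$ and $Y' \neq \{0\}, \CC^{J_{\alpha,\beta}}$ through the dimension identity characterizing the open edge. Once the lifting is in place, the remaining steps---combining block scalings and constructing the $1$-PSG---are routine from the block-triangular structure of $B$.
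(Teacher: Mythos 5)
Your proposal is correct and follows essentially the same strategy as the paper: prove blockwise exact scalability via the same lift $(\tilde X, \tilde Y) = (X_{\alpha,\beta}+X',\, Y_{\alpha,\beta-1}+Y') \in \mathcal{S}_A$ and the maximal-chain contradiction (the paper's proof of its Claim), assemble the block scalings with the per-block scalars into a $(p^*+{\bf 1}/n,\,q^*+{\bf 1}/m)$-scaling, and drive $B\to\hat B$ projectively by a diagonal $1$-PSG that kills the strictly upper blocks (the paper's $a_t,b_t$). The one step you treat only in the borderline case is the Gurvits violation with $\dim X'/|I_{\alpha,\beta}|+\dim Y'/|J_{\alpha,\beta}|>1$, which sends $\phi(\tilde X,\tilde Y)$ outside $\Conv\phi(\mathcal{S}_A)$ rather than onto an open edge, but as the paper notes this case is ``obviously impossible'' and your argument covers the harder equality case, so the omission is cosmetic.
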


\begin{proof}
(1). 
We first show:
\begin{Clm}
    $B[I_{\alpha,\beta},J_{\alpha,\beta}]$ 
	is exactly $({\bf 1}/|I_{\alpha,\beta}|,  {\bf 1}/|J_{\alpha,\beta}|)$-scalable.
\end{Clm}
\begin{proof}[Proof of Claim]
We can assume that $A$ is already 
equal to a DM-decomposition $B$, 
where all $X_{\alpha,\beta}$, $Y_{\alpha,\beta}$ are coordinate subspaces.
Suppose indirectly that $B[I_{\alpha,\beta},J_{\alpha,\beta}]$ is not exactly 
$({\bf 1}/|I_{\alpha,\beta}|,  {\bf 1}/|J_{\alpha,\beta}|)$-scalable.
Then, by Theorem~\ref{thm:Gurvits2}, 
there is nontrivial $(Z,W) \in {\cal S}_{B[I_{\alpha,\beta},J_{\alpha,\beta}]}$ 
such that $(1/|I_{\alpha,\beta}|)\dim Z+ (1/|J_{\alpha,\beta}|)\dim W \geq 1$. 
Then $(X_{\alpha,\beta} + Z, Y_{\alpha,\beta-1} + W)$ belongs to ${\cal S}_{A}$.
However, $\phi(X_{\alpha,\beta} + Z, Y_{\alpha,\beta-1} + W)$ goes beyond 
$\Conv \phi({\cal S}_A)$ or
lies on the interior of the segment between 
$\phi(X_{\alpha,\beta-1},Y_{\alpha,\beta-1})$ and $\phi(X_{\alpha,\beta},Y_{\alpha,\beta})$.  
The former case is obviously impossible.
The latter case is also impossible due to the maximality 
of the chain $\{(X_{\alpha,\beta},Y_{\alpha,\beta})\}$ in ${\cal L}_A$.
\end{proof}
We observe from  $n_{\alpha}/m_{\alpha} = |I_{\alpha,\beta}|/|J_{\alpha,\beta}|$ that $(m_{\alpha},  n_{\alpha})$ 
is a constant multiple of $(1/|I_{\alpha,\beta}|,  1/|J_{\alpha,\beta}|)$.
By the claim,  
for each $\alpha,\beta$, we can choose
scaling matrices $g_{\alpha,\beta},h_{\alpha,\beta}$  
to make $B[I_{\alpha,\beta},J_{\alpha,\beta}]$
an exact $(1/\{C_{A}(n_{\alpha}+ m_{\alpha})\})(m_{\alpha}{\bf 1},  n_{\alpha}{\bf 1})$-scaling. 
Then, for $g := \bigoplus_{\alpha,\beta} g_{\alpha,\beta}$, 
$h := \bigoplus_{\alpha,\beta} h_{\alpha,\beta}$,   
the scaling $g\hat{B}h^{\dagger}$ is a desired $(p^*+{\bf 1}/n,q^*+{\bf 1}/m)$-scaling.  

(2). Let $B$ 
be a DM-decomposition of $A$, where $B \in SL_n \cdot A \cdot SL_m$. 
For each $\alpha,\beta$ and $t > 0$, by $B[X_{\alpha,\beta}, Y_{\alpha,\beta}] = O$, it holds
\begin{equation}\label{eqn:Bije^-t}
(e^{t \diag {\bf 1}_{X_{\alpha, \beta}}} B e^{t \diag {\bf 1}_{Y_{\alpha, \beta}} - {\bf 1}})_{ij}
= \left\{ 
\begin{array}{ll}
B_{ij}e^{-t} & {\rm if}\ i \not \in X_{\alpha, \beta}, j \not \in Y_{\alpha, \beta}, \\
B_{ij}  & {\rm otherwise}.
\end{array}
\right.
\end{equation}
Let $R := \sum_{\alpha,\beta} |X_{\alpha,\beta}|/n$ and $S := \sum_{\alpha,\beta} (|Y_{\alpha,\beta}|-m)/m$. For $t> 0$, define $a_t \in SL_n$ and $b_t \in SL_m$ by 
	\begin{equation*}
	a_t := e^{- tR} e^{t \diag \sum_{\alpha,\beta} {\bf 1}_{X_{\alpha,\beta}}},\quad 
    b_t := e^{-tS }e^{t \diag \sum_{\alpha,\beta} {\bf 1}_{Y_{\alpha,\beta}}-{\bf 1}}.
	\end{equation*}
	By (\ref{eqn:Bije^-t}), the scaling $a_t B b_t$ is written as
	\[
a_t B b_t = e^{-(R+S)t} (\hat B + E_t)
	\]
    for the diagonalized DM-decomposition $\hat B$ of $B$ and matrix $E_t$ converging to zero for $t \to \infty$.
 This implies that $\lim_{t \to \infty}[a_t B b_t] = \lim_{t \to \infty} [\hat B+ E_t] = [\hat B] \in  \overline{[SL_n \cdot A \cdot  SL_m]}$.
 Since $\hat B$ admits an exact $(p^*+{\bf 1}/n, q^*+{\bf 1}/m)$-scaling $B^* = g\hat Bh^{\dagger}$.
 By Lemma~\ref{lem:(p,q)-scalable} and ${\bf 1}^{\top} p^* = {\bf 1}^{\top}q^* = 0$, 
 we conclude that $(p^{*},q^*) \in \varDelta_A$.
\end{proof}
Now the sequence of the scaled matrices along the gradient-descent trajectory accumulates to
the $SU_n \times SU_m$-orbit of a diagonalized DM-decomposition $\hat B$, providing a (partial) solution of Problem~\ref{prob:ABC}~(C): 
\begin{Thm}\label{thm:limit_in_P(V)}
Let $\hat B$ be a diagonalized DM-decomposition of $A$, and let 
$B^*$ be a $(p^*+{\bf 1}/n,q^*+{\bf 1}/m)$-scaling of $\hat B$.
Then $[g_kAh_k^{\dagger}]$ accumulates to points in $[SU_n \cdot B^* \cdot SU_m]$ for $k \to \infty$.
\end{Thm}
 
\begin{proof}
It holds
$
\mu(B^*)= (
\diag p^*, \diag q^*)
$.
Thus, $B^*$ attains the infimum of 
$\|\mu(B)\|$ over $[B] \in \overline{[SL_n \cdot A \cdot SL_m]}$, which is also the limit of $\| \mu(g_kAh_k^{\dagger})\|$.
By the second Ness uniqueness theorem 
(Theorem~\ref{thm:Ness_second}), we have the claim.
\end{proof}
For the gradient flow $(g(t),h(t))$ of the Kempf-Ness function $F_A$ on the group $SL_n \times SL_m$,
due to the convergence theorem (Theorem~\ref{thm:limit}), 
$[g(t)Ah(t)^{\dagger}]$ converges to a point $\sigma B^* \tau^{\dagger}$ for some $\sigma \in SU_n$, $s\tau \in SU_m$.

Although $B^*$ is also a diagonalized DM-decomposition of $A$, 
it is not clear how to remove the unitary indeterminacy from $[g_kA h_k^{\dagger}]$
and to extract the DM-structure of $B^*$.
This is possible for the coarse DM-structure as follows:
\begin{Thm}\label{thm:convergence_to_cDM}
Let $(G_k,H_k)$ be the sequence defined by $(x_k,y_k) = (e^{kG_k/L},e^{kH_k/L})$. 
Suppose that $G_k = \sigma_k^{\dagger} \diag a^k \sigma_k$ 
	and $H_k = \tau_k^{\dagger} \diag b^k  \tau_k$ 
    for unitary matrices $\sigma_k$, $\tau_k$ and nondecreasing and nonincreasing 
    vectors $a^k$ and $b^k$, respectively.
 Then $\sigma_k A \tau_k^{\dagger}$ accumulates to coarse DM-decompositions.
    The convergence is linear in the following sense: 
    There are $c > 0$, $M > 0$ such that for all $k \geq M$, $\ell \in [N]$
    it holds
    \[
    |(\sigma_k A_{\ell} \tau_k^\dagger)_{ij}| \leq e^{-ck} \quad 
    ((i,j) \in I_{\alpha} \times J_{\alpha'}: \alpha > \alpha').
    \] 
\end{Thm}


\begin{proof}

By Theorems~\ref{thm:discrete_main} and \ref{thm:mnp_Q_A} and
Lemma~\ref{lem:f(x_i+1)-f(x_i)} (1), it holds
\begin{eqnarray}
&& -\frac{1}{L}\left(\frac{1}{C_A} - \frac{1}{n}-\frac{1}{m}\right) = \lim_{k \to \infty} - \frac{\|\nabla f_A(x_k,y_k)\|^2}{L} \nonumber \\ 
&& = \lim_{k \to \infty} f_A(x_{k+1},y_{k+1})- f_A(x_k,y_k) = \lim_{k \to \infty} \frac{f_A(x_k,y_k)}{k}, \label{eqn:F_A/k}
\end{eqnarray}
where the final equality follows from (\ref{eqn:exercise1}) for $a_k := f_A(x_{k+1},y_{k+1})- f_A(x_k,y_k)$. 

Since
	$
	e^{f_A(x_k,y_k)} = \trace \sum_\ell x_k A_\ell y_k A_\ell^{\dagger} 
	= \sum_{\ell, i,j}  |(\sigma_k A_\ell \tau_k^\dagger)_{ij}|^2 e^{(a^k_i + b^k_j)k/L},
	$
we have 
 \[
 \sum_{\ell, i,j} |(\sigma_k A_\ell \tau_k^\dagger)_{ij}|^2 e^{(a^k_i + b^k_j)k/L - f_A(x_k,y_k)} = 1.
 \]
Suppose that the index $(i,j)$ is in a lower triangular block. 
By $(a^k,b^k) \underset{k \to \infty}{\rightarrow} - (p^*,q^*)$ (Corollary~\ref{cor:mnp_Q_A}~(2)) and (\ref{eqn:F_A/k}),  it holds 
\[
\frac{(a^k_i + b^k_j)k/L - f_A(x_k,y_k)}{k}\quad \underset{k \to \infty}{\longrightarrow} \quad \frac{1}{L}\left(- p^*_i - q^*_j - \frac{1}{n}-\frac{1}{m} + \frac{1}{C_A} \right) > 0,
\]
where the inequality follows from (\ref{eqn:=<>}).
Therefore, for some $c' > 0$ and $M' > 0$, 
it holds $(a^k_i  + b^k_j)k/L- f_A(x_k,y_k) \geq c'k$
for all $k > M'$.
Then $|(\sigma_k A_\ell \tau_k^\dagger)_{ij}|^2e^{c'k} \leq 1$ for all $k \geq M'$.
\end{proof}
\begin{Rem}\label{rem:mu(x_k^{1/2}Ay_k^{1/2})}
    Suppose that $\mu(x_k^{1/2}Ay_k^{1/2})$ converges, 
    or more strongly, the convergence of Question~\ref{que:convergence_D} is true. Then it holds
    $
    \lim_{k \to \infty} \|\mu(x_k^{1/2}Ay_k^{1/2}) + (G_k,H_k)\| = 0.
    $
    This implies 
    \begin{equation}\label{eqn:A^(k)}
    \lim_{k \to \infty} \|\mu(e^{\diag a^k/2} \sigma_k A \tau_k^{\dagger} e^{\diag b^k/2}) + (\diag a^k,\diag b^k)\| = 0.
    \end{equation}
    Since $(a^k,b^k) \to -(p^*,q^*)$, the scaling sequence
    $A^{(k)} := (e^{\diag a^k/2} \sigma_k A \tau_k^{\dagger} e^{\diag b^k/2})/\|g_k Ah_k\|$
    accumulates to $(p^* + {\bf 1}/n, q^* + {\bf 1}/m)$-scalings.
    From the coarse DM-structure of $\sigma_k A \tau_k^{\dagger}$ in the limit, one can see  
    that $A^{(k)}$ accumulates to diagonalized coarse DM-decompositions.
    Although our numerical experiment supports such convergence, 
    our results imply only $\liminf_{k \to \infty} = 0$ in~(\ref{eqn:A^(k)}).  
\end{Rem}
We end this subsection with some implications of these results.
%
\paragraph{On finding a destabilizing 1-PSG.}
Suppose that $A$ is not $({\bf 1}/n,{\bf 1}/m)$-scalable.
Consider $(X^*,Y^*) \in {\cal E}_A$ 
mapped to the extreme point $(r^*,s^*)$ of $ 
\Conv \phi({\cal S}_A)$ with the property that 
it maximizes $r$ among all extreme points $(r,s)$ maximizing $r+s$.
The subspace pair $(X^*,Y^*)$ violates (iii) in Theorem~\ref{thm:Gurvits1}
and is a special certificate of unscalability, called {\em dominant} in~\cite{FranksSomaGoemans_SODA2023}.
By Theorem~\ref{thm:convergence_to_cDM}, 
after a large number $k$ of iterations,  
the last $r^*$ rows of $\sigma_k$ and the first $s^*$ rows of 
$\tau_k$ become bases of 
an {\em $\epsilon$-approximate} dominant pair $(X^*_{\epsilon},Y^*_{\epsilon})$ 
in the sense that $|u^{\top}A_\ell \bar v| \leq \epsilon$
for all $\ell$ and all unit vectors $u \in X^*_{\epsilon}, v\in Y^*_{\epsilon}$.
Franks, Soma, and Goemans~\cite{FranksSomaGoemans_SODA2023} 
devised a procedure  
to round such an $e^{- p(n,m,N,b)}$-approximate dominant pair  
into the exact dominant pair $(X^*,Y^*)$, 
where $p$ is a polynomial and
$b$ is the bit complexity of $A$.  
Hence, if we would establish {\em global} linear convergence in Theorem~\ref{thm:convergence_to_cDM}, 
a polynomial number of iterations of 
gradient descent~(\ref{eqn:DGR_F_A}) 
would suffice to recover 
the dominant pair and a destabilizing 1-PSG.

\paragraph{Matrix scaling case.}
An $n \times m$ matrix $M = (a_{ij})$ is viewed  
as a matrix tuple $A = ( a_{ij} e_ie_{j}^{\top})_{ij: a_{ij} \neq 0}$.
Consider the left-right action on $A$, in which the group is restricted to 
the subgroup $ST_n \times ST_m \subseteq SL_n \times SL_m$ 
consisting of diagonal matrices. 
The corresponding scaling problem is nothing but 
the matrix scaling problem of the nonnegative matrix $(|a_{ij}|^2)$; see Section~\ref{subsec:Euclidean}.
The above results are also applicable to this setting. 
Indeed, the gradient $\nabla f_A$ is a pair of diagonal matrices.
Then, the gradient flow/descent belongs to the diagonal subspace in $P_n^1 \times P_m^1$, 
and is viewed as
the gradient flow/descent for the geometric programming objective~(\ref{eqn:matrixscaling}) in matrix scaling.
Here,  
all subspaces $X_{\alpha},Y_{\alpha}, X_{\alpha,\beta},Y_{\alpha,\beta}$ 
are coordinate subspaces. Hence a DM-decomposition $B$ is obtained by row and column permutations, and 
is equivalent to the original (extended) 
DM-decomposition of $M$.
In Remark~\ref{rem:mu(x_k^{1/2}Ay_k^{1/2})}, 
the unitary matrices $\sigma_k$ and $\tau_k$ are permutation matrices, 
and all lower triangular blocks of $A^{(k)}$ become zero matrices after finitely many iterations.
Also, all upper triangular blocks of $A^{(k)}$ converge to zero matrices.
In particular, the expected convergence to the diagonalized DM-decomposition $\hat B$ is true.
This convergence property is almost the same as the one for the Sinkhorn algorithm.
Indeed, \cite{HayashiHiraiSakabe} showed that 
ths limit ({\em Sinkhorn limit}) oscillates between 
the $({\bf 1},\sum_{\alpha} (n_\alpha/m_\alpha){\bf 1}_{J_{\alpha}})$-scaling $B^*_{\rm r}$ and $(\sum_{\alpha} (m_\alpha/n_\alpha){\bf 1}_{I_{\alpha}}, {\bf 1})$-scaling $B^*_{\rm c}$ of $\hat B$.

\paragraph{On the limit of the operator Sinkhorn algorithm.}
This suggests an expectation of the limiting behavior of 
the {\em operator Sinkhorn algorithm (Gurvits' algorithm),}
the standard algorithm for the operator scaling problem.
The operator Sinkhorn algorithm 
is viewed as alternating minimization of $f_A(x,y)$, where
each step  scales $A \to gA$ with $\mu(A) = (O, \ast)$ 
and $A \to Ah^{\dagger}$ with $\mu(A) = (\ast, O)$ alternatively.
When it is applied to the $(p^*+{\bf 1}/n, q^*+{\bf 1}/m)$-scaling $B^*$ of a diagonalized DM-decomposition $\hat B$, 
the resulting scaling sequence oscillates between  
the $({\bf 1},\sum_{\alpha} (n_\alpha/m_\alpha){\bf 1}_{J_{\alpha}})$-scaling and 
$(\sum_{\alpha} (m_\alpha/n_\alpha){\bf 1}_{I_{\alpha}}, {\bf 1})$-scaling of $B^*$.
With the view of Theorem~\ref{thm:limit_in_P(V)} and the matrix scaling case above, 
it is reasonable to conjecture that it oscillates between orbits $U_n\cdot B_{\rm r}^* \cdot U_m$
and $U_n \cdot B_{\rm c}^*\cdot U_m$, where $B_r^*$ (resp. $B_c^*$) is a 
$({\bf 1},\sum_{\alpha} (n_\alpha/m_\alpha){\bf 1}_{J_{\alpha}})$-scaling (resp. $(\sum_{\alpha} (m_\alpha/n_\alpha){\bf 1}_{I_{\alpha}}, {\bf 1})$-scaling) of $\hat B$.

\subsection{Kronecker form of a matrix pencil}\label{subsec:pencil}
Finally, we discuss the special case of $N = 2$, i.e., 
$A = (A_1,A_2)$.
In this case, $A$ is naturally identified with
a {\em matrix pencil} $s A_1 + A_2 \in \CC(s)^{n \times m}$, where $s$ is an indeterminate.
Here we reveal a connection to the {\em Kronecker canonical form} of $s A_1 + A_2$, and suggest 
a new numerical method for finding
the Kronecker structure based on gradient descent.

A pencil $s A_1+ A_2$ is called {\em regular} 
if $n=m$ and 
$\det (s A_1+A_2) \neq 0$ for some $s \in \CC$.
Otherwise, 
$s A_1+A_2$ is called {\em  singular}. 
For simplicity, we assume (again) that 
$\ker A_1 \cap \ker A_2 = \{0\}$ and $\ker A^{\dagger}_1 \cap \ker A^{\dagger}_2 = \{0\}$. 
The Kronecker form is a canonical form of a (singular) pencil
under transformation $(sA_1+A_2) \to g(sA_1+A_2)h^{\dagger}$ by $g \in GL_n$, $h \in GL_m$. The standard reference of the Kronecker form is 
\cite[Chapter XII]{Gantmacher}; see also \cite[Section 5.1.3]{MurotaMatrixMatroid} for its importance in systems analysis.  
For a positive integer $\epsilon$, 
define $\epsilon \times (\epsilon +1)$ 
matrix $L_\epsilon$ by
\begin{equation*}
    (L_\epsilon)_{ij} := 
    \left\{
    \begin{array}{ll}
    1 & {\rm if}\ j=i, \\
    s & {\rm if}\ j=i+1, \\
    0 & {\rm otherwise}.
    \end{array}
    \right.
\end{equation*}

\begin{Thm}[{Kronecker form; \cite[Chapter XII]{Gantmacher}}]\label{thm:KCF}
There are $g \in GL_n, h \in GL_m$ such that
\begin{equation}\label{eqn:KCF}
g(sA_1+A_2)h^{\dagger} =  L_{\epsilon_1} \oplus L_{\epsilon_2} 
\oplus \cdots \oplus L_{\epsilon_c} \oplus (sC+D)  \oplus L^{\dagger}_{\eta_d} \oplus L^{\dagger}_{\eta_{d-1}} 
\oplus \cdots \oplus L^{\dagger}_{\eta_1},
\end{equation}
where $sC+D$ is a regular pencil, 
and $\epsilon_1, \epsilon_2, \ldots, \epsilon_c$, $\eta_1,\eta_2,\ldots,\eta_d$ are positive integers determined as follows:
\begin{itemize}
\item $\epsilon_j$ is the minimum degree of a polynomial vector $x_j(s)$ in $\ker sA_1+A_2$ 
that is linearly independent from $x_{1}(s),x_{2}(s),\ldots, x_{j-1}(s)$ over $\CC(s)$.
\item $\eta_j$ is the minimum degree of a polynomial vector  
$y_j(s)$ in $\ker (sA_1+A_2)^{\dagger}$ 
that is linearly independent from $y_{1}(s),y_{2}(s),\ldots, y_{j-1}(s)$ over $\CC(s)$.
\end{itemize}
\end{Thm}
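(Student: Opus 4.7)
The plan is to follow the classical extract-and-recurse strategy (as in Gantmacher): peel off one $L_{\epsilon_1}$ block, then iterate; then do the dual argument on $(sA_1+A_2)^\dagger$ to peel off the $L_{\eta_j}^\dagger$ blocks; what is left must be regular. First I would pick a polynomial vector $x_1(s)=\sum_{k=0}^{\epsilon_1}s^k v_k \in \ker(sA_1+A_2)$ of minimum degree $\epsilon_1$. Matching coefficients in $(sA_1+A_2)x_1(s)=0$ yields the chain
\[
A_2 v_0 = 0,\qquad A_1 v_{k-1} = -A_2 v_k \ \ (1\le k\le \epsilon_1),\qquad A_1 v_{\epsilon_1}=0.
\]
The first step is to show, using minimality of $\epsilon_1$, that $v_0,\dots,v_{\epsilon_1}\in\CC^m$ are linearly independent, and likewise that $w_k:=A_1 v_k = -A_2 v_{k+1}\in\CC^n$ for $k=0,\dots,\epsilon_1-1$ are linearly independent. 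A nontrivial relation among the $v_k$'s, or among the $w_k$'s, can be transported back through the chain to produce a kernel polynomial of strictly smaller degree, contradicting minimality.

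Next, extend $\{v_k\}_{k=0}^{\epsilon_1}$ to a basis of $\CC^m$ and $\{w_k\}_{k=0}^{\epsilon_1-1}$ to a basis of $\CC^n$; the corresponding change of basis exhibits $sA_1+A_2$ in the block upper-triangular form
\[
\begin{pmatrix} L_{\epsilon_1} & sB_1+B_2 \\ 0 & sA_1'+A_2' \end{pmatrix},
\]
because the columns of $L_{\epsilon_1}$ read off the relations above and the lower-left block vanishes by $A_1 v_{\epsilon_1}=0$. The second, key step is to kill the off-diagonal $sB_1+B_2$ by a block-unipotent strict equivalence
\[
\begin{pmatrix} I & X \\ 0 & I \end{pmatrix}
\begin{pmatrix} L_{\epsilon_1} & sB_1+B_2 \\ 0 & sA_1'+A_2' \end{pmatrix}
\begin{pmatrix} I & Y \\ 0 & I \end{pmatrix}
= \begin{pmatrix} L_{\epsilon_1} & 0 \\ 0 & sA_1'+A_2' \end{pmatrix},
\]
which requires solving the Sylvester-type pencil equation $L_{\epsilon_1}Y - X(sA_1'+A_2') = sB_1+B_2$. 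The echelon structure of $L_{\epsilon_1}$ allows us to build $X,Y$ column-by-column; the only obstruction to termination is the appearance of a new kernel element of degree $<\epsilon_1$ in the bottom block, which minimality of $\epsilon_1$ forbids.

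With one $L_{\epsilon_1}$ extracted, I would recurse on $sA_1'+A_2'$, verifying that its minimum kernel degree is the next Kronecker index $\epsilon_2$ of the original pencil, until the right-kernel over $\CC(s)$ becomes trivial. Applying the whole procedure to the conjugate-transpose pencil then extracts the blocks $L_{\eta_j}^\dagger$. The residue has trivial kernel and cokernel over $\CC(s)$, hence is square with nonzero determinantal polynomial, i.e., regular; this is the $sC+D$ summand. The independence assertions on the degrees $\epsilon_j$ and $\eta_j$ (that they equal the successive minimal degrees of linearly independent polynomial kernel vectors) follow because strict equivalence preserves the $\CC(s)$-kernel and because the $L_{\epsilon_j}$-blocks generate kernel vectors of exactly those degrees.

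The main obstacle is the Sylvester-equation step (block-diagonalization): it is here that the combinatorics of the leading/trailing coefficients of $L_{\epsilon_1}$ meet the minimality hypothesis, and where a naïve direct construction can create spurious lower-degree kernel elements. The cleanest way around this I know is to formulate the equation in terms of the shift operator implicit in $L_{\epsilon_1}$ and argue by descent on the column index of $sB_1+B_2$; an alternative more conceptual route uses the theory of elementary divisors of $\CC[s]$-modules associated with the pencil, but the direct linear-algebraic construction is more transparent and is what I would write up.
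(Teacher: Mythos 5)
The paper states this theorem as a classical result, citing Gantmacher Chapter~XII, and supplies no proof of its own; there is therefore no in-paper argument to compare against. Your proposal reproduces the standard Gantmacher extract-and-recurse argument, which is precisely the cited reference, and the sketch is correct: the minimality argument for linear independence of the coefficient vectors $v_k$ and $w_k$, the block upper-triangularization, and the Sylvester-type decoupling step are the genuine technical content, and you have identified the right obstruction (a spurious lower-degree kernel vector in the bottom block) that minimality rules out. One small point to be made explicit in a full write-up: after extracting $L_{\epsilon_1}$, one must verify not only that the residual pencil $sA_1'+A_2'$ has minimal kernel degree $\ge \epsilon_1$, but also that its full list of minimal indices, together with $\epsilon_1$, coincides with the minimal indices of the original pencil --- this is the lemma that justifies the recursion and is where Gantmacher's treatment spends some care; you gesture at it (``verifying that its minimum kernel degree is the next Kronecker index'') but it deserves a sentence of its own.
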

The indices $\epsilon_1 \leq \cdots \leq \epsilon_c, \eta_1 \leq \cdots \leq \eta_d$, called the {\em minimal indices}, are uniquely determined. 
If $n=m$ and $sA_1+A_2$ is singular, 
then the Kronecker form has a zero block 
with the sum of row and column numbers greater than $n$. 
Therefore, by Theorem~\ref{thm:Gurvits1}, we have:
\begin{Cor}
A pencil $sA_1 + A_2$ is regular if and only if $n = m$ and 
$(A_1,A_2)$ is $({\bf 1}/n, {\bf 1}/n)$-scalable.
\end{Cor}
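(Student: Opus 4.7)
The plan is to verify both directions via Theorem~\ref{thm:Gurvits1}, after reducing to the Kronecker canonical form. Since membership in ${\cal S}_A$ and the dimensions of its members are invariant under the left--right action $A \mapsto gAh^\dagger$ (via $X \mapsto g^{-\top}X$, $Y \mapsto \overline{h^{-\top}}Y$), I may assume throughout that $(A_1, A_2)$ is already in Kronecker form~(\ref{eqn:KCF}).

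For the ``if'' direction I will argue by contrapositive, following the hint already given in the excerpt. Assuming $n = m$ but $sA_1 + A_2$ is singular, the Kronecker form must contain some $L_\epsilon$ block (of size $\epsilon \times (\epsilon+1)$) or some $L_\eta^\dagger$ block (of size $(\eta+1) \times \eta$). In the first case, indexing its rows by $I$ and its columns by $J$, block-diagonality yields $A_\ell[I^c, J] = 0$ for $\ell = 1, 2$, so the coordinate subspaces $X := \mathrm{span}\{e_i : i \in I^c\}$ and $Y := \mathrm{span}\{e_j : j \in J\}$ form a pair in ${\cal S}_A$ with $\dim X + \dim Y = (n-\epsilon) + (\epsilon + 1) = n + 1 > n$, violating Theorem~\ref{thm:Gurvits1}~(iii). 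The case of an $L_\eta^\dagger$ block is entirely symmetric (swap the roles of rows and columns). Hence $(A_1, A_2)$ fails to be $({\bf 1}/n, {\bf 1}/n)$-scalable.

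The harder direction is the ``only if.'' Assume $sA_1 + A_2$ is regular; by definition $n = m$, and the Kronecker form~(\ref{eqn:KCF}) collapses to a single regular block, so I may take $A_1 = C$, $A_2 = D$ with $\det(sC + D) \not\equiv 0$. To invoke Theorem~\ref{thm:Gurvits1}, I must verify $\dim X + \dim Y \leq n$ for every $(X, Y) \in {\cal S}_A$. The approach is to extend scalars to the rational function field $\CC(s)$: the conditions $X^\top A_\ell \bar Y = 0$ for $\ell = 1, 2$ together imply $(sA_1 + A_2)\bar Y \subseteq X^\perp \otimes_\CC \CC(s)$, where $X^\perp := \{w \in \CC^n : x^\top w = 0 \ \forall x \in X\}$ has $\CC$-dimension $n - \dim X$. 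Invertibility of $sA_1 + A_2$ over $\CC(s)$ makes the map $\bar Y \mapsto (sA_1 + A_2)\bar Y$ dimension-preserving, forcing $\dim Y \leq n - \dim X$, as required.

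The main obstacle I anticipate is the bookkeeping in the extension-of-scalars step: I need to verify that the $\CC$-linear conditions $X^\top A_\ell \bar Y = 0$ indeed upgrade to the claimed $\CC(s)$-linear containment, and that complex conjugation does not distort the dimension count (it does not, since $Y \mapsto \bar Y$ is a $\CC$-linear bijection between subspaces of the same dimension). The singular case, by contrast, is purely combinatorial once a non-regular Kronecker block has been isolated.
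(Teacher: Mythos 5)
Your proof is correct and follows the same route as the paper: reduce to the Kronecker canonical form and apply Theorem~\ref{thm:Gurvits1}~(iii), with the singular direction handled by exhibiting a subspace pair from a rectangular Kronecker block and the regular direction by the $\CC(s)$-rank observation (which the paper records as "otherwise $sA_1+A_2$ is singular over $\CC(s)$" in the proof of the subsequent Proposition). One tiny notational slip: under $A \mapsto gAh^\dagger$ the correspondence on ${\cal S}_A$ is $(X,Y) \mapsto (g^{-\top}X,\, h^{-\top}Y)$, not $\overline{h^{-\top}}Y$; this does not affect the dimension count or the rest of the argument.
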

We point out a further connection that the Kronecker form~(\ref{eqn:KCF})  
is viewed as almost a DM-decomposition.
Let $b$ denote the number of diagonal blocks of $gAh^{\dagger}$ in (\ref{eqn:KCF}).
For $\gamma \in [b]$, 
let $I_\gamma$ and $J_\gamma$ denote
the row and column index sets, respectively, of
the $\gamma$-th diagonal block of $gAh^{\dagger}$. 
Define $X_\gamma$ 
by the vector subspace 
spanned by the rows of $g$ of indices in 
$I_{\gamma+1} \cup I_{\gamma+2} \cup \cdots \cup I_{b}$.
Similarly, define $Y_\gamma$ by
the vector subspace 
spanned by the rows of $h$ having 
indices in 
$J_{1} \cup \cdots \cup J_{\gamma}$. 
We let $(X_0,Y_0) := (\CC^n, \{0\})$ (and $(X_b,Y_b) = (\{0\}, \CC^m))$.
Suppose that $s C+D (= g(sA_1+A_2)h^{\dagger}[I_{c+1},J_{c+1}])$ exists and is an $n_0 \times n_0$ upper triangular matrix.
Let $Z_\beta$ denote the vector space 
spanned by the rows of $g$ having the last $n_0 - \beta$ indices in $I_{c+1}$, 
and let $W_\beta$ denote the vector space 
spanned by the rows of $h$ having the first $\beta$ indices in $J_{c+1}$.
Let $X_{c,\beta} := X_{c+1}+Z_\beta$ and $Y_{c,\beta} := Y_{c}+W_\beta$, where $+$ is the direct sum.
Consider all indices $\gamma$ with $(|I_\gamma|,|J_\gamma|) \neq (|I_{\gamma+1}|,|J_{\gamma+1}|)$, and suppose that they 
are ordered as $0 =: \gamma_0 < \gamma_1 < \cdots < \gamma_\theta := b$.

\begin{Prop}
\begin{itemize}
\item[(1)] $\{(X_{\gamma_\alpha},Y_{\gamma_\alpha})\}^\theta_{\alpha=0}$ is the coarse DM-flag of $(A_1,A_2)$.
\item[(2)] Suppose that $s C+D$ is an $n_0 \times n_0$ upper triangular pencil. Then the union of 
$\{(X_{\gamma}, Y_{\gamma})\}_{\gamma= 0}^b$ and 
$\{ (X_{c,\beta},Y_{c,\beta})\}_{\beta = 1}^{n_0-1}$ is a DM-flag of $(A_1,A_2)$.
\end{itemize}
\end{Prop}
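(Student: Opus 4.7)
The plan is to first exploit the invariance $\phi({\cal S}_A)=\phi({\cal S}_{gAh^\dagger})$ under the left–right action, which holds because $g$ and $h$ are linear isomorphisms preserving dimension. Hence I may assume without loss of generality that $A$ is already in the Kronecker form (\ref{eqn:KCF}), and that the pairs $(X_\gamma,Y_\gamma)$ and $(X_{c,\beta},Y_{c,\beta})$ in the statement have been transported to coordinate subspaces of the Kronecker basis. Under this assumption, $(X_\gamma,Y_\gamma)\in{\cal S}_A$ is immediate from the block-diagonal pattern: since every nonzero entry of $A_1$ or $A_2$ sits inside some diagonal block $I_\beta\times J_\beta$, the entry $(A_\ell)_{ij}$ vanishes whenever $i$ and $j$ belong to distinct block groups, which is the case for every $i\in I_{\gamma+1}\cup\cdots\cup I_b$ paired with $j\in J_1\cup\cdots\cup J_\gamma$. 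For the refined pairs $(X_{c,\beta},Y_{c,\beta})$ in part (2), only entries inside the regular block require further attention, and these vanish by the assumed upper-triangularity of $sC+D$: the chosen row indices occupy the last $n_0-\beta$ positions and the chosen column indices the first $\beta$ positions, which is strictly below the diagonal.

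The second step is to analyze the walk $\gamma\mapsto\phi(X_\gamma,Y_\gamma)$. It starts at $(n,0)$, ends at $(0,m)$, and each unit increment in $\gamma$ contributes $(-|I_{\gamma+1}|,+|J_{\gamma+1}|)$, which equals $(-\epsilon_j,\epsilon_j+1)$, $(-n_0,n_0)$, or $(-\eta_j-1,\eta_j)$ according to the block type. Drawn with $\dim Y$ on the horizontal axis, the resulting segments have slopes $-\epsilon_j/(\epsilon_j+1)\in(-1,0)$, $-1$, and $-(\eta_j+1)/\eta_j\in(-\infty,-1)$, respectively. The orderings $\epsilon_1\leq\cdots\leq\epsilon_c$ and $\eta_d\geq\cdots\geq\eta_1$ built into (\ref{eqn:KCF}) make this slope sequence monotone nonincreasing, so the polygonal path is concave; its vertices are precisely those indices $\gamma$ at which the slope actually changes, i.e., exactly $\gamma_0<\gamma_1<\cdots<\gamma_\theta$, giving the candidate coarse flag.

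The main obstacle is to show that this concave path is the entire upper boundary of $\Conv\phi({\cal S}_A)$, i.e., no $(X',Y')\in{\cal S}_A$ has $\phi(X',Y')$ strictly above it. I plan a supporting-hyperplane argument: at each vertex, choose $(a,b)\in\RR^2_+$ maximizing $a\dim X+b\dim Y$ along the path and show the same bound persists on all of ${\cal S}_A$. The inequality reduces block-by-block, using the block-diagonal form of $A$ and the direct observation that each single Kronecker block admits only the trivial extremal ${\cal S}$-pairs, namely the full row-space paired with $\{0\}$ and $\{0\}$ paired with the full column-space (verified by inspection on $L_\epsilon$, on the regular block, and on $L^\dagger_\eta$); a general $(X',Y')$ is then ``straightened'' to a block-aligned pair without decreasing $a\dim X+b\dim Y$ by an inductive filtration along the minimal polynomial vectors in $\ker(sA_1+A_2)$ and $\ker(sA_1+A_2)^\dagger$ that generate the singular Kronecker blocks. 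An alternative route invokes Theorem~\ref{thm:scalability_limit}: the functional $(a,b)$ corresponds, after mean-zero shift, to a scaling target $(p,q)$ whose feasibility is witnessed by an explicit $1$-PSG read off the Kronecker basis, so any excess $(X',Y')$ would contradict Theorem~\ref{thm:Gurvits1}. This straightening (or the explicit scalability certificate) is the principal technical point. Once it is in place, part (2) follows from two lattice-adjacency observations: $\gcd(\epsilon,\epsilon+1)=\gcd(\eta,\eta+1)=1$ excludes intermediate integer lattice points on each $L_\epsilon$- or $L^\dagger_\eta$-edge between consecutive $(X_\gamma,Y_\gamma)$, and the chain $\{(X_{c,\beta},Y_{c,\beta})\}_{\beta=0}^{n_0}$ exhausts the integer lattice points on the slope-$-1$ edge in unit $(-1,+1)$ steps; hence no element of ${\cal L}_A^\alpha$ can be interposed, and the concatenated chain is maximal in ${\cal L}_A$, i.e., a DM-flag.
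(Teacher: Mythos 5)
Your setup is sound and parallels the paper's up to a point: reducing to the Kronecker basis, noting $(X_\gamma,Y_\gamma)\in{\cal S}_A$ from the block pattern, and observing that the walk $\gamma\mapsto\phi(X_\gamma,Y_\gamma)$ traces a concave polygonal path whose slope changes exactly at the $\gamma_\alpha$. Your treatment of part (2) via the $\gcd(\epsilon,\epsilon+1)=1$ lattice-point count and the unit $(-1,+1)$ steps on the regular block is also essentially the paper's observation, stated more explicitly.

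The gap is in the crucial middle step: you must show that no $(X',Y')\in{\cal S}_A$ has $\phi(X',Y')$ strictly above the concave path, and you only sketch two routes without completing either. The ``straightening'' of a general $(X',Y')$ to a block-aligned pair ``by an inductive filtration along the minimal polynomial vectors'' is not obviously true and you give no argument for why the replacement cannot decrease $a\dim X'+b\dim Y'$ for the chosen supporting functional; this is precisely the hard content of the proposition, and you acknowledge as much. The alternative route via Theorem~\ref{thm:scalability_limit} is likewise only gestured at. The paper instead runs an induction on the number $b$ of Kronecker blocks, with a single-block base case (regular, $L_n$, $L_n^\dagger$, each handled by a concrete degree or dimension argument) and an inductive step that uses the modular-lattice structure of ${\cal S}_A$: if $\phi(X',Y')$ lay outside the candidate hull $K_A$, one forms $(X_{\gamma^*}+X',\,Y_{\gamma^*}\cap Y')$ and $(X_{\gamma^*}\cap X',\,Y_{\gamma^*}+Y')$, invokes the dimension identity (\ref{eqn:modular_equality}) to conclude one of them also lands outside $K_A$, and restricts that one to a sub-Kronecker pencil with fewer blocks, contradicting the inductive hypothesis. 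That mechanism---splitting along a vertex of the path rather than trying to align $(X',Y')$ with blocks directly---is what makes the argument close; your proposal does not supply a substitute for it.
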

\begin{proof}
(1). Suppose that ${\cal E}_A$ consists of $(X'_{\alpha},Y'_{\alpha})$ 
for $\alpha = 0,1,2\ldots,\theta'$, arranged as in (\ref{eqn:(X_alpha,Y_alpha)}).
We show $(X'_{\alpha},Y'_{\alpha}) = (X_{\gamma_\alpha},Y_{\gamma_\alpha})$ 
for $\alpha = 0,1,2\ldots,\theta' = \theta$.
Consider the convex hull $K_A$ of $(0,0)$ and $\phi(X_\gamma,Y_\gamma)$ for all $\gamma$.
Then $K_A$ belongs to $\Conv \phi({\cal S}_A)$, and the maximal faces of $K_A$ 
are composed of the line segments connecting points
$\phi(X_\gamma,Y_\gamma)$ from $\gamma=0$ to $b$
with bending points $\phi(X_{\gamma_\alpha},Y_{\gamma_\alpha})$.

We show  $K_A = \Conv \phi({\cal S}_A)$ by induction on the number $b$ of diagonal blocks.
Consider the base case $b=1$
where the Kronecker form consists of a single block.
It suffices to show ${\cal E}_A = \{(\CC^{n},0), (0, \CC^{m})\}$. 
Suppose that $sA_1+A_2$ is an $n_0 \times n_0$ regular pencil $sC+D$.
By regularity, there is no $(X,Y) \in {\cal S}_A$ with $\dim X+\dim Y > n_0$ 
(otherwise $sA_1+A_2$ is singular over $\CC(s)$).
This means no point in $\phi({\cal S}_A)$ beyond the line segment between $(n_0,0)$ and $(0,n_0)$.
Therefore, we have ${\cal E}_A = \{(\CC^{n},0), (0, \CC^{m})\}$.
Suppose that $sA_1+A_2 = L_n$.
Suppose to the contrary that there is $(X,Y) \in {\cal S}_A$  with $\dim X/n+\dim Y/(n+1) > 1$.  
By basis change,  we may assume that 
\[
sA_1 + A_2 = \left( 
\begin{array}{cc}
B & C \\
O & D
\end{array}
\right),
\]
where $O$ is the $r \times s$ zero matrix for $(r,s):= (\dim X,\dim Y)$.
By $r \geq 1$ and $r + s \geq n+1$, 
$B$ is a pencil of $n-r$ rows and $s$ columns with $s > n-r$.
Then $\ker B$ 
contains a polynomial vector 
with degree at most $n-r < n$; use Cramer's formula to see this.
Necessarily, $\ker sA_1+A_2$ also has such a polynomial vector. 
This is a contradiction to Theorem~\ref{thm:KCF} ($\epsilon_1 = \epsilon_c = n$).  
The case $sA_1+A_2 = L_n^{\dagger}$ is similar.

Consider a general case of $b \geq 2$.
We can choose $\gamma^*, \alpha^*$ 
such that $0 < \gamma^* < b$, $0 < \alpha^* < \theta'$, 
and the line segment between 
$\phi(X_{\gamma^*},Y_{\gamma^*})$ and $\phi(X'_{\alpha^*},Y'_{\alpha^*})$
meets with $K_A$ only at $\phi(X_{\gamma^*},Y_{\gamma^*})$. 
%
Consider $(U,V)  := (X_{\gamma^*} + X'_{\alpha^*}, Y_{\gamma^*} \cap Y'_{\alpha^*})$ and 
 $(U',V')  := (X_{\gamma^*} \cap X'_{\alpha^*}, Y_{\gamma^*} + Y'_{\alpha^*})$.
 By the construction and 
 (\ref{eqn:modular_equality}), one of $\phi(U,V)$ and $\phi(U',V')$ is outside of $K_A$.
Suppose that $\phi(U,V) \not \in K_A$.
Consider the submatrix $A' := (sA_1+A_2)[\bigcup_{\gamma=1}^{\gamma^*} I_\gamma, \bigcup_{\gamma=1}^{\gamma^*} J_\gamma]$,   
that is also a Kronecker form with a smaller number of blocks. 
From $U \supseteq X_{\gamma^*}$, $V \subseteq Y_{\gamma^*}$,  and $\phi(U,V) \not \in K_A$,
it necessarily holds $K_{A'} \neq \Conv \phi({\cal S}_{A'})$. 
However, this is a contradiction to the inductive assumption.
The case $\phi(U',V') \not \in K_A$ is similar; 
consider the sub-Kronecker form $(sA_1+A_2)[\bigcup_{\gamma=\gamma^*+1}^b I_\gamma, \bigcup_{\gamma=\gamma^*+1}^b J_\gamma]$. 

(2). Observe that all integer points in the maximal faces of $\Conv \phi({\cal S}_A)$
are obtained by the images of $(X_\gamma,Y_\gamma)$ and $(X_{c,\beta},Y_{c,\beta})$.
This implies that $\{(X_{\gamma}, Y_{\gamma})\}_\gamma \cup \{ (X_{c,\beta},Y_{c,\beta})\}_\beta$ 
is a maximal chain of ${\cal L}_A$.
\end{proof}

The matrix pencil $g(sA_1+A_2)h^{\dagger}$ corresponding to 
a coarse DM-decomposition $g(A_1,A_2)h^{\dagger}$, which we call 
a {\em coarse Kronecker triangular form}, 
is a refinement of a {\em quasi-Kronecker triangular form} 
in~\cite{ThomasStephan2012} 
and {\em generalized Schur form} in \cite{DemmelKagstrom1993,VanDooren1979} 
if $g,h$ are unitary matrices and $s C+D$ is triangular.

Then, the convergence (Theorem~\ref{thm:convergence_to_cDM}) of
gradient descent (\ref{eqn:DGR_F_A}) can be applied as follows:
\begin{Thm}[Convergence to a coarse Kronecker triangular form]\label{thm:convergence_to_cKronecker}
Let $(x_k,y_k)$ be a solution of (\ref{eqn:DGR_F_A}).
Decompose $x_k = \sigma_k^{\dagger} e^{\diag a^k} \sigma_k$ 
	and $y_k = \tau_k^{\dagger} e^{\diag b^k} \tau_k$, 
 where $\sigma_k$ and $\tau_k$ are unitary matrices, 
 and $a^k$ and $b^k$ are nondecreasing and nonincreasing vectors, respectively.
    Then, 
   $\sigma_k (sA_1+A_2) \tau_k^{\dagger}$ accumulates to coarse Kronecker triangular forms,  
   where the convergence is linear in the same sense as in Theorem~\ref{thm:convergence_to_cDM}.
\end{Thm}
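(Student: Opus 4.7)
The plan is to derive Theorem~\ref{thm:convergence_to_cKronecker} as a direct consequence of Theorem~\ref{thm:convergence_to_cDM}, applied to the matrix tuple $A = (A_1,A_2)$, combined with the identification between coarse DM-decompositions of $(A_1,A_2)$ and coarse Kronecker triangular forms of $sA_1+A_2$ that was just established in the preceding Proposition.

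First, I would observe that the gradient descent~(\ref{eqn:DGR_F_A}) depends on the tuple $A=(A_1,A_2)$ through $F_A$ only, not on the indeterminate $s$. Thus the unitary factors $\sigma_k, \tau_k$ and the spectra $p^k, q^k$ appearing in Theorem~\ref{thm:convergence_to_cKronecker} are exactly those produced when Theorem~\ref{thm:convergence_to_cDM} is applied to $(A_1,A_2)$. In particular, with $(I_\alpha, J_\alpha)$ the coarse DM-partition of $(A_1,A_2)$, Theorem~\ref{thm:convergence_to_cDM}~(2) supplies constants $c>0$, $M>0$ such that
\begin{equation*}
    |(\sigma_k A_\ell \tau_k^\dagger)_{ij}| \leq e^{-ck} \qquad (k \geq M,\ \ell \in \{1,2\},\ (i,j) \in I_\alpha \times J_{\alpha'},\ \alpha > \alpha').
\end{equation*}

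Second, by linearity the same bound transfers to the pencil: for every $(i,j)$ in a strictly lower-triangular block (with respect to the coarse DM partition) one has $|(\sigma_k (sA_1+A_2) \tau_k^\dagger)_{ij}| \leq (|s|+1) e^{-ck}$. Hence the strictly lower-triangular blocks of the pencil $\sigma_k(sA_1+A_2)\tau_k^\dagger$ decay to zero linearly in $k$, yielding the quantitative part of the conclusion.

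Finally, by compactness of $U_n \times U_m$, every accumulation point of the sequence $\sigma_k(sA_1+A_2)\tau_k^\dagger$ is of the form $sB_1+B_2$ where $(B_1,B_2)$ is an accumulation point of $\sigma_k(A_1,A_2)\tau_k^\dagger$; by Theorem~\ref{thm:convergence_to_cDM}~(2) such $(B_1,B_2)$ is a coarse DM-decomposition of $(A_1,A_2)$, and by the preceding Proposition the associated pencil $sB_1+B_2$ is a coarse Kronecker triangular form of $sA_1+A_2$. There is no real obstacle here; the theorem is essentially a translation of Theorem~\ref{thm:convergence_to_cDM} into the pencil language via the dictionary supplied by the Proposition, the only thing to verify being that linear decay of every $(\sigma_k A_\ell \tau_k^\dagger)_{ij}$ passes through the linear combination $sA_1+A_2$, which is immediate.
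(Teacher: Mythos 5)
Your proposal is correct and matches the paper's intent exactly; the paper in fact offers no separate proof, stating only that Theorem~\ref{thm:convergence_to_cDM} ``can be applied,'' so the argument you supply (same $\sigma_k,\tau_k,p^k,q^k$ since the gradient descent depends on $A$ only through $F_A$; linear decay of the strictly lower-triangular blocks passes through the linear combination $sA_1+A_2$; compactness of the unitary groups gives accumulation points) is precisely the intended reasoning. One small clarification: what you call ``the dictionary supplied by the Proposition'' is really the \emph{definition} of coarse Kronecker triangular form given just before the theorem---the paper defines $g(sA_1+A_2)h^{\dagger}$ to be a coarse Kronecker triangular form exactly when $g(A_1,A_2)h^{\dagger}$ is a coarse DM-decomposition---so the translation is definitional rather than a content of the Proposition, which instead shows the Kronecker canonical form (\ref{eqn:KCF}) aligns with the DM-flag; this does not affect the correctness of your argument.
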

A coarse Kronecker triangular form is enough for determining 
the structure of the Kronecker form. 
Indeed, each (non-square) rectangular diagonal block is 
a $k\nu \times k(\nu+1)$ or 
$k(\nu+1) \times k\nu$ matrix
for some integers $k, \nu$, from which all minimal indices 
$\epsilon_1,\epsilon_2,\ldots,\epsilon_c$,
$\eta_1,\eta_2,\ldots,\eta_d$ can be identified.

The above theorem suggests an iterative method 
for determining the minimal indices of a singular pencil, 
which is based on simple gradient descent and is conceptually different from 
the existing algorithms, e.g., \cite{DemmelKagstrom1993,VanDooren1979}.
It is an interesting future direction to
develop a numerically stable algorithm based on this approach.

\section*{Acknowledgments}
We thank Shin-ichi Ohta, Harold Nieuwboer, and Michael Walter for discussion, Yuni Iwamasa, Taihei Oki and Tasuku Soma for comments, 
and Shun Sato for suggesting~\cite{SanzSernaZygalakis2020}.
We also thank the referees for numerous helpful comments.
The first author was supported by JSPS KAKENHI Grant Number JP21K19759, JP24K21315.
The second author was supported by the European Union (ERC Starting Grant SYMOPTIC, 101040907) and by the Deutsche Forschungsgemeinschaft (DFG, German Research Foundation, 556164098).

\bibliographystyle{plain}
\bibliography{gradientflow}

\begin{thebibliography}{10}

\bibitem{AGLOW_STOC2018}
Z.~Allen{-}Zhu, A.~Garg, Y.~Li, R.~Oliveira, and A.~Wigderson.
\newblock Operator scaling via geodesically convex optimization, invariant
  theory and polynomial identity testing.
\newblock In {\em Proceedings of the 50th Annual {ACM} {SIGACT} Symposium on
  Theory of Computing, {STOC} 2018}, pages 172--181. {ACM}, 2018.

\bibitem{AlvarezBolteBrahic2004}
F.~Alvarez, J.~Bolte, and O.~Brahic.
\newblock Hessian {R}iemannian gradient flows in convex programming.
\newblock {\em SIAM J. Control Optim.}, 43(2):477--501, 2004.

\bibitem{Ambrosio_Book}
L.~Ambrosio, N.~Gigli, and G.~Savar\'{e}.
\newblock {\em Gradient Flows: In Metric Spaces and in the Space of Probability
  Measures}.
\newblock Birkh\"{a}user, Basel, second edition, 2008.

\bibitem{Auslender1997}
A.~Auslender.
\newblock How to deal with the unbounded in optimization: theory and
  algorithms.
\newblock {\em Math. Programming}, 79(1-3):3--18, 1997.

\bibitem{Bacak_Book2014}
M.~Ba\v{c}\'{a}k.
\newblock {\em Convex Analysis and Optimization in {H}adamard Spaces}.
\newblock De Gruyter, Berlin, 2014.

\bibitem{Amir_Book}
A.~Beck.
\newblock {\em First-Order Methods in Optimization}.
\newblock Society for Industrial and Applied Mathematics, Philadelphia, PA,
  2017.

\bibitem{ThomasStephan2012}
T.~Berger and S.~Trenn.
\newblock The quasi-{K}ronecker form for matrix pencils.
\newblock {\em SIAM J. Matrix Anal. Appl.}, 33(2):336--368, 2012.

\bibitem{Boumal_Book}
N.~Boumal.
\newblock {\em An Introduction to Optimization on Smooth Manifolds}.
\newblock Cambridge University Press, Cambridge, 2023.

\bibitem{BridsonHaefliger1999}
M.~R. Bridson and A.~Haefliger.
\newblock {\em Metric Spaces of Non-Positive Curvature}.
\newblock Springer-Verlag, Berlin, 1999.

\bibitem{Bubeck15}
S.~Bubeck.
\newblock Convex optimization: Algorithms and complexity.
\newblock {\em Found. Trends Mach. Learn.}, 8(3-4):231--357, 2015.

\bibitem{BFGOWW_FOCS2018}
P.~B{\"{u}}rgisser, C.~Franks, A.~Garg, R.~Oliveira, M.~Walter, and
  A.~Wigderson.
\newblock Efficient algorithms for tensor scaling, quantum marginals, and
  moment polytopes.
\newblock In {\em 59th {IEEE} Annual Symposium on Foundations of Computer
  Science, {FOCS} 2018}, pages 883--897, 2018.

\bibitem{BFGOWW_FOCS2019}
P.~B\"{u}rgisser, C.~Franks, A.~Garg, R.~Oliveira, M.~Walter, and A.~Wigderson.
\newblock Towards a theory of non-commutative optimization: geodesic 1st and
  2nd order methods for moment maps and polytopes.
\newblock In {\em 60th {IEEE} {A}nnual {S}ymposium on {F}oundations of
  {C}omputer {S}cience, FOCS 2019}, pages 845--861, 2019.

\bibitem{BLNW2020interiorpoint}
P.~Bürgisser, Y.~Li, H.~Nieuwboer, and M.~Walter.
\newblock Interior-point methods for unconstrained geometric programming and
  scaling problems, 2020.
\newblock arXiv:2008.12110.

\bibitem{CapraceLytchak2010}
P.-E. Caprace and A.~Lytchak.
\newblock At infinity of finite-dimensional {CAT}(0) spaces.
\newblock {\em Math. Ann.}, 346(1):1--21, 2010.

\bibitem{ChenSun2014}
X.~Chen and S.~Sun.
\newblock Calabi flow, geodesic rays, and uniqueness of constant scalar
  curvature {K}\"{a}hler metrics.
\newblock {\em Ann. of Math. (2)}, 180(2):407--454, 2014.

\bibitem{DemmelKagstrom1993}
J.~Demmel and B.~K\a{r}agstr\"{o}m.
\newblock The generalized {S}chur decomposition of an arbitrary pencil
  {$A-\lambda B$}: robust software with error bounds and applications. {I}.
  {T}heory and algorithms.
\newblock {\em ACM Trans. Math. Software}, 19(2):160--174, 1993.

\bibitem{DulmageMendelsohn1958}
A.~L. Dulmage and N.~S. Mendelsohn.
\newblock Coverings of bipartite graphs.
\newblock {\em Canadian J. Math.}, 10:517--534, 1958.

\bibitem{Franks2018}
C.~Franks.
\newblock Operator scaling with specified marginals.
\newblock In {\em {P}roceedings of the 50th {A}nnual {ACM} {SIGACT} {S}ymposium
  on {T}heory of {C}omputing (STOC)}, pages 190--203. ACM, New York, 2018.

\bibitem{FranksSomaGoemans_SODA2023}
C.~Franks, T.~Soma, and M.~X. Goemans.
\newblock Shrunk subspaces via operator {S}inkhorn iteration.
\newblock In {\em Proceedings of the 2023 {A}nnual {ACM}-{SIAM} {S}ymposium on
  {D}iscrete {A}lgorithms ({SODA})}, pages 1655--1668. SIAM, Philadelphia, PA,
  2023.

\bibitem{Gantmacher}
F.~R. Gantmacher.
\newblock {\em The Theory of Matrices. {V}ols. 1, 2}.
\newblock Chelsea Publishing Co., New York, 1959.

\bibitem{GGOW_BrascampLieb}
A.~Garg, L.~Gurvits, R.~Oliveira, and A.~Wigderson.
\newblock Algorithmic and optimization aspects of {B}rascamp-{L}ieb
  inequalities, via operator scaling.
\newblock {\em Geom. Funct. Anal.}, 28(1):100--145, 2018.

\bibitem{GGOW}
A.~Garg, L.~Gurvits, R.~Oliveira, and A.~Wigderson.
\newblock Operator scaling: theory and applications.
\newblock {\em Found. Comput. Math.}, 20(2):223--290, 2020.

\bibitem{GargOliveira2018}
A.~Garg and R.~Oliveira.
\newblock Recent progress on scaling algorithms and applications.
\newblock {\em Bull. Eur. Assoc. Theor. Comput. Sci. EATCS}, (125):14--49,
  2018.

\bibitem{GRS_Book}
V.~Georgoulas, J.~W. Robbin, and D.~A. Salamon.
\newblock {\em The Moment-Weight Inequality and the {H}ilbert-{M}umford
  Criterion---{GIT} from the Differential Geometric Viewpoint}, volume 2297 of
  {\em Lecture Notes in Mathematics}.
\newblock Springer, Cham, 2021.

\bibitem{GuilleminSternberg1982}
V.~Guillemin and S.~Sternberg.
\newblock Convexity properties of the moment mapping.
\newblock {\em Invent. Math.}, 67(3):491--513, 1982.

\bibitem{GuilleminSternberg1984}
V.~Guillemin and S.~Sternberg.
\newblock Convexity properties of the moment mapping. {II}.
\newblock {\em Invent. Math.}, 77(3):533--546, 1984.

\bibitem{Gurvits2004}
L.~Gurvits.
\newblock Classical complexity and quantum entanglement.
\newblock {\em J. Comput. System Sci.}, 69(3):448--484, 2004.

\bibitem{HamadaHirai2021}
M.~Hamada and H.~Hirai.
\newblock Computing the nc-rank via discrete convex optimization on {${\rm
  CAT}(0)$} spaces.
\newblock {\em SIAM J. Appl. Algebra Geom.}, 5(3):455--478, 2021.

\bibitem{HayashiHiraiSakabe}
K.~Hayashi, H.~Hirai, and K.~Sakabe.
\newblock Finding {H}all blockers by matrix scaling.
\newblock {\em Mathematics of Operations Research}, 49:2166--2179, 2024.

\bibitem{Hirai_Hadamard2022}
H.~Hirai.
\newblock Convex analysis on {H}adamard spaces and scaling problems.
\newblock {\em Found. Comput. Math.}, 24:1979--2016, 2024.

\bibitem{Hirai2025entanglement}
H.~Hirai.
\newblock Generalized gradient flows in {H}adamard manifolds and convex
  optimization on entanglement polytopes, 2025.
\newblock arXiv:2511.12064.

\bibitem{Hirai2025}
H.~Hirai.
\newblock A scaling characterization of nc-rank via unbounded gradient flow.
\newblock {\em Linear Algebra Appl.}, 730:525--545, 2026.

\bibitem{HiraiNieuwboerWalter2023FOCS}
H.~Hirai, H.~Nieuwboer, and M.~Walter.
\newblock Interior-point methods on manifolds: theory and applications.
\newblock In {\em 64th {IEEE} Annual Symposium on Foundations of Computer
  Science, {FOCS} 2023}, pages 2021--2030. {IEEE}, 2023.

\bibitem{Hiriart-UrrutyLemarechal}
J.-B. Hiriart-Urruty and C.~Lemar\'{e}chal.
\newblock {\em Fundamentals of Convex Analysis}.
\newblock Springer-Verlag, Berlin, 2001.

\bibitem{ItoIwataMurota1994}
H.~Ito, S.~Iwata, and K.~Murota.
\newblock Block-triangularizations of partitioned matrices under
  similarity/equivalence transformations.
\newblock {\em SIAM J. Matrix Anal. Appl.}, 15(4):1226--1255, 1994.

\bibitem{IQS2017}
G.~Ivanyos, Y.~Qiao, and K.~V. Subrahmanyam.
\newblock Non-commutative {E}dmonds' problem and matrix semi-invariants.
\newblock {\em Comput. Complex.}, 26(3):717--763, 2017.

\bibitem{IQS2018}
G.~Ivanyos, Y.~Qiao, and K.~V. Subrahmanyam.
\newblock Constructive non-commutative rank computation is in deterministic
  polynomial time.
\newblock {\em Comput. Complex.}, 27(4):561--593, 2018.

\bibitem{IOS_ICALP2025}
Y.~Iwamasa, T.~Oki, and T.~Soma.
\newblock Algorithmic aspects of semistability of quiver representations.
\newblock In {\em 52nd International Colloquium on Automata, Languages, and
  Programming, {ICALP} 2025}, volume 334 of {\em LIPIcs}, pages 99:1--99:18.
  Schloss Dagstuhl - Leibniz-Zentrum f{\"{u}}r Informatik, 2025.

\bibitem{KLM2009JDG}
M.~Kapovich, B.~Leeb, and J.~Millson.
\newblock Convex functions on symmetric spaces, side lengths of polygons and
  the stability inequalities for weighted configurations at infinity.
\newblock {\em J. Differential Geom.}, 81(2):297--354, 2009.

\bibitem{KarlssonMargulis1999}
A.~Karlsson and G.~A. Margulis.
\newblock A multiplicative ergodic theorem and nonpositively curved spaces.
\newblock {\em Comm. Math. Phys.}, 208(1):107--123, 1999.

\bibitem{Kempf1978}
G.~R. Kempf.
\newblock Instability in invariant theory.
\newblock {\em Ann. of Math. (2)}, 108(2):299--316, 1978.

\bibitem{Kirwan1984}
F.~Kirwan.
\newblock Convexity properties of the moment mapping. {III}.
\newblock {\em Invent. Math.}, 77(3):547--552, 1984.

\bibitem{KleinerLeeb2006}
B.~Kleiner and B.~Leeb.
\newblock Rigidity of invariant convex sets in symmetric spaces.
\newblock {\em Invent. Math.}, 163(3):657--676, 2006.

\bibitem{KwokLauRamchandran_SICOMP2021}
T.~C. Kwok, L.~C. Lau, and A.~Ramachandran.
\newblock Spectral analysis of matrix scaling and operator scaling.
\newblock {\em {SIAM} J. Comput.}, 50(3):1034--1102, 2021.

\bibitem{LuFreundNestrov2018}
H.~Lu, R.~M. Freund, and Y.~Nesterov.
\newblock Relatively smooth convex optimization by first-order methods, and
  applications.
\newblock {\em SIAM J. Optim.}, 28(1):333--354, 2018.

\bibitem{Mayer1998}
U.~F. Mayer.
\newblock Gradient flows on nonpositively curved metric spaces and harmonic
  maps.
\newblock {\em Comm. Anal. Geom.}, 6(2):199--253, 1998.

\bibitem{MurotaMatrixMatroid}
K.~Murota.
\newblock {\em Matrices and Matroids for Systems Analysis}.
\newblock Springer-Verlag, Berlin, 2000.

\bibitem{NemirovskyYudin}
A.~S. Nemirovsky and D.~B. Yudin.
\newblock {\em Problem Complexity and Method Efficiency in Optimization}.
\newblock John Wiley \& Sons, Inc., New York, 1983.

\bibitem{Obuchowska2004}
W.~T. Obuchowska.
\newblock On the minimizing trajectory of convex functions with unbounded level
  sets.
\newblock {\em Comput. Optim. Appl.}, 27(1):37--52, 2004.

\bibitem{Ohta2025}
S.~Ohta.
\newblock Discrete-time gradient flows for unbounded convex functions on
  {G}romov hyperbolic spaces.
\newblock {\em Commun. Contemp. Math.}
\newblock to appear.

\bibitem{OhtaPalfia2015}
S.~Ohta and M.~P\'{a}lfia.
\newblock Discrete-time gradient flows and law of large numbers in {A}lexandrov
  spaces.
\newblock {\em Calc. Var. Partial Differential Equations}, 54(2):1591--1610,
  2015.

\bibitem{Rockafellar}
R.~T. Rockafellar.
\newblock {\em Convex Analysis}.
\newblock Princeton University Press, Princeton, NJ, 1970.

\bibitem{Sakabe}
K.~Sakabe.
\newblock {N}esterov's accelerated gradient for unbounded convex functions
  finds the minimum-norm point in the dual space, 2026.
\newblock arXiv:2602.08618.

\bibitem{Sakabe2026supportfunctional}
K.~Sakabe, M.~L. Doğan, and M.~Walter.
\newblock Strassen's support functionals coincide with the quantum functionals,
  2026.
\newblock arXiv:2601.21553.

\bibitem{Sakai1996}
T.~Sakai.
\newblock {\em Riemannian Geometry}.
\newblock American Mathematical Society, Providence, RI, 1996.

\bibitem{SanzSernaZygalakis2020}
J.~M. Sanz~Serna and K.~C. Zygalakis.
\newblock Contractivity of {R}unge-{K}utta methods for convex gradient systems.
\newblock {\em SIAM J. Numer. Anal.}, 58(4):2079--2092, 2020.

\bibitem{Sinkhorn1964}
R.~Sinkhorn.
\newblock A relationship between arbitrary positive matrices and doubly
  stochastic matrices.
\newblock {\em Ann. Math. Statist.}, 35:876--879, 1964.

\bibitem{VanDooren1979}
P.~Van~Dooren.
\newblock The computation of {K}ronecker's canonical form of a singular pencil.
\newblock {\em Linear Algebra Appl.}, 27:103--140, 1979.

\bibitem{Vishnoi_Book}
N.~K. Vishnoi.
\newblock {\em Algorithms for Convex Optimization}.
\newblock Cambridge University Press, Cambridge, 2021.

\bibitem{Wallach_book}
N.~R. Wallach.
\newblock {\em Geometric {I}nvariant {T}heory}.
\newblock Springer, Cham, 2017.

\bibitem{Woodward2011moment}
C.~T. Woodward.
\newblock Moment maps and geometric invariant theory, 2011.
\newblock arXiv:0912.1132.

\end{thebibliography}

\end{document}